\DeclareMathOperator{\csch}{csch}
\DeclareMathOperator{\shch}{shch}
\numberwithin{equation}{section} 
\title{Sweeping preconditioners for the iterative solution of quasiperiodic Helmholtz transmission problems in layered media}
\author{David Nicholls, Carlos P\'erez-Arancibia, Catalin Turc}
\newtheorem{theorem}{Theorem}[section]
\newtheorem{lemma}[theorem]{Lemma}
\newtheorem{remark}[theorem]{Remark}
\newenvironment{proof}{\hspace{0.5cm} {\bf Proof.}}
{$\quad {}_\blacksquare$\vspace{0.3cm}}
\date{}
\newcommand{\triple}[1]{{\left\vert\kern-0.25ex\left\vert\kern-0.25ex\left\vert #1 
    \right\vert\kern-0.25ex\right\vert\kern-0.25ex\right\vert}}
\begin{document}
\maketitle
\begin{abstract}
  We present a sweeping preconditioner for quasi-optimal Domain Decomposition Methods (DDM) applied to Helmholtz transmission problems in periodic layered media. Quasi-optimal DD (QO DD) for Helmholtz equations rely on transmission operators that are approximations of Dirichlet-to-Neumann (DtN) operators. Employing shape perturbation series, we construct approximations of DtN operators corresponding to periodic domains, which we then use as transmission operators in a non-overlapping DD framework. The Robin-to-Robin (RtR) operators that are the building blocks of  DDM are expressed via robust boundary integral equation formulations. We use Nystr\"om discretizations of quasi-periodic boundary integral operators to construct high-order approximations of RtR. Based on the premise that the quasi-optimal transmission operators should act like perfect transparent boundary conditions, we construct an approximate LU factorization of the tridiagonal QO DD matrix associated with periodic layered media, which is then used as a double sweep preconditioner. We present a variety of numerical results that showcase the effectiveness of the sweeping preconditioners applied to QO DD for the iterative solution of Helmholtz transmission problems in periodic layered media.
   \newline \indent
  \textbf{Keywords}: Helmholtz transmission problems, domain decomposition methods, periodic layered media, sweeping preconditioners.\\
   
 \textbf{AMS subject classifications}: 
 65N38, 35J05, 65T40,65F08
\end{abstract}

\section{Introduction}
\label{intro}

The numerical simulation of interactions between electromagnetic, acoustic, and elastic waves with periodic layered media has numerous applications in the fields of optics, photonics, geophysics~\cite{Petit}. Given the important technological applications of periodic layered media, the simulation of wave propagation in such environments has attracted significant attention~\cite{cho2015robust,LaiKobayashiBarnett2015,nicholls2018stable,schadle2007domain,perez2018domain}. Regardless of the type of discretization (finite elements, finite differences, boundary integral operators), iterative solvers are the preferred method of solution especially for high-frequency layered configurations that involve large numbers of layers that may contain inclusions. The iterative solution  of high-frequency Helmholtz and Maxwell equations in complex media is a challenging computational problem~\cite{gander2016class}.  A successful strategy to tackle this problem relies on sweeping preconditioners~\cite{engquist2011sweeping}. We present in this paper a sweeping preconditioner for a Domain Decomposition formulation of Helmholtz transmission problems in two dimensional periodic layered  media.

Domain Decomposition Methods are natural candidates for the solution of Helmholtz transmission problems in periodic layered media~\cite{schadle2007domain,perez2018domain,nicholls2018stable}. Local subdomain solutions (the subdomains may or may not coincide with the periodic layers) are linked iteratively via Robin type transmission conditions defined on inter domain interfaces. Ideally, the transmission operators should act as transparent boundary conditions that allow information to flow out of each subdomain with very little information being reflected back. As such, for a given subdomain, optimal transmission operators on the subdomain interface consist of Dirichlet-to-Neumann (DtN) operators associated with the adjacent subdomain that shares the same interface. In practice, the transmission operators are constructed via various approximations of DtN operators that rely either on Fourier calculus~\cite{boubendirDDM,Gander1} or Perfectly Matched Layers~\cite{zepeda2016method,vion2014double}; the ensuing DD are referred to as Quasi-Optimal DD (QO DD) or Optimized Schwartz Methods~\cite{gander2016class}. 

The main scope of this paper is QO DD for the solution of Helmholtz transmission problems in periodic layered media separated by grating profiles (i.e. graphs of periodic functions). We present two strategies of subdomain partition whereby (1) the subdomains coincide with the layer subdomains and the subdomain interfaces coincide with the grating profiles of material discontinuity of the layered medium; and (2) the subdomains consist of horizontal strips whose flat boundaries do not intersect any of the grating profiles of material discontinuity. We note that the DD partition strategy (2) is only applicable to layered media configurations where the width of the layers is larger than the roughness of their interfaces. In each subdomain a local Helmholtz quasi-periodic equation with generalized Robin conditions must be solved (the wavenumber may be discontinuous in case (2)), and generalized Robin data on the subdomain boundaries are linked with those corresponding to the adjacent subdomain. The generalized Robin data corresponding to a given subdomain is defined in terms of transmission operators that are approximations of DtN operators corresponding to the adjacent subdomain. Such approximations of periodic DtN operators can be obtained via high-order shape perturbation/deformation series in case (1)~\cite{nicholls2004shape}. Specifically, using as a small parameter the roughness/elevation height of the grating, the periodic DtN operators are expressed as a perturbation series whose terms can be computed recursively. The zeroth order terms of the perturbation series coincide with DtN of layered domains with flat interfaces, which can be written explicitly in terms of Fourier multipliers. In the case of the subdomain partition (2), since the subdomain interfaces are flat,  the transmission operators are chosen to be the aforementioned Fourier multipliers. We establish that the ensuing QO DD corresponding to both subdomain partitions are equivalent to the original transmission problem, with the caveat that the roughness of the grating profiles must be small enough for the subdomain partition (1).

The exchange of Robin data amongst the subdomains in DD is realized via quasi-periodic Robin-to-Robin (RtR) maps that map incoming to outgoing  subdomain Robin data. Following the methodology introduced in~\cite{perez2018domain}, we express quasi-periodic RtR operators in terms of robust boundary integral equation formulations. The discretization of the RtR maps is realized by extending the high-order Nystr\"om method based on trigonometric interpolation and windowing quasi-periodic Green functions~\cite{perez2018domain} to the case of DtN transmission operators. Since the terms in the shape deformation series expansions of DtN operators are expressed in terms of Fourier multipliers~\cite{nicholls2004shape}, the discretization of the QO transmission operators is straightforward within the framework of trigonometric interpolation. Using Nystr\"om discretization RtR matrices, we discretize the QO DD formulation for layered transmission problems in the form a block tridiagonal matrix which we invert using Krylov subspace iterative methods. However, the numbers of iterations required for the solution of QO DD linear systems grows with the number of layers, especially for high frequencies at high-contrast layer media configurations. In order to alleviate this situation, we construct a double sweep preconditioner based on an approximate LU factorization of the block tridiagonal QO DD matrix. As it was very nicely explained in the recent contribution~\cite{gander2016class}, all of the effective sweeping preconditioners that have been introduced in the last decade~\cite{zepeda2016method,vion2014double,engquist2011sweeping} can be elegantly described in terms of optimized DD with layered subdomain partitions and approximate LU of the ensuing block tridiagonal DD matrices. The key insight in our construction of the LU factorization is related to the observation that if the transmission operators were to behave as perfect transparent boundary conditions, certain blocks in the QO DD matrix can be approximated by zero~\cite{vion2014double}. This approximation renders the LU factorization particularly simple as it bypasses altogether the need for inversions of block matrices.

We present a variety of numerical results that highlight the benefits of QO DD  formulations for the solution of transmission problems in periodic layered media, as well as the effectiveness of the sweeping preconditioners in the presence of large numbers of layers at high frequencies. With regards to the latter regime, we find that the sweeping preconditioners used in conjunction with QO DD with horizontal strip subdomain partitions is particularly effective. We mention that the quasi-optimal transmission operators based on Fourier square-root principal symbols approximations of DtN operators has been already used in several contributions~\cite{boubendirDDM,vion2014double,jerez2017multitrace}; we simply extend the square root Fourier calculus to the periodic setting and incorporate it within the high-order shape deformation expansions technology introduced in~~\cite{nicholls2004shape}. Furthermore, the construction of the sweeping preconditioners that we employ in this paper was originally introduced in~\cite{vion2014double} and further elaborated upon in~\cite{gander2016class}. The main contributions of this paper are (a) the integration of these two important ideas within a high-order Nystr\"om discretization of robust quasi-periodic boundary integral equation formulations of RtR maps, as well as (b) the analysis of the quasi-periodic QO DD. A fully parallel implementation of the sweeping preconditioner applied to QO DD for transmission Helmholtz equations in layered media is currently being developed. Also, the generalization of the DD with horizontal strip subdomain partitioning is currently under investigation; this would entail careful treatment of cross points (i.e. points on the subdomain boundaries where the wavenumbers are discontinuous), which we plan to pursue along the lines of the contribution~\cite{jerez2017multitrace}.

The paper is organized as follows: in Section~\ref{MS10} we present the formulation of Helmholtz transmission problems in periodic layered media; in Section~\ref{DDM} we present QO DD formulations of the periodic Helmholtz transmission problem; we continue in Section~\ref{trans_ops} with the construction of quasi-optimal transmission operators based on high-order shape perturbation series; we  show in Section~\ref{rtr} a means to express the QO DD RtR operators in terms of robust quasi-periodic boundary integral equation formulations, which, in turn, enable us to analyze the equivalence between the QO DD formulations and the original Helmholtz transmission problems; and we conclude in Section~\ref{num} with the construction of the sweeping preconditioner and with a presentation of a variety of numerical results that illustrate the effectiveness of these preconditioners in the context considered in this paper. 

\parskip 2pt plus2pt minus1pt

\section{Scalar transmission problems \label{MS10}}

We consider the problem of two dimensional quasi-periodic scattering by penetrable homogeneous periodic layers. We assume that the layers are given by $\Omega_j=\{(x_1,x_2)\in\mathbb{R}^2: \overline{F_j}+F_{j}(x_1)\leq x_2\leq \overline{F_{j-1}}+ F_{j-1}(x_1)\}$ for $0<j<N$ and $\Omega_0=\{(x_1,x_2)\in\mathbb{R}^2:\overline{F_0}+F_0(x_1)\leq x_2\}$ and $\Omega_{N+1}=\{(x_1,x_2)\in\mathbb{R}^2:x_2\leq \overline{F_N}+F_N(x_1)\}$, and all the functions $F_j$ are periodic with principal period $d$, that is $F_j(x_1+d)=F_j(x_1)$ for all $0\leq j\leq N$, and $\overline{F_j}\in\mathbb{R}, 0\leq j\leq N$. We assume that the medium occupying the layer $\Omega_j$ is homogeneous and its permitivity is $\epsilon_j$; the wavenumber $k_j$ in the layer $\Omega_j$ is given by $k_j=\omega\sqrt{\epsilon_j}$. We assume that a plane wave $u^{inc}(\mathbf{x})=\exp(i(\alpha x_1+i\beta x_2))$ where $\alpha^2+\beta^2=k_0^2$ impinges on the layered structure, and we are interested in looking for $\alpha$-quasi-periodic fields  $u_j$ that satisfy the following system of equations:
\begin{equation}\label{system_t}
\begin{array}{rclll}
 \Delta u_j +k_j^2 u_j &=&0& {\rm in}& \Omega_j^{per}:=\{(x_1,x_2)\in\Omega_j: 0\leq x_1\leq d\},\smallskip\\
  u_j+\delta_0u^{inc}&=&u_{j+1}&{\rm on}& \Gamma_j=\{(x_1,x_2):0\leq x_1\leq d,\ x_2=\overline{F_j}+F_j(x_1)\},\smallskip\\
  \gamma_j(\partial_{\nu_j}u_j+\delta_0\partial_{\nu_j}u^{inc})&=&-\gamma_{j+1}\partial_{\nu_{j+1}}u_{j+1}&{\rm on}& \Gamma_j,\smallskip
\end{array}\end{equation}
where $\nu_j$ denote the unit normals to the boundary $\partial\Omega_j$ pointing to the exterior of the subdomain $\Omega_j$ (i.e. for the domain $\Omega_0$ we define $\nu_0(x_1)=(F'_{0}(x_1),-1)/(1+(F'_{0}(x_1))^2)^{1/2}$ on $\Gamma_{0}$, for the domains $\Omega_j,\ 0<j<N$ we define $\nu_j(x_1)=(-F'_{j-1}(x_1),1)/(1+(F'_{j-1}(x_1))^2)^{1/2}$ on $\Gamma_{j-1}$ and $\nu_j(x_1)=(F'_{j}(x_1),-1)/(1+(F'_{j}(x_1))^2)^{1/2}$ on $\Gamma_{j}$, and finally, for the domain $\Omega_N$ we define $\nu_N(x_1)=(-F'_{N}(x_1),1)/(1+(F'_{N}(x_1))^2)^{1/2}$ on $\Gamma_{N}$) and $\delta_0$ is the Dirac distribution supported on $\Gamma_0$. We also denote by $n_0(x_1)=(F'_{0}(x_1),-1)$ on $\Gamma_{0}$ for the domain $\Omega_0$, for the domains $\Omega_j,\ 0<j<N$, $n_j(x_1)=(-F'_{j-1}(x_1),1)$ on $\Gamma_{j-1}$ and $n_j(x_1)=(F'_{j}(x_1),-1)$ on $\Gamma_{j}$, and finally, for the domain $\Omega_N$, $n_N(x_1)=(-F'_{N}(x_1),1)$ on $\Gamma_{N}$; we point out that $\nu_j=n_j/|n_j|$ for all domains $\Omega_j,\ 0\leq j\leq N$. We note that with this convention on unit normals we have that $\nu_j=-\nu_{j+1}$ as well as $n_j=-n_{j+1}$ on $\Gamma_j$. We also assume that $u_0$ and $u_{N}$ in equations~\eqref{system_t} are radiative in $\Omega_0$ and $\Omega_{N+1}$ respectively. The latter requirement amounts to expressing the solutions $u_0$ and $u_{N+1}$ in terms of Rayleigh series
\begin{equation}\label{eq:rad_up}
  u_0(x_1,x_2)=\sum_{r\in\mathbb{Z}} B_r^{+}e^{i\alpha_r x_1+i\beta_{0,r} x_2},\quad x_2>\overline{F_0}+\max{F_0}
\end{equation}
and
\begin{equation}\label{eq:rad_down}
  u_{N+1}(x_1,x_2)=\sum_{r\in\mathbb{Z}} B_r^{-}e^{i\alpha_r x_1-i\beta_{N+1,r} x_2},\quad x_2<\overline{F_N}+\min{F_N}
\end{equation}
where $\alpha_r=\alpha+\frac{2\pi}{d}r$, and $\beta_{0,r}=(k_0^2-\alpha_r^2)^{1/2}$, $\beta_{N+1,r}=(k_{N+1}^2-\alpha_r^2)^{1/2}$ where the branches of the square roots in the definition of $\beta_{0,r}$ and $\beta_{N+1,r}$ in such a way that $\sqrt{1}=1$, and that the branch cut coincides with the negative imaginary axis. We assume that the wavenumbers $k_j$ and the quantities $\gamma_j$ in the subdomains $\Omega_j$ are positive real numbers. In electromagnetic applications, $\gamma_j=1$ or $\gamma_j=\epsilon_j^{-1}$ depending whether the incident radiation is TE or TM. For the sake of simplicity,  we consider in this contribution the case $\gamma_j=1$; extensions to general positive $\gamma_j$ are straightforward.
\begin{figure}
\centering
\includegraphics[scale=1]{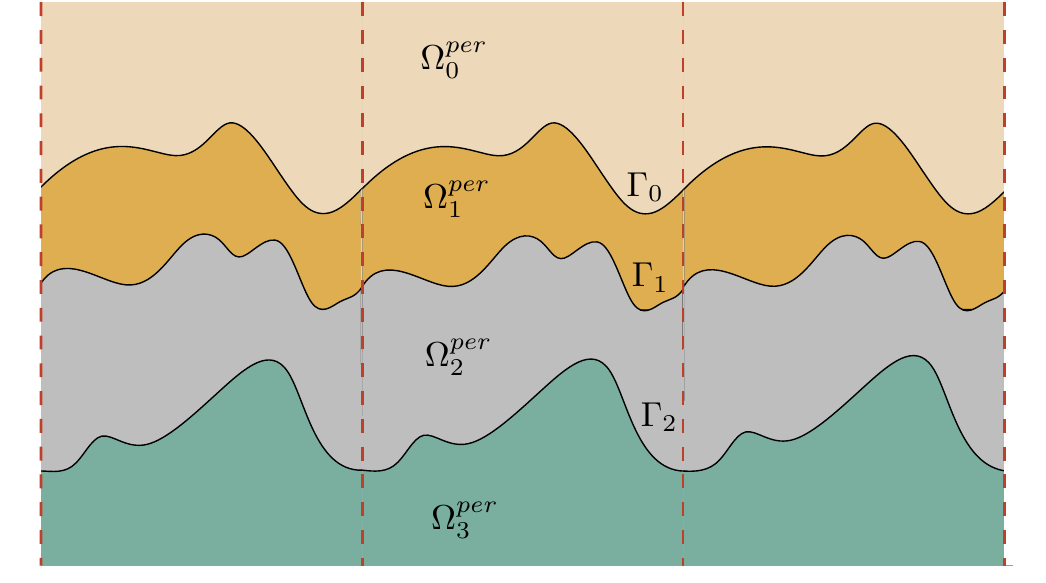}
\caption{Typical periodic layer structure with $N=3$.}
\label{fig:subdiv1}
\end{figure}

 \section{Domain decomposition approach\label{DDM}}

 The transmission problem~\eqref{system_t} can be formulated via boundary integral equations (BIEs)~\cite{arens2010scattering,cho2015robust} or via non-overlapping Domain Decomposition (DD)~\cite{perez2018domain,nicholls2018stable}.  Upon discretization, both the BIE and DD amount to solving block tridiagonal linear systems. In the case of large numbers of layers, the ensuing (large) linear systems are solved via direct methods~\cite{cho2015robust,perez2018domain} that rely on Schur complements. As such, the applicability of direct solvers for the numerical solution of the transmission problem~\eqref{system_t} is limited by the size of the Schur complements. Iterative solvers, on the other hand, do not suffer from the aforementioned size limitations, yet are challenged by the presence of significant multiple scattering, especially in high-contrast multi-layer configurations at high frequencies. In the high-frequency regime, relevant to technological applications, efficient preconditioners are needed in order to alleviate multiple scattering. The main scope of this contribution is to present such a preconditioner (referred to as the sweeping preconditioner~\cite{zepeda2016method,vion2014double,engquist2011sweeping}) in the context of DD formulation of quasiperiodic transmission problems.
 
 The main idea of DD is to divide the computational domain into subdomains, and to match subdomain quasiperiodic solutions of Helmholtz equations via Robin type transmission conditions on the subdomain interfaces. We consider in what follows two strategies of partitioning the computational domain into non-overlapping subdomains: the most natural one in which the DD subdomains coincide with the layer domains $\Omega_j^{per}$, and an alternative one in which the subdomains are horizontal strips. We present in what follows the details of the first subdomain partitioning strategy mentioned above.   

 \subsection{DD with subdomains $\Omega_j^{per}$}
 
 A natural non-overlapping domain decomposition approach for the solution of equations~\eqref{system_t} consists of solving subdomain problems in $\Omega_j^{per},j=0,\ldots,N+1$ with matching Robin transmission boundary conditions on the common subdomain interfaces $\Gamma_j$ for $j=0,\ldots,N$. Indeed, this procedure amounts to computing $\alpha$-quasiperiodic  subdomain solutions:
\begin{eqnarray}\label{DDM_t}
  \Delta u_j +k_j^2 u_j &=&0\qquad {\rm in}\quad \Omega_j^{per},\\
  (\partial_{n_0}u_0+\partial_{n_0}u^{inc})+Z_{1,0}(u_0+u^{inc})&=&-\partial_{n_{1}}u_{1}+Z_{1,0}u_{1}\ {\rm on}\ \Sigma_{0,1}:=\Gamma_{0}\nonumber\\
  \partial_{n_{1}}u_{1}+Z_{0,1}u_{1}&=&-(\partial_{n_0}u_0+\partial_{n_0}u^{inc})+Z_{0,1}(u_0+u^{inc})\ {\rm on}\ \Sigma_{1,0}:=\Gamma_{0}\nonumber\\
  \partial_{n_j}u_j+Z_{j+1,j}u_j&=&-\partial_{n_{j+1}}u_{j+1}+Z_{j+1,j}u_{j+1}\ {\rm on}\ \Sigma_{j,j+1}:=\Gamma_{j},\ 1\leq j\leq N\nonumber\\
  \partial_{n_{j+1}}u_{j+1}+Z_{j,j+1}u_{j+1}&=&-\partial_{n_j}u_j +Z_{j,j+1}u_j\ {\rm on}\ \Sigma_{j+1,j}:=\Gamma_{j}, 1\leq j\leq N.\nonumber
\end{eqnarray}
where $Z_{j+1,j}:H^{1/2}(\Sigma_{j,j+1})\to H^{-1/2}(\Sigma_{j,j+1})$, $Z_{j,j+1}:H^{1/2}(\Sigma_{j+1,j})\to H^{-1/2}(\Sigma_{j+1,j})$ are certain transmission operators for $0\leq j\leq N$, and $\partial_{n_j}$ denote normal derivatives with respect to the non-unit normals $n_j$. In addition, we require that $u_0$ and $u_{N+1}$ are radiative. We have chosen to double index the interfaces between layer subdomains: the first index $j$ refers to the index of the layer $\Omega_j$, whereas the second index $\ell$ denotes the index of the layer $\Omega_\ell$ adjacent to the layer $\Omega_j$ so that $\Sigma_{j,\ell}$ is the interface between $\Omega_j$ and $\Omega_\ell$. Here and in what follows $H^s(\Gamma)$ denote Sobolev spaces of $\alpha$-quasiperiodic functions/distributions defined on the periodic interface $\Gamma$; the definition of these spaces is given in terms of Fourier series~\cite{perez2018domain}.

Heuristically, in order to give rise to rapidly convergent iterative DD, the transmission operators $Z_{j+1,j}$ ought to be good approximations of the restriction to $\Sigma_{j+1,j}=\Sigma_{j,j+1}$ of the DtN operator associated with the $\alpha$-quasiperiodic Helmholtz equation in the domain $\Omega_{j+1}$ with wavenumber $k_{j+1}$. This requirement explains why the indices are reversed in the definition of the transmission operators. In addition, the transmission operators $Z_{j+1,j}$ and $Z_{j,j+1}$ ought to be selected to meet the following two criteria: (1) the subdomain boundary value problems that incorporate these transmission operators in the form of generalized Robin boundary conditions are well-posed for all frequencies, and (2) the DD matching of the generalized Robin data on the interfaces of material discontinuity (which coincide with the layer boundaries) is equivalent to the original transmission conditions~\eqref{system_t} on the same interfaces. 

Specifically, with regards to the issue (1) above, we require that for a given layer domain $\Omega_j$ with $1\leq j\leq N$, the following $\alpha$-quasiperiodic boundary value problem is well-posed:
\begin{eqnarray}\label{eq:H}
  \Delta w_j+k_j^2w_j&=&0\quad{\rm in}\ \Omega_j^{per}\\
  \partial_{n_j}w_j+Z_{j-1,j}w_j&=&g_{j,j-1}\quad{\rm on}\ \Sigma_{j,j-1}\nonumber\\
  \partial_{n_j}w_j+Z_{j+1,j}w_j&=&g_{j,j+1}\quad{\rm on}\ \Sigma_{j,j+1}\nonumber
  \end{eqnarray}
where $g_{j,j-1}$ and $g_{j,j+1}$ are generic $\alpha$-quasiperiodic functions defined on $\Sigma_{j,j-1}$ and $\Sigma_{j,j+1}$ respectively. The following coercivity properties
\begin{equation}\label{eq:well-pos_t}
  \Im \langle Z_{j-1,j}\varphi_{j,j-1},\ \varphi_{j,j-1}\rangle<0\quad\mbox{and}\quad \Im \langle Z_{j+1,j}\varphi_{j,j+1},\ \varphi_{j,j+1}\rangle<0,
\end{equation}
for all $\varphi_{j,j-1}\in H^{1/2}(\Sigma_{j,j-1}),\ \varphi_{j,j+1}\in H^{1/2}(\Sigma_{j,j+1})$ in terms of the $H^{1/2}$ and $H^{-1/2}$ duality pairings $\langle\cdot,\cdot\rangle$ are sufficient conditions for guaranteeing the well posedness of the boundary value problems~\eqref{eq:H}. Indeed, this can be established easily by an application of the Green's identities in the domain $\Omega_j^{per}$. In the case of the semi-infinite domain $\Omega_0$, we require that the following $\alpha$-quasiperiodic boundary value problem is well-posed:
\begin{eqnarray}\label{eq:H0}
  \Delta w_0+k_0^2w_0&=&0\quad{\rm in}\ \Omega_0^{per}\\
  \partial_{n_0}w_0+Z_{1,0}\ w_0 &=&g_{0,1}\quad{\rm on}\ \Sigma_{0,1}\nonumber
\end{eqnarray}
where $g_{0,1}$ is a $\alpha$-quasiperiodic function defined on $\Sigma_{0,1}$. The coercivity property
\begin{equation}\label{eq:well-pos_0}
  \Im \langle Z_{1,0}\varphi_{0,1},\ \varphi_{0,1}\rangle <0,\quad{\rm for\ all}\ \varphi_{0,1}\in H^{1/2}(\Sigma_{0,1}),
\end{equation}
suffices to establish the well posedness of the boundary value~\eqref{eq:H0}. The latter fact can be established via the same arguments as those in Theorem 3.1 in~\cite{perez2018domain}. A similar coercivity condition imposed on the operator $Z_{N,N+1}$ ensures the well posedness of the analogous $\alpha$-quasiperiodic boundary value problem on the semi-infinite domain $\Omega_{N+1}$.

Returning to the requirement (2) above, we ask that the DD matching of the generalized Robin data
\begin{eqnarray*}
 \partial_{n_j}u_j+Z_{j+1,j}u_j&=&-\partial_{n_{j+1}}u_{j+1}+Z_{j+1,j}u_{j+1}\quad{\rm on}\quad \Sigma_{j,j+1},\ 1\leq j\leq N\nonumber\\
  \partial_{n_{j+1}}u_{j+1}+Z_{j,j+1}u_{j+1}&=&-\partial_{n_j}u_j +Z_{j,j+1}u_j\quad{\rm on}\quad \Sigma_{j+1,j}, 1\leq j\leq N
\end{eqnarray*}
is equivalent to the continuity conditions
\[
u_j = u_{j+1}\qquad{\rm and}\qquad \partial_{n_j}u_j=-\partial_{n_{j+1}}u_{j+1}\quad{\rm on}\quad \Gamma_{j}=\Sigma_{j,j+1}=\Sigma_{j+1,j}, 1\leq j\leq N.
\]
It can be immediately seen that the equivalence in part (2) is guaranteed provided that $Z_{j,j+1}+Z_{j+1,j}:H^{1/2}(\Gamma_j)\to H^{-1/2}(\Gamma_j)$ is an injective operator. Under the assumption that the coercivity properties~\eqref{eq:well-pos_0} hold, it follows that
\[
\Im \langle(Z_{j,j+1}+Z_{j+1,j})\varphi,\ \varphi\rangle<0,\quad{\rm for\ all}\ \varphi\in H^{1/2}(\Gamma_j),
\]
and thus the operators $Z_{j,j+1}+Z_{j+1,j}$ are injective for all $1\leq j\leq N$. Thus, the coercivity properties~\eqref{eq:well-pos_0} ensure that both requirements (1) and (2) above are met. We postpone the discussion on the selection of the transmission operators $Z_{j,j+1}$ and $Z_{j+1,j}$ and we formulate the DD system~\eqref{DDM_t} in matrix operator form. To that end, we define certain RtR operators associated with the boundary value problems~\eqref{eq:H}.  Specifically, we define the RtR map $\mathcal{S}^j$ in the following manner:
\begin{equation}\label{RtRboxj_t}
   \mathcal{S}^j\begin{bmatrix}g_{j,j-1}\\g_{j,j+1}\end{bmatrix}=\begin{bmatrix}\mathcal{S}^j_{j-1,j-1} & \mathcal{S}^j_{j-1,j+1}\\ \mathcal{S}^j_{j+1,j-1} & \mathcal{S}^j_{j+1,j+1}\end{bmatrix}\begin{bmatrix}g_{j,j-1}\\g_{j,j+1}\end{bmatrix}:=\begin{bmatrix}(\partial_{n_j}w_j-Z_{j,j-1}w_j)|_{\Sigma_{j,j-1}}\\(\partial_{n_j}w_j-Z_{j,j+1}w_j)|_{\Sigma_{j,j+1}}\end{bmatrix}.
 \end{equation}
Also, associated with the boundary value problem~\eqref{eq:H0} posed in the semi-infinite domain $\Omega_0$ we define the RtR map $\mathcal{S}^0$ in the form
\begin{equation}\label{eq:S0}
  \mathcal{S}^0_{1,1}g_{0,1}:=(\partial_{n_0}w_0-Z_{0,1}w_0)|_{\Sigma_{0,1}}.
\end{equation}
The RtR map $\mathcal{S}^{N+1}_{N,N}$ corresponding to the domain $\Omega_{N+1}$ is defined in a similar manner to $\mathcal{S}^0_{1,1}$ but  for a boundary data $g_{N+1,N}$ defined on $\Sigma_{N+1,N}$. 

With these notations in place, the DD formulation~\eqref{DDM_t} seeks to find the generalized Robin data associated with each interface $\Gamma_j=\Sigma_{j,j+1}=\Sigma_{j+1,j}$
\begin{eqnarray*}
  f_{j}=\begin{bmatrix}f_{j,j+1}\\ f_{j+1,j}\end{bmatrix}&:=&\begin{bmatrix}(\partial_{n_j}u_j+Z_{j+1,j}u_j)|_{\Sigma_{j,j+1}}\\ (\partial_{n_{j+1}}u_{j+1}+Z_{j,j+1} u_{j+1})|_{\Sigma_{j+1,j}}\end{bmatrix},\ 0\leq j\leq N\\
\end{eqnarray*}
as the solution of the following $(2N+2)\times (2N+2)$ operator linear system
\begin{equation}\label{ddm_t_exp}
  \mathcal{A}f=b
\end{equation}
where $f=[f_0\ f_1\ \ldots f_{N}]^\top$ and the right-hand-side vector $b=[b_0\ b_1\ \ldots\ b_{N}]^\top$ has zero components $b_\ell=[0\ 0]^\top,\ 1\leq \ell\leq N$ with the exception of the first component 
 \begin{equation}\label{rhs_ddm_t}
   b_0=\begin{bmatrix}-(\partial_{n_0}u^{inc}+Z_{1,0}\ u^{inc})|_{\Sigma_{0,1}}\\-(\partial_{n_0}u^{inc}-Z_{0,1} u^{inc})|_{\Sigma_{1,0}}\end{bmatrix}\nonumber.
 \end{equation} 
and the DD matrix $\mathcal{A}$ is a tridiagonal block matrix given in explicit form by
\begin{equation}\label{eq:mA}
\mathcal{A}=\begin{bmatrix}
D_0 & U_0 & 0 &\ldots & 0\\
L_0 & D_1 & U_1 & \ldots & 0\\
\ldots & \ldots & \ldots & \ldots & \ldots\\
\ldots &L_{j-1} & D_j & U_j & \ldots\\
\ldots & \ldots & \ldots & \ldots & \ldots\\
\ldots & \ldots & L_{N-2} & D_{N-1} & U_{N-1}\\
\ldots & \ldots & \ldots & L_{N-1} & D_N\\
\end{bmatrix}
\end{equation}
where
\begin{equation}\label{eq:components}
D_j:=\begin{bmatrix} I & \mathcal{S}^{j+1}_{j,j} \\ \mathcal{S}^j_{j+1,j+1}& I \end{bmatrix}\quad U_j=\begin{bmatrix} \mathcal{S}^{j+1}_{j,j+2} & 0 \\ 0 & 0\end{bmatrix}\quad L_j=\begin{bmatrix}0 & 0 \\ 0 & \mathcal{S}^{j+1}_{j+2,j} \end{bmatrix}.
\end{equation}
We present in what follows a different strategy of domain decomposition whereby the subdomains are horizontal strips.

\subsection{DD with stripes subdomains}

An alternative DD possibility is to partition the computational domain using horizontal stripes. We restrict ourselves to cases where the layer domains $\Omega_j^{per},\ 1\leq j\leq N$ are wide enough so that each periodic interface $\Gamma_j,\ 0\leq j\leq N$ can be contained in a horizontal strip that does not intersect any other interface $\Gamma_\ell,\ \ell\neq j$. Under this assumption, these horizontal stripes constitute the DD subdomains---see Figure~\ref{fig:new_ddm} for a depiction of the partitioning in the case of three layers (i.e. $N=1$). In general, however, a domain decomposition into horizontal stripes might require that an interface $\Gamma_j$ intersect a (flat) boundary of a strip; we leave this challenging scenario for future considerations.

Assuming that there exist real numbers $c_0>c_1>\ldots >c_{N+1}$ such that for all $0\leq j\leq N$ we have that $c_j>\overline{F}_j+\max F_j(x_1)$ and $c_{j+1}< \overline{F}_j+\min F_j(x_1)$, then  we can partition $\mathbb{R}^2$ into a union of nonoverlapping horizontal strips $\mathbb{R}^2=\cup_{j=0}^{N+2} \Omega_j^\flat$, where the strip domains are defined as $\Omega_0^\flat:=\{(x_1,x_2): x_2\geq c_0\}$, $\Omega_j^\flat:=\{(x_1,x_2): c_j\leq x_2\leq c_{j-1}\},\ 1\leq j\leq N+1$, and $\Omega_{N+2}^\flat:=\{(x_1,x_2): x_2\leq c_{N+1}\}$. Using the domain decomposition into layered stripes we seek $\alpha$-quasiperiodic solutions $v_j$ of the following system of PDEs 
\begin{eqnarray}\label{DDM_t_final}
  \Delta v_j +k_j(x)^2 v_j &=&0\qquad {\rm in}\quad \Omega_j^{\flat,per},\\
  k_0(x):=k_0,\ k_j(x)&:=&\begin{cases}k_{j-1},\ x_2>\overline{F}_{j-1}+F_{j-1}(x_1)\\ k_j,\ x_2<\overline{F}_{j-1}+F_{j-1}(x_1)\end{cases},\ 1\leq j\leq N+1\nonumber\\
  \left[v_j\right]=0,\quad \left[\partial_{n_j}v_j\right]&=&0\quad{\rm on}\ \Gamma_{j-1}\nonumber\\
  -(\partial_{x_2}v_0+\partial_{x_2}u^{inc})+Z_{1,0}^\flat(v_0+u^{inc})&=&-\partial_{x_2}v_{1}+Z_{1,0}^\flat v_{1}\ {\rm on}\ \Sigma_{0,1}^\flat\nonumber\\
  \partial_{x_2}v_{1}+Z_{0,1}^\flat v_{1}&=&(\partial_{x_2}v_0+\partial_{x_2}u^{inc})+Z_{0,1}^\flat(v_0+u^{inc})\ {\rm on}\ \Sigma_{1,0}^\flat\nonumber\\
  -\partial_{x_2}v_j+Z_{j+1,j}^\flat v_j&=&-\partial_{x_2}v_{j+1}+Z_{j+1,j}^\flat v_{j+1}\ {\rm on}\ \Sigma^\flat_{j,j+1},\ 1\leq j\leq N+1\nonumber\\
  \partial_{x_2}v_{j+1}+Z_{j,j+1}^\flat v_{j+1}&=&\partial_{x_2}v_j +Z^\flat _{j,j+1}v_j\ {\rm on}\ \Sigma^\flat_{j+1,j}, 1\leq j\leq N+1,\nonumber
\end{eqnarray}
where $\Sigma^\flat_{j,j+1}=\Sigma_{j+1,j}^\flat:=\{(x_1,c_j), 0\leq x_1\leq d\},\ 0\leq j\leq N+1$ and $\left[v_j\right]$ denotes the jump of the function $v_j$ across the interface $\Gamma_{j-1}$. We require that the transmission operators have the following mapping properties $Z_{j+1,j}^\flat:H^{1/2}(\Sigma^\flat_{j,j+1})\to H^{-1/2}(\Sigma^\flat_{j,j+1})$ and $Z_{j+1,j}^\flat:H^{1/2}(\Sigma^\flat_{j+1,j})\to H^{-1/2}(\Sigma^\flat_{j,j+1})$ and satisfy coercivity properties similar to those in equations~\eqref{eq:well-pos_t}. 
\begin{figure}
\centering
\includegraphics[scale=1]{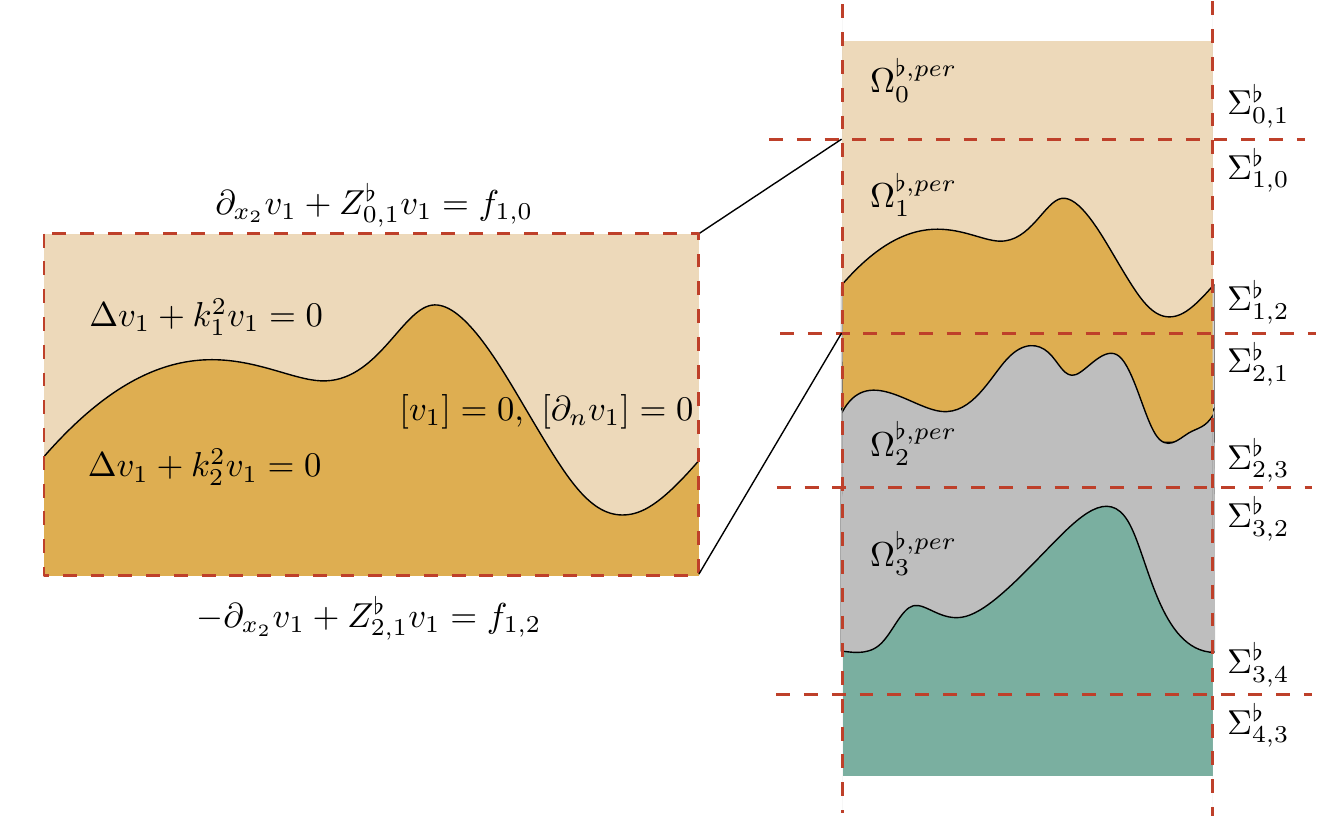}
\caption{New domain decomposition.}
\label{fig:new_ddm}
\end{figure}

The coercivity properties of the transmission operators $Z_{j-1,j}^\flat$ and $Z_{j+1,j}^\flat$ are needed to ensure the well-posedness of the  following subdomain PDEs
\begin{eqnarray}\label{DDM_t_final_well_p}
  \Delta v_j +k_j(x)^2 v_j &=&0\qquad {\rm in}\quad \Omega_j^{\flat,per},\\
  k_0(x):=k_0, k_j(x)&:=&\begin{cases}k_{j-1},\ x_2>\overline{F}_{j-1}+F_{j-1}(x_1)\\ k_j,\ x_2<\overline{F}_{j-1}+F_{j-1}(x_1)\end{cases},\nonumber\\
  \left[v_j\right]=0,\quad \left[\partial_{n_j}v_j\right]&=&0\quad{\rm on}\ \Gamma_{j-1}\nonumber\\
  \partial_{x_2}v_{j}+Z_{j-1,j}^\flat v_{j}&=&g^\flat_{j,j-1}\ {\rm on}\ \Sigma^\flat_{j,j-1},\nonumber\\
  -\partial_{x_2}v_{j}+Z_{j+1,j}^\flat v_{j}&=&g^\flat_{j,j+1}\ {\rm on}\ \Sigma^\flat_{j+1,j},\nonumber
\end{eqnarray}
for all $1\leq j\leq N+1$ as well as those posed in the semi-infinite domains $\Omega_0^{\flat,per}$ and $\Omega_{N+1}^{\flat,per}$ respectively. Associated to the Helmholtz transmission problem~\eqref{DDM_t_final_well_p} is the RtR operator defined below
\begin{equation}\label{RtRboxj_t_flat}
   \mathcal{S}^{\flat,j}\begin{bmatrix}g^\flat_{j,j-1}\\g^\flat_{j,j+1}\end{bmatrix}=\begin{bmatrix}\mathcal{S}^{\flat,j}_{j-1,j-1} & \mathcal{S}^{\flat,j}_{j-1,j+1}\\ \mathcal{S}^{\flat,j}_{j+1,j-1} & \mathcal{S}^{\flat,j}_{j+1,j+1}\end{bmatrix}\begin{bmatrix}g^\flat_{j,j-1}\\g^\flat_{j,j+1}\end{bmatrix}:=\begin{bmatrix}(\partial_{x_2}v_j-Z^\flat_{j,j-1}v_j)|_{\Sigma^\flat_{j,j-1}}\\(-\partial_{x_2}v_j-Z^\flat_{j,j+1}v_j)|_{\Sigma^\flat_{j,j+1}}\end{bmatrix}.
\end{equation}
The DD formulation~\eqref{DDM_t_final} then seeks to find the generalized Robin data associated with each interface $\Sigma^\flat_{j,j+1}=\Sigma^\flat_{j+1,j}$
\begin{eqnarray*}
  f_{j}^\flat=\begin{bmatrix}f^\flat_{j,j+1}\\ f^\flat_{j+1,j}\end{bmatrix}&:=&\begin{bmatrix}(-\partial_{x_2}v_j+Z^\flat_{j+1,j}v_j)|_{\Sigma_{j,j+1}}\\ (\partial_{x_2}v_{j+1}+Z^\flat_{j,j+1} v_{j+1})|_{\Sigma_{j+1,j}}\end{bmatrix},\ 0\leq j\leq N+1,\\
\end{eqnarray*}
as the solution of the following $(2N+4)\times (2N+4)$ operator linear system
\begin{equation}\label{ddm_t_exp_final}
  \mathcal{A}^\flat f^\flat=b^\flat
\end{equation}
where the DD matrix $\mathcal{A}^\flat$ is similar to that defined in equation~\eqref{eq:mA}, $f^\flat=[f_0^\flat\ f_1^\flat\ \ldots f_{N+1}^\flat]^\top$ and the right-hand-side vector $b^\flat=[b_0^\flat\ b_1^\flat\ \ldots\ b_{N+1}^\flat]^\top$ has zero components $b_\ell^\flat=[0\ 0]^\top,\ 1\leq \ell\leq N+1$ with the exception of the first component 
 \begin{equation}\label{rhs_ddm_t}
   b_0^\flat=\begin{bmatrix}(\partial_{x_2}u^{inc}-Z^\flat_{1,0}\ u^{inc})|_{\Sigma_{0,1}}\\(\partial_{x_2}u^{inc}+Z^\flat_{0,1} u^{inc})|_{\Sigma_{1,0}}\end{bmatrix}\nonumber.
 \end{equation} 
 Having described two possible DD strategies for the solution of quasi-periodic Helmholtz transmission problems~\eqref{system_t}, we present next a methodology based on Fourier calculus to construct quasi-optimal transmission operators.

\section{Construction of quasi-optimal transmission operators based on shape perturbation series\label{trans_ops}}

We present in what follows a perturbative method to construct quasi-optimal transmission operators $Z_{j,j+1}$ and $Z_{j+1,j}$ for $0\leq j\leq N$ corresponding to the DD formulation~\eqref{DDM_t}.  To this end, given a generic $d$-periodic profile function $F(x_1)$ we define the periodic interface $\Gamma:=\{(x_1,F(x_1)),\ 0\leq x_1\leq d\}$ and the semi-infinite domains $\Omega^{+,per}:=\{(x_1,x_2), 0\leq x_1\leq d,\ F(x_1)\leq x_2\}$ and respectively $\Omega^{-,per}:=\{(x_1,x_2), 0\leq x_1\leq d,\ F(x_1)\geq x_2\}$. We assume that the profile function $F(x_1)$ can be expressed in the form $F(x_1)=\varepsilon \widetilde{F}(x_1)$, where the $d$-periodic function $\widetilde{F}(x_1)$ is analytic (it actually suffices that the profile function is Lipschitz). We employ a perturbative approach~\cite{nicholls2004shape} to construct approximations of the DtN operator $Y^{\pm}(k,F)g:=\pm\partial_{n}v|_{\Gamma}$ corresponding to the following boundary value problem in the domains $\Omega^{\pm,per}$:
\begin{eqnarray}\label{eq:H00}
  \Delta v^{\pm}+k^2v^{\pm}&=&0\quad{\rm in}\ \Omega^{\pm,per}\\
   v^{\pm} &=&g\quad{\rm on}\ \Gamma\nonumber
\end{eqnarray}
where $v^{\pm}$ are radiative in the domains $\Omega^{\pm,per}$ and $g$ is a $\alpha$-quasiperiodic function defined on $\Gamma$, and $n(x)=(F'(x),-1)$ is the normal to $\Gamma$ pointing into the domain $\Omega^{-,per}$. Under the assumptions above, the DtN operators $Y^\pm(k,F)$ are analytic in the shape perturbation variable $\varepsilon$, and thus we seek the operator $Y^{\pm}(k,F)$ in the form of the perturbation series
\begin{equation}\label{eq:pert_series_0}
Y^{\pm}(k,F)=\sum_{n=0}^\infty Y_{n}^{\pm}(k,\widetilde{F})\varepsilon^n
\end{equation}
where the operators $Y_{n}^{\pm}(k,\widetilde{F}):H^{1/2}(\Gamma)\to H^{-1/2}(\Gamma)$ can be computed via explicit recursive formulas~\cite{nicholls2004shape}. Let us denote by $\rho(k,F)$ the radius of convergence of the perturbation series~\eqref{eq:pert_series_0}. Following~\cite{nicholls2004shape}, we present next the recursive formulas that lead to closed form expressions of the operators $Y_{n}^\pm(k,\widetilde{F})$. First, given an $\alpha$-quasiperiodic function $\varphi\in H^{1/2}(\Gamma)$ which can be represented as
\[
\varphi(x_1)=\sum_{n\in\mathbb{Z}}\varphi_{n}e^{i\alpha_n x_1}
\]
we define the Fourier multiplier operator
\begin{equation}\label{eq:Z_01}
  \beta_D(k)[\varphi](x_1):=\sum_{n\in\mathbb{Z}}\beta_{k,n}\varphi_{n}e^{i\alpha_n x_1},\ \beta_{k,n}:=(k^2-\alpha_n^2)^{1/2}.
\end{equation}
Then, it can be shown that the operators $Y_{n}^+(k,F)$ in the perturbation series~\eqref{eq:pert_series_0} can be computed via the recursion
\begin{eqnarray}\label{eq:recursive_1}
  Y_{0}^\pm(k,\widetilde{F})[\varphi]&=&(-i\beta_D(k))[\varphi]\nonumber\\
  Y_{n}^\pm(k,\widetilde{F})[\varphi_0]&=&\pm k^2\widetilde{F}_{n}(x_1)(\pm i\beta_D(k))^{n-1}\varphi\pm\partial_{x_1}\left[\widetilde{F}_{n}(x_1)\partial_{x_1}(\pm i\beta_D(k))^{n-1}\varphi\right]\nonumber\\
  &-&\sum_{m=0}^{n-1}Y_{m}^\pm(k,\widetilde{F})\left[\widetilde{F}_{n-m}(\pm i\beta_{D}(k))^{n-m}\varphi\right]
  \end{eqnarray}
where $\widetilde{F}_{\ell}(x_1):=\frac{\widetilde{F}(x_1)^\ell}{\ell !}$.  We note that $Y_{n}^\pm(k,\widetilde{F}):H^{1/2}(\Gamma)\to H^{-1/2}(\Gamma)$ for all  $0\leq n$, which is to say that none of the higher-order terms $Y_{n}^\pm(k,\widetilde{F}),\ 1\leq n$ is more regular that the lowest term $Y_{0}^\pm(k,\widetilde{F})$. Furthermore, since the recursions~\eqref{eq:recursive_1} may lead to significant subtractive cancellations, more stable expressions of the operators $Y_{n}^\pm(k,\widetilde{F}), 1\leq n\leq 2$ were proposed. Indeed, using the commutator
\[
\left[\beta_D(k),\widetilde{F}\right][\varphi]:=\beta_D(k)[\widetilde{F}\varphi]-\widetilde{F}\beta_D(k)[\varphi]
\]
it can be shown that the low-order term corrections $Y_{n}^\pm(k,\widetilde{F}),\ n=1,2$ can be expressed in the equivalent form
\begin{equation}\label{eq:T1}
  Y_{1}^\pm(k,\widetilde{F})[\varphi]=(D\widetilde{F})\ (D\varphi)-\left[\beta_D(k),\widetilde{F}\right][\beta_D(k)\varphi]
\end{equation}
and
\begin{equation}\label{eq:T2}
  Y_{2}^\pm(k,\widetilde{F})[\varphi]=i\beta_D(k)\left(-\left[\beta_D(k),\widetilde{F}^2/2\right][\beta_D(k)\varphi]+\widetilde{F}\left[\beta_D(k),\widetilde{F}\right][\beta_D(k)\varphi]\right)
\end{equation}
where $D=\partial_{x_1}$. However, the calculation of high-order correction terms $Y_{n}^+(k,\widetilde{F}),\ 3\leq n$ via the stable recursions above becomes quite cumbersome. As such, a different strategy based on changes of variables (that straighten out the boundary $\Gamma$) and DtN corresponding to variable coefficient Helmholtz equation in half-planes is advocated in~\cite{nicholls2004shape} for stable computations of DtN maps. Given that our motivation is to construct readily computable DD transmission operators that are approximations of DtN operators, we will restrict to low-order terms $Y_{n}^+(k,F)$ in the perturbation series~\eqref{eq:pert_series_0}, which, as discussed above, can be computed by explicit and stable recursions.

In order to meet the coercivity requirements~\eqref{eq:well-pos_t}, we {\em complexify} the wavenumber $k$ in the form $\kappa=k+i\sigma,\sigma>0$ and we define
\begin{equation}\label{eq:Z_01}
  Y^{\pm}_L(\kappa,F):=\sum_{\ell=0}^L Y_{\ell}^\pm(\kappa,\widetilde{F})\varepsilon^\ell,\ L\leq 2
\end{equation}
using formulas~\eqref{eq:T1} and~~\eqref{eq:T2} for the definition of the operators in equation~\eqref{eq:Z_01}. Indeed, we establish the following
\begin{lemma}\label{coerc1}
Provided that $\varepsilon<\rho(k,F)$ is small enough, the following coercivity property holds
\[
\Im\langle Y^\pm_L(\kappa,F)\varphi,\ \varphi\rangle<0
\]
for all $\varphi\in H^{1/2}(\Gamma)$.
\end{lemma}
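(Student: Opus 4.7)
For the base case $L=0$, the operator $Y_0^\pm(\kappa,\widetilde F) = -i\beta_D(\kappa)$ is a Fourier multiplier, so a Parseval-type calculation gives
\[
\Im\bigl\langle Y_0^\pm\varphi,\varphi\bigr\rangle \;=\; -d\sum_{n\in\mathbb{Z}}\Re(\beta_{\kappa,n})\,|\varphi_n|^2.
\]
The key point is that $\Re(\beta_{\kappa,n})>0$ for every $n$: since $\kappa^2-\alpha_n^2 = (k^2-\sigma^2-\alpha_n^2)+2ik\sigma$ has strictly positive imaginary part $2k\sigma>0$, the principal square root with the prescribed branch cut on the negative imaginary axis lies in the open first quadrant. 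Writing $\beta_{\kappa,n}=a_n+ib_n$ with $a_n,b_n>0$, the identity $a_n b_n = k\sigma$ combined with $b_n\lesssim\langle\alpha_n\rangle$ yields the quantitative lower bound $a_n \gtrsim 1/\langle\alpha_n\rangle$, hence $-\Im\langle Y_0^\pm\varphi,\varphi\rangle \gtrsim \|\varphi\|_{H^{-1/2}(\Gamma)}^2$.

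For the corrections with $L\in\{1,2\}$, I would exploit the stabilized commutator expressions~\eqref{eq:T1}--\eqref{eq:T2}. The crucial pseudodifferential fact is that the commutator $[\beta_D(\kappa),\widetilde F]$ is of order zero (bounded on $H^s$ for every $s$), because $\beta_D(\kappa)$ is a first-order Fourier multiplier and $\widetilde F$ is smooth. Together with the mapping $\beta_D(\kappa):H^s\to H^{s-1}$ and the boundedness of multiplication by $\widetilde F$ on Sobolev scales, this yields operator-norm bounds $\|Y_\ell^\pm\|_{H^{1/2}\to H^{-1/2}}\le C_\ell(\kappa,\widetilde F)$ for $\ell=1,2$, and thus $|\Im\langle Y_\ell^\pm\varphi,\varphi\rangle|\le C_\ell\|\varphi\|_{H^{1/2}(\Gamma)}^2$.

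Assembling the two ingredients produces
\[
\Im\bigl\langle Y_L^\pm\varphi,\varphi\bigr\rangle \;\le\; -c_\sigma\,\|\varphi\|_{H^{-1/2}(\Gamma)}^2 \;+\; C(\kappa,\widetilde F)\,\varepsilon\,\|\varphi\|_{H^{1/2}(\Gamma)}^2,
\]
and this is where the main obstacle lies: the coercive leading term is only $H^{-1/2}$-sized, while the a priori perturbation estimate is in the stronger $H^{1/2}$ norm, so a naive absorption is not uniform in $\varphi$. To bridge this gap I would refine the imaginary-part analysis of the commutator pairings in~\eqref{eq:T1}--\eqref{eq:T2}: for real-valued $\widetilde F$ the leading (real-$k$) symbol of $[\beta_D(\kappa),\widetilde F]$ is formally skew-symmetric and thus contributes only to the real part of $\langle Y_\ell^\pm\varphi,\varphi\rangle$; the imaginary part inherits an extra factor of $\sigma$ together with a one-derivative gain, yielding an $H^{-1/2}$-scale bound that can be absorbed into the zeroth-order coercive term for $\varepsilon$ small enough. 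An alternative route is to invoke the coercivity of the exact DtN $Y^\pm(\kappa,F)$ via Green's identity ($\Im\kappa^2=2k\sigma>0$) combined with the operator-norm tail estimate $\|Y^\pm-Y_L^\pm\|_{H^{1/2}\to H^{-1/2}}=O(\varepsilon^{L+1})$ for $\varepsilon<\rho(k,F)$, but the identical norm mismatch reappears and must be resolved by the same finer analysis of the commutator imaginary parts, which I expect to be the main technical hurdle.
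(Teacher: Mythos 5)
Your two main ingredients are exactly the ones the paper uses: strict negativity of $\Im\langle Y_0^\pm(\kappa,\widetilde F)\varphi,\varphi\rangle=-\sum_{n}\Re\beta_{\kappa,n}|\varphi_n|^2$ (valid because $\Im(\kappa^2-\alpha_n^2)=2k\sigma>0$ places $\beta_{\kappa,n}$ in the open right half-plane), and the bound $|\langle Y_\ell^\pm(\kappa,\widetilde F)\varphi,\varphi\rangle|\lesssim\|\varphi\|^2_{H^{1/2}(\Gamma)}$ for $\ell=1,2$, followed by absorption of the $O(\varepsilon)$ corrections into the zeroth-order term. In fact the paper's proof \emph{is} the ``naive absorption'' you criticize: it writes $\Im\langle Y_L^\pm\varphi,\varphi\rangle\le \Im\langle Y_0^\pm\varphi,\varphi\rangle+C\varepsilon\|\varphi\|^2_{H^{1/2}(\Gamma)}<0$ ``for $\varepsilon$ small enough'' and stops, without addressing uniformity in $\varphi$. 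Your quantitative observation that $\Re\beta_{\kappa,n}=k\sigma/\Im\beta_{\kappa,n}\gtrsim\langle\alpha_n\rangle^{-1}$, so that the coercive term is only of $H^{-1/2}$ size, is correct and is precisely the point the paper does not record: with only the two displayed estimates, for fixed $\varepsilon$ the high-frequency modes violate the absorption, since $C\varepsilon\langle\alpha_n\rangle$ eventually dominates $k\sigma/\langle\alpha_n\rangle$. So up to the point where you flag the obstacle, you have reproduced (and sharpened) the paper's argument; the obstacle itself is a genuine issue with the proof as the paper states it.

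The gap is that your proposed repair remains an assertion. The claim that the imaginary parts of the pairings generated by \eqref{eq:T1}--\eqref{eq:T2} gain a factor of $\sigma$ together with a derivative is not established and is not obviously true termwise: the first term of \eqref{eq:T1} pairs as $\langle(D\widetilde F)(D\varphi),\varphi\rangle$, whose imaginary part equals $\int_0^d\widetilde F'\,\Im(\varphi'\overline{\varphi})\,dx_1$, and on a superposition of two adjacent Fourier modes of large order this is of size $\|\varphi\|^2_{H^{1/2}(\Gamma)}$ with no factor of $\sigma$; any gain must therefore come from cancellation against the commutator contribution $[\beta_D(\kappa),\widetilde F][\beta_D(\kappa)\varphi]$, which you have not exhibited (your formal skew-symmetry argument addresses the commutator alone, not this first term). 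Your alternative route through Green's identity for the exact $Y^\pm(\kappa,F)$ plus the tail bound \eqref{eq:mapping_1} runs into the same mismatch, as you note. So the proposal is incomplete exactly at the step the paper elides: to finish one needs a mode-weighted estimate of the form $|\Im\langle Y_\ell^\pm\varphi,\varphi\rangle|\lesssim\sum_n\langle\alpha_n\rangle^{-1}|\varphi_n|^2$ (or a comparable refinement compatible with the decay of $\Re\beta_{\kappa,n}$), and until that is proved neither your argument nor the paper's establishes the coercivity uniformly over $\varphi\in H^{1/2}(\Gamma)$.
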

\begin{proof}
By the construction of the Fourier multiplier operator $-i\beta_D(\kappa)$ we have that
\[
\Im\langle Y_{0}^\pm(\kappa,\widetilde{F})\varphi,\ \varphi\rangle=-\sum_{n\in\mathbb{Z}}\Re(\kappa^2-\alpha_n^2)^{1/2}|\varphi_{n}|^2<0
\]
for all $\varphi\in H^{1/2}(\Gamma_0)$, given that $\Re(\kappa^2-\alpha_n^2)^{1/2}>0$ for all $n\in\mathbb{Z}$. Using the fact that $\left|\langle Y_{\ell}^\pm(\kappa,\widetilde{F})\varphi,\ \varphi\rangle\right|\lesssim \|\varphi\|^2_{H^{1/2}(\Gamma)}$ we obtain
\[
\Im\langle  Y^\pm_L(\kappa,F)\varphi,\ \varphi\rangle\leq\Im\langle Y_{0}^\pm(\kappa,\widetilde{F})\varphi,\ \varphi\rangle+C\varepsilon \|\varphi\|^2_{H^{1/2}(\Gamma)}<0
\]
for $\varepsilon$ small enough.
\end{proof}

We are now in the position to construct quasi-optimal transmission operators $Z_{j-1,j}$ and $Z_{j+1,j}$. We assume without loss of generality that each grating profile $F_j(x_1)=\varepsilon \widetilde{F}_j(x_1),\ 0\leq j\leq N$, and we select transmission operators in the form
\begin{equation}\label{eq:layered_pm}
  Z_{j-1,j}^{s,L}:=Y_L^{+}(\kappa_{j-1},\widetilde{F}_{j-1}),\ 1\leq j\leq N+1,\qquad Z_{j+1,j}^{s,L}:=Y^{-}_L(\kappa_{j+1},\widetilde{F}_j),0\leq j\leq N,
\end{equation}
where $\kappa_j=k_j+i\sigma_j,\ \sigma_j>0$ and $k_j$ is the wavenumber corresponding to the layer domain $\Omega_j$. We note that the transmission operators given in equation~\eqref{eq:layered_pm} correspond to semi-infinite, and not bounded layers. As such, the width of the layers is not incorporated in the definition of the transmission operators defined in equation~\eqref{eq:layered_pm}.

It is also possible to employ the high-order shape deformation technique to construct transmission operators that are approximations of DtN operators corresponding to bounded periodic layers. Indeed, in the case of a bounded layer middle layer domain $\Omega_j$ we consider the boundary value problem
\begin{eqnarray}\label{eq:Hl}
  \Delta v_j+k_j^2v_j&=&0\quad{\rm in}\ \Omega_j^{per}\\
  v_j&=&g_{j,j-1}\quad{\rm on}\ \Sigma_{j,j-1}\nonumber\\
  v_j&=&g_{j,j+1}\quad{\rm on}\ \Sigma_{j,j+1}\nonumber
\end{eqnarray}
for which we define the DtN operator $\mathbf{Y}_j(k_j)\begin{bmatrix}g_{j,j-1}\\g_{j,j+1}\end{bmatrix}:=\begin{bmatrix}\partial_{n_j}v_j|_{\Sigma_{j,j-1}}\\ \partial_{n_j}v_j|_{\Sigma_{j,j+1}}\end{bmatrix}$. We mention that (a) the DtN operators $\mathbf{Y}_j(k_j)$ are $2\times 2$ matrix operators $\mathbf{Y}_j(k_j)=\begin{bmatrix}Y_{j-1,j-1}(k_j) & Y_{j-1,j+1}(k_j) \\ Y_{j+1,j-1}(k_j) & Y_{j+1,j+1}(k_j) \end{bmatrix}$, and (b) the same DtN operators are not properly defined for all wavenumbers $k_j$. Assuming that $F_{j-1}(x_1)=\varepsilon \widetilde{F_{j-1}}(x_1)$ and respectively $F_{j}(x_1)=\varepsilon \widetilde{F_{j}}(x_1)$ where $\widetilde{F_{j-1}}(x_1)$ and $\widetilde{F_{j}}(x_1)$ are analytic, the DtN operator $\mathbf{Y}_j(k_j)$ can be expressed in terms of the perturbation series
\begin{equation}\label{eq:pert_series_Yj}
\mathbf{Y}_j(k_j)=\sum_{\ell=0}^\infty \mathbf{Y}_{j,\ell}(k_j)\varepsilon^\ell.
\end{equation}
The operator terms in the perturbation series above are defined recursively in the following manner~\cite{nicholls2012three}
\begin{eqnarray}\label{eq:recursiveYj}
  \mathbf{Y}_{j,0}(k_j)&=&i\beta_D(k_j)\begin{bmatrix}\coth(ih_j\beta_D(k_j))&-\csch(ih_j \beta_D(k_j))\\-\csch(ih_j\beta_D(k_j)) & \coth(ih_j\beta_D(k_j))\end{bmatrix}\\
  \mathbf{Y}_{j,n}(k_j)&=&-(\mathbf{C}_n(\widetilde{F}_{j-1})+\mathbf{C}_n(\widetilde{F}_{j}))\frac{k_j^2}{i\beta_D(k_j)}-D(\mathbf{C}_n(\widetilde{F}_{j-1})+\mathbf{C}_n(\widetilde{F}_{j}))\frac{1}{i\beta_D(k_j)}D\nonumber\\
  &-&\sum_{m=0}^{n-1}\mathbf{Y}_{j,m}(k_j)\left[\mathbf{S}_{n-m}(\widetilde{F}_{j-1})+\mathbf{S}_{n-m}(\widetilde{F}_{j})\right]
  \end{eqnarray}
where $h_j=\overline{F}_{j-1}-\overline{F}_j$ and
\[
\mathbf{C}_n(\widetilde{F}_{j-1}):=\widetilde{F}_{j-1,n}\begin{bmatrix}\shch_{n+1}(ih_j\beta_D(k_j)) & (-1)^{n+1}\shch_{n+1}(0)\\ 0 & 0\end{bmatrix}\frac{(i\beta_D(k_j))^n}{\sinh(ih_j\beta_D(k_j))}
\]
\[
\mathbf{C}_n(\widetilde{F}_{j}):=\widetilde{F}_{j,n}\begin{bmatrix}0 & 0\\ -\shch_{n+1}(0) & (-1)^n\shch_{n+1}(ih_j\beta_D(k_j))\end{bmatrix}\frac{(i\beta_D(k_j))^n}{\sinh(ih_j\beta_D(k_j))}
\]
as well as
\[
\mathbf{S}_n(\widetilde{F}_{j-1}):=\widetilde{F}_{j-1,n}\begin{bmatrix}\shch_{n}(ih_j\beta_D(k_j)) & (-1)^{n}\shch_{n}(0)\\ 0 & 0\end{bmatrix}\frac{(i\beta_D(k_j))^n}{\sinh(ih_j\beta_D(k_j))}
\]
\[
\mathbf{S}_n(\widetilde{F}_{j}):=\widetilde{F}_{j,n}\begin{bmatrix}0 & 0\\ \shch_{n}(0) & (-1)^n\shch_{n}(ih_j\beta_D(k_j))\end{bmatrix}\frac{(i\beta_D(k_j))^n}{\sinh(ih_j\beta_D(k_j))}
\]
where $$\shch_n(z)=\frac{e^z-(-1)^ne^{-z}}{2}.$$
\begin{remark}\label{stab_rec}
  We note that recursions~\eqref{eq:recursiveYj} can be carried out in a straightforward manner in the Fourier space. However, unlike recursions~\eqref{eq:recursive_1}, the recursions~\eqref{eq:recursiveYj} do not avoid subtractive cancellations, and, as such, are prone to instabilities for rougher profiles $\widetilde{F}_{j-1}$ and $\widetilde{F}_j$. In order to bypass these instabilities, an alternative strategy based on changes of variables that straighten out the boundaries is proposed in~\cite{hong2017stable} for robust perturbative evaluations of layer DtN. Nevertheless, the latter strategy requires numerical solutions for the evaluation of the terms in the perturbation series of the DtN operators $\mathbf{Y}_j(k_j)$. As such, the evaluation of the DtN operators $\mathbf{Y}_j(k_j)$ via the straightening of boundaries strategy in~\cite{hong2017stable} becomes more involved than the straightforward one given by the recursions~\eqref{eq:recursiveYj}. It is our conviction that optimized transmission operators ought to be simple to implement in order to warrant their incorporation in DD algorithms. Consequently, we advocate for the use of the simple recursions~\eqref{eq:recursiveYj} to construct approximations of DtN operators, and we point out their limitations in the case of rough profiles. 
  \end{remark}
Again, the complexification of the wavenumber $\kappa_j=k_j+i\sigma_j,\sigma_j>0$ leads to corresponding Fourier multipliers $\mathbf{Y}_{j,0}(\kappa_j)$ (and thus $\mathbf{Y}_{j,n}(\kappa_j),1\leq n$) that are well defined for all values $h_j$. Therefore, we define the $2\times 2$ matrix operators
\[
\mathbf{Y}_j^L(\kappa_j)=\sum_{\ell=0}^L \mathbf{Y}_{j,\ell}(\kappa_j)\varepsilon^\ell=\begin{bmatrix}Y^L_{j-1,j-1}(\kappa_j) & Y^L_{j-1,j+1}(\kappa_j) \\ Y^L_{j+1,j-1}(\kappa_j) & Y^L_{j+1,j+1}(\kappa_j) \end{bmatrix},\ \kappa_j=k_j+i\sigma_j,\ \sigma_j>0
\]
and we select the transmission operators corresponding to the layer $\Omega_j,\ 1\leq j\leq N$ in the form
\begin{equation}\label{eq:trans_Zj}
  Z_{j,j-1}^L:=Y^L_{j-1,j-1}(\kappa_j),\ 1\leq j\leq N-1,\qquad Z^L_{j,j+1}:=Y^L_{j+1,j+1}(\kappa_j),\ 1\leq j\leq N
\end{equation}
as well as
\begin{equation}\label{eq:f_two_ops}
  Z_{0,1}^L:=Y^{+}_{L}(\kappa_0,\widetilde{F}_0)\qquad Z_{N+1,N}^L:=Y^{-}_{L}(\kappa_{N+1},\widetilde{F}_N).
  \end{equation}
Again, under the assumption that the shape perturbation parameter $\varepsilon$ is small enough (and in particular smaller than the radius of convergence of the perturbation series~\eqref{eq:pert_series_Yj}), the arguments in the proof of Lemma~\ref{coerc1} can be easily adapted to derive coercivity properties of the type~\eqref{eq:well-pos_t} for the transmission operators $Z_{j,j-1}^L$ and $Z^L_{j,j+1}$ defined in equation~\eqref{eq:trans_Zj}.

Finally, the transmission operators $Z_{j,j+1}^\flat$ corresponding to the DD with strip subdomains~\eqref{DDM_t_final} are simply selected to be  complexified versions of half-space DtN operators, that is
\begin{equation}\label{eq:Zflat_choice}
Z_{j,j+1}^\flat=Z_{j+1,j}^\flat:=-i\beta_D(\kappa_j),\quad 0\leq j\leq N.
\end{equation}
In what follows, we refer to the DD formulations~\eqref{DDM_t} and respectively~\eqref{DDM_t_final} corresponding to the choice of transmission operators presented in this section as quasi-optimal DD (QO DD). We refer in what follows to the operator QO DD matrix~\eqref{eq:mA} corresponding to the choice of transmission operators given on equation~\eqref{eq:layered_pm} by the acronym $\mathcal{A}^s$,  and to the one corresponding to transmission operators~\eqref{eq:trans_Zj} by $\mathcal{A}$. In the next section we derive explicit formulas for calculations of RtR operators associated with the DD formulations~\eqref{DDM_t} and respectively~\eqref{DDM_t_final} based on robust quasi-periodic boundary integral equations.

\section{Calculations of RtR operators in terms of boundary integral operators associated with quasi-periodic Green functions~\label{rtr}}
\subsection{Quasi-periodic Green functions, layer potentials and integral operators}

Clearly, at the heart of the implementation of DD are computations of their associated RtR maps. We present in this Section explicit representations of RtR maps in terms of boundary integral operators associated with quasi-periodic Green functions that will serve as the basis of the implementation of the DD formulations considered in this text. For a given wavenumber $k$, we define the $\alpha-$quasi-periodic Green function
\begin{equation}\label{eq:qper_G}
  G^q_k(x_1,x_2)=\sum_{n\in\mathbb{Z}} e^{-i\alpha nd}G_k(x_1+nd,x_2)
\end{equation}
where $G_k(x_1,x_2)=\frac{i}{4}H_0^{(1)}(k|\mathbf{x}|),\ \mathbf{x}=(x_1,x_2)$. We also define $\alpha_r:=\alpha+\frac{2\pi}{d}r$ and $\beta_{r}=(k^2-\alpha_r^2)^{1/2}$, where the branch of the square roots in the definition of $\beta_{r}$ is chosen in such a way that $\sqrt{1}=1$, and that the branch cut coincides with the negative imaginary axis. The series in the definition of the Green function $G^q_k$ in equation~\eqref{eq:qper_G} converges for wavenumbers $k$ for which none of the coefficients $\beta_r$ is equal to zero---that is wavenumber which are not Wood frequencies. In the case of wavenumber $k$ that is a Wood frequency, shifted quasi-periodic Green functions can be used instead~\cite{bruno2017three,perez2018domain}. 

We assume that the interface $\Gamma$ is defined as $\Gamma:=\{(x_1,F(x_1)):0\leq x_1\leq d\}$ where $F$ is a $C^2$ periodic function of principal period equal to $d$. Given a density $\varphi$ defined on $\Gamma$ (which can be extended by $\alpha$-quasiperiodicity to arguments $(x_1,F(x_1)), x_1\in\mathbb{R}$) we define the single and double layer potentials corresponding to a wavenumber $k$
\begin{equation}\label{eq:single_layer}
  [SL_k\varphi](\mathbf{x}):=\int_{\Gamma}G^q_k(\mathbf{x}-\mathbf{y})\varphi(\mathbf{y})ds(\mathbf{y})\quad [DL_k\varphi](\mathbf{x}):=\int_{\Gamma}\frac{\partial G^q_k(\mathbf{x}-\mathbf{y})}{\partial \mathbf{n}(\mathbf{y})}\varphi(\mathbf{y})ds(\mathbf{y})
\end{equation}
for $\mathbf{x}\notin\Gamma$ and $\mathbf{x}=(x_1,x_2)$ such that $0\leq x_1\leq d$. It is immediate to see that the quantities $[SL_k\varphi](\mathbf{x})$ and $[DL_k\varphi](\mathbf{x})$ are $\alpha$-quasiperiodic outgoing solutions of the Helmholtz equation corresponding to wavenumber $k$ in the domains $\{\mathbf{x}:x_2>F(x_1)\}$ and $\{\mathbf{x}:x_2<F(x_1)\}$ respectively. The Dirichlet and Neumann boundary values of the single and double layer potentials give rise to the four boundary integral operators associated with quasiperiodic Helmholtz problems. Denoting by $n(\mathbf{x})=(-F'(x_1),1),\ \mathbf{x}=(x_1,F(x_1)),\ 0\leq x_1\leq d$ the (non-unit) normal to $\Gamma$ pointing into the domain $\{\mathbf{x}:x_2>F(x_1)\}$ we define the single layer boundary integral operator
\begin{equation}\label{eq:SL}
 [S_k(\varphi)](\mathbf{x}):= \lim_{\varepsilon\to 0} [SL_k\varphi](\mathbf{x}\pm \varepsilon n(\mathbf{x}))=\int_{0}^dG_k^q(x-y,F(x)-F(y))\varphi((y,F(y)))\ |\mathbf{y}|'dy,
\end{equation}
with $\mathbf{x}=(x,F(x))$ and $\mathbf{y}=(y,F(y))$. Similarly, we also define the {\em weighted} single layer operator in the form
\begin{equation}\label{eq:SLw}
  [S_k^w(\varphi)](\mathbf{x}):=\int_{0}^dG_k^q(x-y,F(x)-F(y))\varphi((y,F(y)))dy,\quad \mathbf{x}=(x,F(x)).
\end{equation}
We further define
\begin{equation}\label{eq:DLT}
  \lim_{\varepsilon\to 0} \nabla[SL_k\varphi](\mathbf{x}\pm \varepsilon n(\mathbf{x}))\cdot n(\mathbf{x})=\mp \frac{1}{2}\varphi(\mathbf{x})\ |\mathbf{x}|'+[(K_k)^\top(\varphi)](\mathbf{x}),\quad \mathbf{x}=(x,F(x)).
\end{equation}
In equation~\eqref{eq:DLT}, the adjoint double layer operator can be expressed explicitly as
\begin{equation}\label{eq:DLT_explicit}
  [(K_k)^\top(\varphi)](\mathbf{x})=\int_{\Gamma}\frac{\partial G_k^q(\mathbf{x}-\mathbf{y})}{\partial n(\mathbf{x})}\varphi(\mathbf{y})ds(y),\quad \mathbf{x}\in\Gamma.
\end{equation}
We also define a weighted version of the adjoint double layer operators in the form
\begin{equation}\label{eq:DLT_explicit1}
  [(K_k^w)^\top(\varphi)](\mathbf{x})=\int_{0}^d\frac{\partial G_k^q(\mathbf{x}-\mathbf{y})}{\partial n(\mathbf{x})}\varphi((y,F(y)))dy,\quad \mathbf{x}\in\Gamma.
\end{equation}
In addition, applying the same machinery to the double layer potentials we can define the double layer operator
\begin{equation}\label{eq:DL}
  \lim_{\varepsilon\to 0} [DL_k\varphi](\mathbf{x}\pm \varepsilon n(\mathbf{x}))=\pm \frac{1}{2}\varphi(\mathbf{x})+[(K_k)(\varphi)](\mathbf{x}),\quad \mathbf{x}=(x,F(x))
\end{equation}
as well as the hypersingular operators
\begin{equation}\label{eq:N}
  \lim_{\varepsilon\to 0} \nabla[DL_k\varphi](\mathbf{x}\pm \varepsilon n(\mathbf{x}))\cdot\mathbf{n}(\mathbf{x})=[(N_k)(\varphi)](\mathbf{x}),\quad \mathbf{x}=(x,F(x)).
\end{equation}
Weighted versions of the double layer and hyper singular operators are defined accordingly~\cite{dominguez2016well}. In what follows we express RtR operators associated with quasiperiodic Helmholtz problems using the boundary integral operators introduced above.

\subsection{Boundary integral  representation of RtR maps and invertibility of the DDM formulation, one interface\label{I_RtR0}}

We start with the analysis of the case of one interface $\Gamma_0$ separating two semi-infinite domains under the assumption that $\Gamma_0$ is the graph of an analytic and periodic function. The motivation for this is that the particularly simple case of one interface already contains the main difficulties related to the analysis of the well-posedness of QO DD. Our analysis relies on the mapping properties of the RtR operators, which, in turn, are derived from boundary integral operator representations of the latter. Using the $\alpha$-quasiperiodic boundary integral operators above, we are now in the position to compute the RtR operators $\mathcal{S}^j,j=0,1$ corresponding to the semi-infinite domains $\Omega_j, j=0,1$. We note that in this case the operators $Z_{0,1}^{s,L}$ coincide with the operators $Z_{0,1}^L$, and the operators $Z_{1,0}^{s,L}$ coincide with the operators $Z_{1,0}^L$. We start with the calculation of RtR operator $\mathcal{S}^0$ corresponding to problem~\eqref{eq:H0} by seeking its solution $w_0$ in the form
\[
w_0(\mathbf{x}):=[SL_k\varphi_0](\mathbf{x}),\quad \mathbf{x}\notin\Gamma_0.
\]
We immediately obtain the following explicit formula for the RtR operator $\mathcal{S}^0$~\eqref{eq:S0}:
\begin{equation}\label{eq:calc_S0}
  \mathcal{S}^0=I-(Z_{0,1}^L+Z_{1,0}^L)S_{\Gamma_0,k_0}^w\left(\frac{1}{2}I+(K_{\Gamma_0,k_0}^w)^\top+Z_{1,0}^LS_{\Gamma_0,k_0}^w\right)^{-1},
\end{equation}
where the operators $(K_{\Gamma_0,k_0}^w)^\top$ are defined just as in equations~\eqref{eq:DLT_explicit1} but with normal $n_0$ pointing into $\Omega^{-}_0$ (exterior of $\Omega_0$). Here and in what follows we introduce an additional subscript to make explicit the curve that is the domain of integration of the boundary integral operators. Our next goal is to establish the robustness of the formulation~\eqref{eq:calc_S0}. We assume in what follows that the parameter $\varepsilon$ in the shape $\Gamma_0$ given by $F_0(x_1)=\varepsilon \widetilde{F}_0(x_1)$ is smaller than the minimum of the radii $\rho_j$ of convergence of the boundary perturbation expansion series of the DtN operators $Y^\pm(k_j,F_0)$
\[
Y^\pm(k_j,F_0)=\sum_{n=0}^\infty Y_{n}^\pm(k_j,\widetilde{F}_0)\varepsilon^n,\ j=0,1.
\]
We establish the following
\begin{theorem}\label{inv_A}
Assuming that the profile function $\widetilde{F_0}(x_1)$ is periodic and analytic, and the shape parameter $\varepsilon$ is small enough, the operator
\[
\mathcal{A}_{1,0}:=\frac{1}{2}I+(K_{\Gamma_0,k_0}^w)^\top+Z_{1,0}^LS_{\Gamma_0,k_0}^w:H^{-1/2}(\Gamma_0)\to H^{-1/2}(\Gamma_0)
\]
is invertible with continuous inverse.
\end{theorem}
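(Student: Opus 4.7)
My plan is to prove the theorem in two steps: (i) Fredholmness of $\mathcal{A}_{1,0}$ of index zero on $H^{-1/2}(\Gamma_0)$, and (ii) injectivity of $\mathcal{A}_{1,0}$; these together with the open mapping theorem then yield a bounded inverse. For (i) I would perturb from the flat interface case $\varepsilon=0$, where every ingredient of $\mathcal{A}_{1,0}$ reduces to an explicit Fourier multiplier; for (ii) I would exploit the well-posedness of the generalized Robin boundary value problem~\eqref{eq:H0} established via Lemma~\ref{coerc1}.

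I would handle injectivity first. Suppose $\varphi_0 \in H^{-1/2}(\Gamma_0)$ satisfies $\mathcal{A}_{1,0}\varphi_0 = 0$, and define $w_0 := [SL_{k_0}\varphi_0]$. Then $w_0$ is $\alpha$-quasi-periodic and radiative on both sides of $\Gamma_0$. The Dirichlet and normal-derivative jump relations for the single layer show that $\mathcal{A}_{1,0}\varphi_0 = 0$ is precisely the homogeneous boundary condition $\partial_{n_0}w_0 + Z^L_{1,0}w_0 = 0$ on $\Gamma_0$ taken from the $\Omega_0$ side. Since Lemma~\ref{coerc1} ensures the coercivity~\eqref{eq:well-pos_0} for small $\varepsilon$, the boundary value problem~\eqref{eq:H0} is well-posed, so $w_0 \equiv 0$ in $\Omega_0^{per}$. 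Continuity of the single-layer Dirichlet trace across $\Gamma_0$ gives $w_0|_{\Gamma_0}=0$ from below as well; then $w_0$ solves a radiative quasi-periodic exterior Dirichlet problem with zero data on $\Gamma_0$—also a well-posed problem—so $w_0 \equiv 0$ everywhere off $\Gamma_0$. The normal-derivative jump for $SL_{k_0}$ then forces $\varphi_0 = 0$.

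For Fredholmness I analyze the flat case $\varepsilon=0$ explicitly. There $(K^w_{\Gamma_0,k_0})^\top=0$ by the reflection symmetry of $G^q_{k_0}$ across a horizontal line, and both $S^w_{\Gamma_0,k_0}$ and $Z^L_{1,0}=-i\beta_D(\kappa_1)$ are Fourier multipliers with symbols $i/(2\beta_{k_0,r})$ and $-i\beta_{\kappa_1,r}$ respectively, yielding
\[
\widehat{\mathcal{A}_{1,0}^{\,\varepsilon=0}}_r \;=\; \tfrac{1}{2}\Bigl(1+\tfrac{\beta_{\kappa_1,r}}{\beta_{k_0,r}}\Bigr) \;=\; \frac{\beta_{k_0,r}+\beta_{\kappa_1,r}}{2\,\beta_{k_0,r}}.
\]
Since $\kappa_1 = k_1 + i\sigma_1$ with $\sigma_1>0$ forces $\mathrm{Im}\,\beta_{\kappa_1,r}>0$ for all $r$ while $\mathrm{Im}\,\beta_{k_0,r}\geq 0$, the symbol is nonzero and uniformly bounded above and below by positive constants (using $\beta_{\kappa_1,r}/\beta_{k_0,r}\to 1$ as $|r|\to\infty$), so $\mathcal{A}_{1,0}^{\,\varepsilon=0}$ is a topological isomorphism on $H^{-1/2}(\Gamma_0)$. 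The analyticity of $\widetilde{F_0}$ together with the analytic $\varepsilon$-dependence of the shape-perturbation expansions for $(K^w_{\Gamma_0,k_0})^\top$, $S^w_{\Gamma_0,k_0}$, and $Z^L_{1,0}$ (cf.~Section~\ref{trans_ops}) makes $\varepsilon\mapsto\mathcal{A}_{1,0}$ continuous in the $\mathcal{L}(H^{-1/2}(\Gamma_0))$ operator norm, and a Neumann-series argument then preserves invertibility for $\varepsilon$ sufficiently small.

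The main obstacle is ensuring uniform nondegeneracy of the flat-case symbol, which requires $k_0$ not to be a Wood frequency so that $\beta_{k_0,r}\neq 0$ for all $r$. If $k_0$ is Wood-exceptional, one replaces $G^q_{k_0}$ by a shifted quasi-periodic Green function as in~\cite{bruno2017three,perez2018domain} and the argument carries through unchanged. A secondary point is the quantitative control of the operator-norm $\varepsilon$-dependence in the Neumann perturbation, which is secured by restricting $\varepsilon$ below the convergence radius of the shape-perturbation series constructed in Section~\ref{trans_ops}.
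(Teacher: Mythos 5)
Your argument is essentially correct, but it takes a genuinely different route from the paper. The paper never perturbs from the flat interface: it works directly at the rough profile and decomposes $\mathcal{A}_{1,0}=I+\mathcal{A}_{1,0}^0+\mathcal{K}_{1,0}$, where $\|\mathcal{A}_{1,0}^0\|_{H^{-1/2}\to H^{-1/2}}\lesssim\varepsilon^2$ comes from the shape-perturbation estimate~\eqref{eq:mapping_1} and $\mathcal{K}_{1,0}$ is compact because the differences $Y^{-}_\ell(k_1,\widetilde F_0)-Y^{-}_\ell(\kappa_1,\widetilde F_0)$ are smoothing (the symbols differ by $\mathcal{O}(n^{-1})$, plus the commutator representations~\eqref{eq:T1}--\eqref{eq:T2}); invertibility then follows from Fredholm theory once injectivity is shown, the latter being exactly the potential-theoretic/coercivity argument you give first. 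You instead prove invertibility outright by computing the explicit Fourier symbol $\tfrac12\bigl(1+\beta_{\kappa_1,r}/\beta_{k_0,r}\bigr)$ at $\varepsilon=0$ and running a Neumann-series perturbation in $\varepsilon$; note that if this step is accepted, your injectivity step becomes redundant. What each approach buys: yours is more elementary and makes the smallness condition on $\varepsilon$ concrete through an explicit isomorphism, but it hinges on operator-norm continuity in $\varepsilon$ of the quasi-periodic layer potentials — in particular of $S^w_{\Gamma_0,k_0}$ as a map $H^{-1/2}(\Gamma_0)\to H^{1/2}(\Gamma_0)$, since it is composed with the order-one operator $Z^L_{1,0}$ — and this is \emph{not} supplied by Section~\ref{trans_ops}, which only treats the shape expansion of DtN operators; you would need to invoke (or prove) shape-analyticity/continuity of boundary integral operators under perturbations of an analytic graph, a true but nontrivial fact deserving its own citation. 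The paper's decomposition, by contrast, costs the use of Fredholm theory but yields as a byproduct the small-plus-compact structure of $\mathcal{S}^0$ in~\eqref{eq:repr_S0}--\eqref{eq:smallness}, which is precisely what is reused in Theorems~\ref{eq:inv_DDM_1interface} and~\ref{thm_inv_DD}; your Neumann-series argument proves the present theorem but does not produce those mapping properties. Your handling of Wood frequencies and of uniqueness for the lower quasi-periodic Dirichlet problem (valid for graph profiles) is consistent with the paper's standing assumptions.
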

\begin{proof}
Assuming that $\varepsilon<\rho_1$, where $\rho_1$ is the radius of convergence of the shape perturbation series of the DtN operator $Y^{-}(k_1,F_0)$, we have~\cite{nicholls2004shape}
\begin{equation}\label{eq:mapping_1}
\|Y^{-}(k_1,F_0)-\sum_{\ell=0}^L Y_{\ell}^{-}(k_1,\widetilde{F}_0)\varepsilon^\ell\|_{H^{1/2}(\Gamma_0)\to H^{-1/2}(\Gamma_0)}\lesssim \varepsilon^{L+1}.
\end{equation}
Given that
\[
|(k_1^2-\alpha_n^2)^{1/2}-((k_1+i\sigma_1)^2-\alpha_n^2)^{1/2}|=\mathcal{O}(n^{-1}),\ n\to \infty
\]
it follows that
\[
Y_0^{-}(k_1,\widetilde{F}_0)-Y_0^{-}(\kappa_1,\widetilde{F}_0)=-i(\beta_D(k_1)-\beta_D(\kappa_1)):H^{1/2}(\Gamma_0)\to H^{3/2}(\Gamma_0).
\]
Using the stable commutator representations~\eqref{eq:T1} and~\eqref{eq:T2}, together with the mapping properties of the commutators established in~\cite{nicholls2004shape}, we obtain
\[
Y_1^{-}(k_1,\widetilde{F}_0)-Y_1^{-}(\kappa_1,\widetilde{F}_0):H^{1/2}(\Gamma_0)\to H^{3/2}(\Gamma_0)
\]
and respectively
\[
Y_2^{-}(k_1,\widetilde{F}_0)-Y_2^{-}(\kappa_1,\widetilde{F}_0):H^{1/2}(\Gamma_0)\to H^{1/2}(\Gamma_0).
\]
In conclusion, we have 
\[
\sum_{\ell=0}^L Y_{\ell}^{-}(k_1,\widetilde{F}_0)\varepsilon^\ell-Z_{1,0}^L=Z_{1,0}^{L,0}+Z_{1,0}^{L,1}
\]
where $Z_{1,0}^{L,0}:H^{1/2}(\Gamma_0)\to H^{3/2}(\Gamma_0)$, and $Z_{1,0}^{L,1}=0$ for $L\leq 1$ and
$\|Z_{1,0}^{L,1}\|_{H^{1/2}(\Gamma_0)\to H^{1/2}(\Gamma_0)}\lesssim \varepsilon^{2}$ for $L=2$. Consequently, we can express the operator $\mathcal{A}_{1,0}$ in the form
\[
\mathcal{A}_{1,0}=I+\mathcal{A}_{1,0}^0+\mathcal{K}_{1,0}, L\leq 2
\]
where 
\[
\|\mathcal{A}_{1,0}^0\|_{H^{-1/2}(\Gamma_0)\to H^{-1/2}(\Gamma_0)}\lesssim \varepsilon^{2}
\]
and 
\[
\mathcal{K}_{1,0}:H^{-1/2}(\Gamma_0)\to H^{1/2}(\Gamma_0).
\]
In conclusion, the operator $\mathcal{A}_{1,0}:H^{-1/2}(\Gamma_0)\to H^{-1/2}(\Gamma_0)$ is a compact perturbation the operator $I+\mathcal{A}_{1,0}^0$, and the latter is invertible in the space $H^{-1/2}(\Gamma_0)$ provided that $\varepsilon$ is small enough. The invertibility of the operator $\mathcal{A}_{1,0}$ can be then established then via the Fredholm theory provided that the same operator is injective. The latter, in turn, follows from classical arguments and relies on the coercivity of the operator $Z_{1,0}^L$ established in Lemma~\ref{coerc1}.
\end{proof}

An immediate corollary of the results established in Theorem~\ref{inv_A} is a representation of the RtR operator $\mathcal{S}^0$ in the form
\begin{equation}\label{eq:repr_S0}
\mathcal{S}^0=\mathcal{S}^0_0+\mathcal{K}^0_0
\end{equation}
where $\mathcal{K}^0_0:H^{-1/2}(\Gamma_0)\to H^{1/2}(\Gamma_0)$ and thus $\mathcal{K}_0^0$ is a compact operator in $H^{-1/2}(\Gamma_0)$, and $\mathcal{S}^0_0:H^{-1/2}(\Gamma_0)\to H^{-1/2}(\Gamma_0)$ has a small norm
\begin{equation}\label{eq:smallness}
\|\mathcal{S}^0_0\|_{H^{-1/2}(\Gamma_0)\to H^{-1/2}(\Gamma_0)}\lesssim \varepsilon^{2}.
\end{equation}
A similar result can be established for the representation of the RtR operator $\mathcal{S}^1$ in the form $\mathcal{S}^1=\mathcal{S}^1_0+\mathcal{K}^1_0$, where the operators in the latter decomposition have the same mapping properties as those of the operators $\mathcal{S}^0_0$ and $\mathcal{K}^0_0$. We are now in the position to establish the well-posedness of the DD formulation in the case of one interface:
\begin{theorem}\label{eq:inv_DDM_1interface}
Assuming that he profile function $\widetilde{F_0}(x_1)$ is periodic and analytic, the QO DD operator matrix
\[
\mathcal{A}=\begin{bmatrix}I & \mathcal{S}^1\\ \mathcal{S}^0 & I\end{bmatrix}
\]
is invertible with continuous inverse in the space $H^{-1/2}(\Gamma_0)\times H^{-1/2}(\Gamma_0)$ provided that the shape parameter $\varepsilon$ is small enough.
\end{theorem}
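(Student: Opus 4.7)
The plan is to write $\mathcal{A}$ as an explicitly invertible operator plus a compact perturbation, and reduce invertibility to injectivity via the Fredholm alternative. From Theorem~\ref{inv_A} together with the resulting decomposition (\ref{eq:repr_S0})--(\ref{eq:smallness}), and the entirely analogous result obtained by repeating the proof of Theorem~\ref{inv_A} with $\Omega_1^{per}$ in place of $\Omega_0^{per}$, both RtR maps admit splittings
\[
\mathcal{S}^j=\mathcal{S}^j_0+\mathcal{K}^j_0,\qquad j=0,1,
\]
with $\mathcal{K}^j_0:H^{-1/2}(\Gamma_0)\to H^{1/2}(\Gamma_0)$ (hence compact as an endomorphism of $H^{-1/2}(\Gamma_0)$) and $\|\mathcal{S}^j_0\|_{H^{-1/2}(\Gamma_0)\to H^{-1/2}(\Gamma_0)}\lesssim \varepsilon^2$.

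Next I would decompose
\[
\mathcal{A}=\mathcal{A}_0+\mathcal{K}_{\mathcal{A}},\quad \mathcal{A}_0:=\begin{bmatrix}I & \mathcal{S}^1_0\\ \mathcal{S}^0_0 & I\end{bmatrix},\quad \mathcal{K}_{\mathcal{A}}:=\begin{bmatrix}0 & \mathcal{K}^1_0\\ \mathcal{K}^0_0 & 0\end{bmatrix}.
\]
For $\varepsilon$ small enough, the off-diagonal blocks of $\mathcal{A}_0$ have operator norm $\mathcal{O}(\varepsilon^2)<1$, so $\mathcal{A}_0$ is invertible on $H^{-1/2}(\Gamma_0)\times H^{-1/2}(\Gamma_0)$ by a Neumann series; meanwhile $\mathcal{K}_{\mathcal{A}}$ is compact on the same space since each of its nonzero blocks factors through the compactly embedded subspace $H^{1/2}(\Gamma_0)$. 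Therefore $\mathcal{A}$ is a compact perturbation of the invertible operator $\mathcal{A}_0$, hence Fredholm of index zero, and invertibility reduces to injectivity.

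For the injectivity step, suppose $\mathcal{A} f=0$ with $f=(f_{0,1},f_{1,0})^{\top}$. Well-posedness of the boundary value problem (\ref{eq:H0}) and of its counterpart on $\Omega_1^{per}$, both guaranteed by the coercivity of the transmission operators established in Lemma~\ref{coerc1}, produce unique radiative $\alpha$-quasiperiodic Helmholtz solutions $u_0$ in $\Omega_0^{per}$ and $u_1$ in $\Omega_1^{per}$ satisfying $\partial_{n_0}u_0+Z_{1,0}^L u_0=f_{0,1}$ on $\Sigma_{0,1}$ and $\partial_{n_1}u_1+Z_{0,1}^L u_1=f_{1,0}$ on $\Sigma_{1,0}$. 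The identity $\mathcal{A} f=0$ is precisely the matching of the outgoing Robin traces of $u_0$ and $u_1$ on $\Gamma_0$. By the equivalence argument of Section~\ref{DDM}---predicated on the injectivity of $Z_{0,1}^L+Z_{1,0}^L$, which in turn follows from the coercivity of each summand (Lemma~\ref{coerc1})---this matching is equivalent to the classical continuity conditions $u_0=u_1$ and $\partial_{n_0}u_0=-\partial_{n_1}u_1$ across $\Gamma_0$. Hence $(u_0,u_1)$ solves the transmission problem (\ref{system_t}) with $N=0$ and $u^{inc}\equiv 0$, and the standard uniqueness for radiative quasi-periodic Helmholtz transmission problems with real positive wavenumbers, together with the Rayleigh radiation conditions (\ref{eq:rad_up})--(\ref{eq:rad_down}), forces $u_0\equiv 0\equiv u_1$, whence $f=0$.

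The Fredholm reduction is routine once the decomposition (\ref{eq:repr_S0}) is in hand, so the main obstacle is the injectivity argument. It has two delicate points: one must (i) faithfully reconstruct the pair $(u_0,u_1)$ from the algebraic unknown $f$, paying careful attention to the normal-orientation conventions of Section~\ref{MS10} and to the signs appearing in the definition of the generalized Robin data, and (ii) invoke uniqueness for the quasi-periodic Helmholtz transmission problem with vanishing incident data, a classical fact that nevertheless requires the configuration to avoid Wood anomalies for both $k_0$ and $k_1$.
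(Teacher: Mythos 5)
Your proposal is correct and follows essentially the same route as the paper's proof: the splitting $\mathcal{S}^j=\mathcal{S}^j_0+\mathcal{K}^j_0$, the Neumann-series invertibility of the $\mathcal{O}(\varepsilon^2)$-perturbed identity block matrix plus a compact off-diagonal remainder to get Fredholmness of index zero, and then injectivity via reconstruction of $(u_0,u_1)$, the injectivity of $Z_{0,1}^L+Z_{1,0}^L$ from Lemma~\ref{coerc1}, and uniqueness of the homogeneous transmission problem. The extra remarks about sign conventions and Wood anomalies are sensible but do not change the argument.
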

\begin{proof}
  First, using the decompositions $\mathcal{S}^j=\mathcal{S}^j_0+\mathcal{K}^j_0,\ j=0,1$ where $\|\mathcal{S}^j_0\|_{H^{-1/2}(\Gamma_0)\to H^{-1/2}(\Gamma_0)}\lesssim \varepsilon^{2},\ j=0,1$ and $\mathcal{K}^j_0:H^{-1/2}(\Gamma_0)\to H^{-1/2}(\Gamma_0),\ j=0,1$ are compact, it follows that
  \[
  \mathcal{A}=\begin{bmatrix}I & \mathcal{S}^1_0\\ \mathcal{S}^0_0 & I\end{bmatrix}+\begin{bmatrix}0 & \mathcal{K}^0_0 \\ \mathcal{K}^1_0 & 0\end{bmatrix}.
  \]
  Neumann series arguments yield the fact that the matrix operator $\begin{bmatrix}I & \mathcal{S}^1_0\\ \mathcal{S}^0_0 & I\end{bmatrix}$ is invertible in the space $H^{-1/2}(\Gamma_0)\times H^{-1/2}(\Gamma_0)$, while the matrix operator $\begin{bmatrix}0 & \mathcal{K}^0_0 \\ \mathcal{K}^1_0 & 0\end{bmatrix}$ is compact in the same functional space $H^{-1/2}(\Gamma_0)\times H^{-1/2}(\Gamma_0)$. Consequently, the QO DD operator $\mathcal{A}$ is Fredholm of index zero in the space $H^{-1/2}(\Gamma_0)\times H^{-1/2}(\Gamma_0)$, and thus the result of the theorem is established once we prove the injectivity of the operator $\mathcal{A}$. Now let $(\varphi_0,\varphi_1)\in Ker(\mathcal{A})$ and define $u_0$ and $u_1$ be $\alpha$-quasiperiodic radiative solutions of the following Helmholtz boundary value problems
      \begin{eqnarray*}
        \Delta u_0+k_0^2u_0&=&0\quad{\rm in}\ \Omega_0\\
        \partial_{n_0}u_0+Z_{1,0}^Lu_0&=&\varphi_0\quad {\rm on}\ \Gamma_0
      \end{eqnarray*}
      and
 \begin{eqnarray*}
        \Delta u_1+k_1^2u_1&=&0\quad{\rm in}\ \Omega_1\\
        \partial_{n_1}u_1+Z_{0,1}^Lu_1&=&\varphi_1\quad {\rm on}\ \Gamma_0.
 \end{eqnarray*}
 The requirement $(\varphi_0,\varphi_1)\in Ker(\mathcal{A})$ translates into the following system of equations on $\Gamma_0$
 \begin{eqnarray*}
   \partial_{n_0}u_0+Z_{1,0}^Lu_0&=&-\partial_{n_1}u_1+Z_{1,0}^Lu_1\\
   \partial_{n_1}u_1+Z_{0,1}^Lu_1&=&-\partial_{n_0}u_0+Z_{0,1}^Lu_0.
 \end{eqnarray*}
 Using the injectivity of the operator $Z_{1,0}^L+Z_{0,1}^L$, we obtain immediately that $u_0=u_1$ and $\partial_{n_0}u_0=-\partial_{n_1}u_1$ on $\Gamma_0$. Hence, $u_j=0$ in $\Omega_j$ for $j=0,1$, and in conclusion $\varphi_j=0$ on $\Gamma_0$ for $j=0,1$. 
\end{proof}
\begin{remark}\label{flat_interface}
We note that in the case when $\Gamma_0$ is flat, the RtR operators $\mathcal{S}^j, j=0,1$ are actually compact in the space $H^{-1/2}(\Gamma_0)$.
\end{remark}

We turn our attention to the analysis of the QO DD~\eqref{DDM_t} in the case of multiple interfaces separating several layers.

\subsection{Boundary integral representation of RtR maps corresponding to the subdomains $\Omega^{per}_j$ and the invertibility of the QO DD formulation~\eqref{DDM_t}  in the case of several subdomains\label{I_RtR1}}

We begin by expressing the RtR operators $\mathcal{S}^j$ defined in equation~\eqref{RtRboxj_t} via boundary integral operators. We present our derivations in the case of transmission operators $Z^{s,L}_{j-1,j}$ and $Z^{s,L}_{j+1,j}$ defined in equation~\eqref{eq:layered_pm}; analogous results can be established in the case of transmission operators $Z_{j-1,j}^L$ and $Z_{j+1,j}^L$ defined in equation~\eqref{eq:trans_Zj}. We note that the Helmholtz problems~\eqref{eq:H} can be all expressed in the generic form
\begin{eqnarray}\label{eq:Hgen}
  \Delta w+k^2w&=&0\quad{\rm in}\ \Omega^{per}\\
  \partial_{n}w+Z_{j-1,j}^{s,L}\ w&=&g_{t}\quad{\rm on}\ \Gamma_{t}\nonumber\\
  \partial_{n}w+Z_{j+1,j}^{s,L}w&=&g_{b}\quad{\rm on}\ \Gamma_{b}\nonumber
  \end{eqnarray}
where $g_t,g_b$ are $\alpha$-quasiperiodic functions. Thus, the RtR operators $\mathcal{S}^j,1\leq j<N$~\eqref{RtRboxj_t} are all related to the following RtR operator associated with the Helmholtz problems~\eqref{eq:Hgen}:
\begin{equation}\label{RtRboxj_O}
   \mathcal{S}\begin{bmatrix}g_{t}\\g_{b}\end{bmatrix}=\begin{bmatrix}\mathcal{S}_{t,t} & \mathcal{S}_{t,b}\\ \mathcal{S}_{b,t}& \mathcal{S}_{b,b}\end{bmatrix}\begin{bmatrix}g_{t}\\g_{b}\end{bmatrix}:=\begin{bmatrix}(\partial_{n}w-Z_{j,j-1}^{s,L} w)|_{\Gamma_{t}}\\(\partial_{n}w-Z_{j,j+1}^{s,L} w)|_{\Gamma_{b}}\end{bmatrix}.
 \end{equation}
Seeking for the solution $w$ of equations~\eqref{eq:Hgen} in the form
\[
w = SL_{k,t}\varphi_t+SL_{k,b}\varphi_b
\]
where $SL_{k,t}^q$ ($SL_{k,b}^q$) denotes the quasiperiodic single layer potential whose domain of integration in $\Gamma_t$ ($\Gamma_b$), we arrive at the following expression for the RtR operator $\mathcal{S}$: 
\begin{eqnarray}\label{eq:S_layer_j}
  \mathcal{S}=\begin{bmatrix}I &0\\0& I\end{bmatrix}&-&\begin{bmatrix}Z_{j-1,j}^{s,L}+Z_{j,j-1}^{s,L}& 0\\0 & Z_{j+1,j}^{s,L}+Z_{j,j+1}^{s,L}\end{bmatrix}\begin{bmatrix}S^w_{k,t,t} & S^w_{k,b,t}\\ S^w_{k,t,b} & S^w_{k,b,b}\end{bmatrix}\nonumber\\
  &\times&\begin{bmatrix}1/2I+(K_{k,t,t}^w)^\top +Z_{j-1,j}^{s,L} S^w_{k,t,t} & (K^w_{k,b,t})^\top+Z_{j-1,j}^{s,L} S^w_{k,b,t}\\ (K_{k,t,b}^w)^\top+Z_{j+1,j}^{s,L} S^w_{k,t,b} & 1/2I+(K_{k,b,b}^w)^\top +Z_{j+1,j}^{s,L} S^w_{k,b,b}\end{bmatrix}^{-1}.
\end{eqnarray}
We note that in equation~\eqref{eq:S_layer_j}, the subscripts in the notation $S_{k,b,t}^w$ signify that in equation~\eqref{eq:SL} the target point $\mathbf{x}\in\Gamma_t$ and the integration point $\mathbf{y}\in\Gamma_b$, whereas in the notation $S_{k,b,t}^w$ signify that in equation~\eqref{eq:SL} the target point $\mathbf{x}\in\Gamma_t$ and the integration point $\mathbf{y}\in\Gamma_b$; all the other additional subscripts in equation~\eqref{eq:S_layer_j} have similar meanings related to the locations of target and integration points for single and adjoint double layer boundary integral operators. The invertibility of the operators featured in equation~\eqref{eq:S_layer_j} can be established using similar reasoning to that in the proof of Theorem~\ref{inv_A} under similar assumptions on the analyticity of the profiles $g_t$ and $g_b$, and the smallness of the shape perturbation parameter $\varepsilon$. Also, the operators in the block decomposition in equation~\eqref{RtRboxj_O} have the following mapping properties:
\[
\mathcal{S}_{t,b}:H^{-1/2}(\Gamma_b)\to H^{-1/2}(\Gamma_t),\quad \mathcal{S}_{b,t}:H^{-1/2}(\Gamma_t)\to H^{-1/2}(\Gamma_b)\quad\rm{are\ compact}
\]
and
\[
\mathcal{S}_{t,t}=\mathcal{S}_{t,t}^0+\mathcal{S}_{t,t}^1,\quad \|\mathcal{S}_{t,t}^0\|_{H^{-1/2}(\Gamma_t)\to H^{-1/2}(\Gamma_t)}\lesssim \varepsilon^{2},\quad \mathcal{S}_{t,t}^1:H^{-1/2}(\Gamma_t)\to H^{-1/2}(\Gamma_t)\quad\rm{is\ compact}
\]
as well as
\[
\mathcal{S}_{b,b}=\mathcal{S}_{b,b}^0+\mathcal{S}_{b,b}^1,\quad \|\mathcal{S}_{b,b}^0\|_{H^{-1/2}(\Gamma_b)\to H^{-1/2}(\Gamma_b)}\lesssim \varepsilon^{2},\quad \mathcal{S}_{b,b}^1:H^{-1/2}(\Gamma_b)\to H^{-1/2}(\Gamma_b)\quad\rm{is\ compact}.
\]
We are now in the position to prove the following
\begin{theorem}\label{thm_inv_DD}
Assuming that the transmission problem~\eqref{system_t} is well-posed, the profiles  $\widetilde{F}_j(x)$ are all analytic for $0\leq j\leq N$, and that the shape parameter $\varepsilon$ corresponding to the grating profiles $F_j(x)=\varepsilon \widetilde{F}_j(x),\ 0\leq j\leq N$ is small enough. The QO DD operator matrix $\mathcal{A}$ defined in equation~\eqref{eq:mA} is invertible in the space $H^{-1/2}(\Gamma_0)\times H^{-1/2}(\Gamma_0)\times\ldots\times H^{-1/2}(\Gamma_N)\times H^{-1/2}(\Gamma_N)$.
\end{theorem}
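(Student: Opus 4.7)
The plan is to mimic the two-step Fredholm strategy used in Theorem~\ref{eq:inv_DDM_1interface}: first decompose $\mathcal{A}$ as the sum of an invertible operator (close to the identity) and a compact operator, and then establish injectivity using the equivalence of the QO DD formulation with the original transmission problem.

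First I would exploit the structural decompositions of the RtR blocks established just before the statement. Each diagonal block $D_j$ has the form $\begin{bmatrix} I & \mathcal{S}^{j+1}_{j,j} \\ \mathcal{S}^j_{j+1,j+1} & I \end{bmatrix}$, and by the results stated for $\mathcal{S}_{t,t}$ and $\mathcal{S}_{b,b}$ we may write $\mathcal{S}^{j+1}_{j,j}=\mathcal{S}^{0,j+1}_{j,j}+\mathcal{S}^{1,j+1}_{j,j}$ and $\mathcal{S}^{j}_{j+1,j+1}=\mathcal{S}^{0,j}_{j+1,j+1}+\mathcal{S}^{1,j}_{j+1,j+1}$, where the $\mathcal{S}^{0,\cdot}$ parts have $H^{-1/2}\to H^{-1/2}$ operator norm $\lesssim \varepsilon^2$ and the $\mathcal{S}^{1,\cdot}$ parts are compact in $H^{-1/2}$. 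For the off-diagonal blocks $U_j$ and $L_j$, the only nonzero entries are $\mathcal{S}^{j+1}_{j,j+2}$ and $\mathcal{S}^{j+1}_{j+2,j}$, which are exactly the off-diagonal RtR components stated to be compact. Collecting terms, I would decompose
\[
\mathcal{A}=\mathcal{A}_0+\mathcal{K},
\]
where $\mathcal{A}_0$ is a block tridiagonal operator with identity blocks on the main diagonal plus the small-norm contributions $\mathcal{S}^{0,\cdot}$ on the diagonal blocks, and $\mathcal{K}$ collects all compact contributions (both the $\mathcal{S}^{1,\cdot}$ parts of the diagonal blocks and the off-diagonal $U_j, L_j$ blocks).

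Next I would prove invertibility of $\mathcal{A}_0$ on the product space $\prod_j H^{-1/2}(\Gamma_j)\times H^{-1/2}(\Gamma_j)$ by a Neumann series argument: $\mathcal{A}_0 = I + \mathcal{E}$ with $\|\mathcal{E}\|\lesssim \varepsilon^2$ uniformly in the number of layers (the operator $\mathcal{E}$ is block diagonal in this construction). Since $\mathcal{K}$ is compact as a finite block matrix of compact operators between the relevant Sobolev spaces, it follows that $\mathcal{A}=\mathcal{A}_0(I+\mathcal{A}_0^{-1}\mathcal{K})$ is Fredholm of index zero.

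It remains to establish injectivity of $\mathcal{A}$, which I expect to be the main obstacle because it must link the purely local subdomain solutions back to the global transmission problem. Let $f=(f_0,\ldots,f_N)\in\ker\mathcal{A}$, with $f_j=(f_{j,j+1},f_{j+1,j})$. For each $0\le j\le N+1$ I would solve the well-posed subdomain generalized Robin boundary value problems~\eqref{eq:H} (or \eqref{eq:H0} at the endpoints) with data extracted from $f$ to obtain $\alpha$-quasiperiodic fields $u_j$ in $\Omega_j^{per}$ satisfying the local Helmholtz equations. The relations encoded by $\mathcal{A}f=0$ on each interface $\Gamma_j$ amount to
\begin{eqnarray*}
\partial_{n_j}u_j+Z^{s,L}_{j+1,j}u_j &=& -\partial_{n_{j+1}}u_{j+1}+Z^{s,L}_{j+1,j}u_{j+1},\\
\partial_{n_{j+1}}u_{j+1}+Z^{s,L}_{j,j+1}u_{j+1} &=& -\partial_{n_j}u_j+Z^{s,L}_{j,j+1}u_j,
\end{eqnarray*}
for $0\le j\le N$. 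Adding and subtracting, together with the coercivity estimate~\eqref{eq:well-pos_t} applied to $Z^{s,L}_{j,j+1}+Z^{s,L}_{j+1,j}$ (which, as noted in Section~\ref{DDM}, forces this operator to be injective on $H^{1/2}(\Gamma_j)$), I would deduce the standard transmission continuity $u_j=u_{j+1}$ and $\partial_{n_j}u_j=-\partial_{n_{j+1}}u_{j+1}$ on every $\Gamma_j$. The patched field $u$ is then an $\alpha$-quasiperiodic radiative solution of the homogeneous version of~\eqref{system_t}, and the assumed well-posedness of the transmission problem yields $u\equiv 0$, whence $f=0$. This closes the Fredholm argument and completes the proof.
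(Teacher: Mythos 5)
Your proposal is correct and follows essentially the same route as the paper: decompose $\mathcal{A}$ into a near-identity part (identity plus $O(\varepsilon^2)$ diagonal-block contributions) that is invertible by a Neumann series, plus a compact part collecting the smoothing pieces of the diagonal blocks and the off-diagonal $U_j,L_j$ blocks, so that $\mathcal{A}$ is Fredholm of index zero. Your injectivity argument — solving the subdomain Robin problems for kernel data, using the injectivity of $Z_{j,j+1}+Z_{j+1,j}$ to recover the transmission conditions, and invoking well-posedness of~\eqref{system_t} — is exactly the argument the paper invokes by reference to the one-interface case.
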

\begin{proof} Assuming that $\varepsilon$ is smaller than all the radii of convergence of the shape perturbation series of the DtN operators $Y_0$, $Y_{N}$, and $\mathbf{Y}_j,\ 1\leq j\leq N$, we use the results established above to 
\[
\mathcal{S}^j_{\ell,\ell}=\mathcal{S}^{j,0}_{\ell,\ell}+\mathcal{S}^{j,1}_{\ell,\ell},\quad \ell=j-1,j+1
\]
where
\[
 \|\mathcal{S}_{\ell,\ell}^{j,0}\|_{H^{-1/2}(\Gamma_\ell)\to H^{-1/2}(\Gamma_\ell)}\lesssim \varepsilon^{2},\quad \mathcal{S}_{\ell,\ell}^{j,1}:H^{-1/2}(\Gamma_\ell)\to H^{-1/2}(\Gamma_\ell)\quad\rm{is\ compact}.
\]
Similar decomposition can be effected on the RtR operators $\mathcal{S}^0$ and respectively $\mathcal{S}^{N+1}$. Then, we can express the QO DD operator $\mathcal{A}$ in the form
\[
\mathcal{A}=\mathcal{A}^0+\mathcal{A}^1
\]
where
\begin{equation}\label{eq:mAC}
  \mathcal{A}^0:=\begin{bmatrix}
  I & \mathcal{S}^{1,0}_{0,0}& 0 & 0 & \ldots & 0 & 0 & 0 & 0 & \ldots & 0 & 0 \\
  \mathcal{S}^{0,0}_{1,1} & I & 0 & 0 & \ldots & 0 & 0 & 0 & 0 &  \ldots & 0 & 0\\
  0 & 0 & I & \mathcal{S}^{2,0}_{1,1} & \ldots & 0 & 0 & 0 & 0 &  \ldots & 0 & 0\\
  0 & 0 & \mathcal{S}^{1,0}_{2,2} & I & \ldots & 0 & 0 & 0 & 0 &  \ldots & 0 & 0\\
  \ldots & \ldots & \ldots & \ldots & \ldots & \ldots & \ldots & \ldots & \ldots & \ldots & \ldots\\
  0 & 0 & 0 & 0 & \ldots & 0 & I & \mathcal{S}^{j+1,0}_{j,j} & 0 & \ldots  & 0 & 0\\
  0 & 0 & 0 & 0 & \ldots & 0 & \mathcal{S}^{j,0}_{j+1,j+1} & I & 0 &  \ldots & 0 & 0\\
  \ldots & \ldots & \ldots & \ldots & \ldots & \ldots & \ldots & \ldots & \ldots & \ldots & \ldots & \ldots\\
  \ldots & \ldots & \ldots & \ldots & \ldots & \ldots & \ldots & \ldots & \ldots & \ldots & I & \mathcal{S}^{N+1,0}_{N,N}\\
 \ldots & \ldots & \ldots & \ldots & \ldots & \ldots & \ldots & \ldots & \ldots & \ldots & \mathcal{S}^{N,0}_{N+1,N+1} & I\\
  \end{bmatrix}.\nonumber
\end{equation}
and $\mathcal{A}^1$ is compact in the space $H^{-1/2}(\Gamma_0)\times H^{-1/2}(\Gamma_0)\times\ldots\times H^{-1/2}(\Gamma_N)\times H^{-1/2}(\Gamma_N)$. Given the bounds established above on the operators that are non-diagonal entries in the matrix operator $\mathcal{A}^0$, we conclude that the operator $\mathcal{A}^0$ is invertible in the space $H^{-1/2}(\Gamma_0)\times H^{-1/2}(\Gamma_0)\times\ldots\times H^{-1/2}(\Gamma_N)\times H^{-1/2}(\Gamma_N)$. Thus, the invertibility of the operator $\mathcal{A}$ is equivalent to its injectivity. The latter, in turn, follows from the well-posedness of the transmission problem~\eqref{system_t} just as in the proof of Theorem~\ref{inv_A}.
\end{proof}

\subsection{Boundary integral representation of RtR maps corresponding to strip subdomains and the invertibility of the QO DD formulation~\eqref{DDM_t_final}\label{I_RtR}}

We present a representation of the RtR operators associated with the Helmholtz transmission boundary value problem~\eqref{DDM_t_final_well_p}. We assume for simplicity that none of the wavenumbers $k_{j-1}$ and $k_j$ are Wood frequencies. Then, we look for the solution $v_j$ of the boundary value problem 
\begin{eqnarray*}
  \Delta v_j +k_j(x)^2 v_j &=&0\qquad {\rm in}\quad \Omega_j^{\flat,per},\\
  k_0(x):=k_0, k_j(x)&:=&\begin{cases}k_{j-1},\ x_2>\overline{F}_{j-1}+F_{j-1}(x_1)\\ k_j,\ x_2<\overline{F}_{j-1}+F_{j-1}(x_1)\end{cases},\nonumber\\
  \left[v_j\right]=0,\quad \left[\partial_{n_j}v_j\right]&=&0\quad{\rm on}\ \Gamma_{j-1}\nonumber\\
  \partial_{x_2}v_{j}+Z_{j-1,j}^\flat v_{j}&=&g^\flat_{j,j-1}\ {\rm on}\ \Sigma^\flat_{j,j-1},\nonumber\\
  -\partial_{x_2}v_{j}+Z_{j+1,j}^\flat v_{j}&=&g^\flat_{j,j+1}\ {\rm on}\ \Sigma^\flat_{j+1,j},\nonumber
\end{eqnarray*}
in the form
\begin{equation}\label{eq:repr_vj}
  v_j(\mathbf{x})=\begin{cases}[SL_{k_{j-1},\Sigma^\flat_{j,j-1}}\varphi_{j,j-1}](\mathbf{x})+[SL_{k_{j-1},\Gamma_{j-1}}\varphi](\mathbf{x})+[DL_{k_{j-1},\Gamma_{j-1}}\psi](\mathbf{x}),& x_2>\overline{F}_{j-1}+F_{j-1}(x_1)\\
  [SL_{k_{j},\Sigma^\flat_{j+1,j}}\varphi_{j+1,j}](\mathbf{x})+[SL_{k_{j},\Gamma_{j-1}}\varphi](\mathbf{x})+[DL_{k_{j},\Gamma_{j-1}}\psi](\mathbf{x}),& x_2<\overline{F}_{j-1}+F_{j-1}(x_1)
  \end{cases}
  \end{equation}
where the double layer potentials on the interface $\Gamma_{j-1}$ are defined with respect to the unit normal $n_{j-1}=\frac{(F'_{j-1}(x_1),-1)}{(1+(F'_{j-1}(x_1))^2)}$ pointing towards the domain $\Omega_j$. The enforcement of boundary conditions leads to the following system of BIEs

\begin{eqnarray}\label{eq:BIE_system_flat}
  \left(\frac{1}{2}I+K^\top_{k_{j-1},\Sigma^\flat_{j,j-1}}+Z^\flat_{j-1,j}S_{k_{j-1},\Sigma^\flat_{j,j-1}}\right)\varphi_{j,j-1}+
  \left(\partial_{x_2} SL_{k_{j-1},\Gamma_{j-1},\Sigma^\flat_{j-1,j}}+Z^\flat_{j-1,j}SL_{k_{j-1},\Gamma_{j-1},\Sigma^\flat_{j-1,j}}\right)\varphi\nonumber\\
    +\left(\partial_{x_2} DL_{k_{j-1},\Gamma_{j-1},\Sigma^\flat_{j-1,j}}+Z^\flat_{j-1,j}DL_{k_{j-1},\Gamma_{j-1},\Sigma^\flat_{j-1,j}}\right)\psi=g_{j,j-1}^\flat\nonumber\\
  \left(\frac{1}{2}I+K^\top_{k_{j},\Sigma^\flat_{j+1,j}}+Z^\flat_{j+1,j}S_{k_{j},\Sigma^\flat_{j+1,j}}\right)\varphi_{j+1,j}+\left( -\partial_{x_2} SL_{k_{j},\Gamma_{j-1},\Sigma^\flat_{j+1,j}}+Z^\flat_{j+1,j}SL_{k_{j},\Gamma_{j-1},\Sigma^\flat_{j+1,j}}\right)\varphi\nonumber\\
  +\left( -\partial_{x_2} DL_{k_{j},\Gamma_{j-1},\Sigma^\flat_{j+1,j}}+Z^\flat_{j+1,j}DL_{k_{j},\Gamma_{j-1},\Sigma^\flat_{j+1,j}}\right)\psi=g_{j+1,j}^\flat\nonumber\\
  \left(\partial_{n_j}SL_{k_{j-1},\Sigma^\flat_{j,j-1},\Gamma_{j-1}}\right)\varphi_{j,j-1} -\left(\partial_{n_j}SL_{k_{j},\Sigma^\flat_{j+1,j},\Gamma_{j-1}}\right)\varphi_{j+1,j}
  +\left( I + K^\top_{k_{j-1},\Gamma_{j-1}}-K^\top_{k_{j},\Gamma_{j-1}}\right)\varphi\nonumber\\
  +\left( N_{k_{j-1},\Gamma_{j-1}}-N_{k_{j},\Gamma_{j-1}}\right)\psi=0\nonumber\\
  -\left(SL_{k_{j-1},\Sigma^\flat_{j,j-1},\Gamma_{j-1}}\right)\varphi_{j,j-1} +\left(SL_{k_{j},\Sigma^\flat_{j+1,j},\Gamma_{j-1}}\right)\varphi_{j+1,j}
  +\left( S_{k_j,\Gamma_{j-1}}-S_{k_{j-1},\Gamma_{j-1}}\right)\varphi\nonumber\\
  +\left( I + K_{k_j,\Gamma_{j-1}}-K_{k_{j-1},\Gamma_{j-1}}\right)\psi=0\nonumber\\
\end{eqnarray}
which can be shown easily to be equivalent to the Helmholtz transmission problem~\eqref{DDM_t_final_well_p}.
In addition, it is relatively straightforward to show using the BIE~\eqref{eq:BIE_system_flat} that RtR operator $S^{\flat,j}$ associated with the Helmholtz boundary value~\eqref{DDM_t_final_well_p} and explicitly defined in equation~\eqref{RtRboxj_t_flat} are compact operators in the space $H^{-1/2}(\Sigma^\flat_{j,j-1})\times H^{-1/2}(\Sigma_{j+1,j}^\flat)$ under the assumption that the periodic function $F_{j-1}$ is $C^2$ or better. Thus, the block operators in the representation $S^{\flat,j}=\begin{bmatrix}\mathcal{S}^{\flat,j}_{j-1,j-1} & \mathcal{S}^{\flat,j}_{j-1,j+1}\\ \mathcal{S}^{\flat,j}_{j+1,j-1} & \mathcal{S}^{\flat,j}_{j+1,j+1}\end{bmatrix}$ are themselves compact operators in appropriate functional spaces. In conclusion, the DD operator $\mathcal{A}^\flat$ corresponding to the QO DD formulation~\eqref{DDM_t_final} is a compact perturbation of the identity. Thus, its invertibility can be established analogously to that of the DD operator in Theorem~\ref{thm_inv_DD} under the assumption that the original Helmholtz transmission problem~\eqref{system_t} is well-posed.  We note that the well-posedness of the QO DD formulation~\eqref{DDM_t_final} holds regardless of the roughness of the profiles $\Gamma_j$, as long as the flat interfaces do not intersect the interfaces of material discontinuity.

We present in the next section numerical results that showcase the iterative behavior of solvers based on the QO DD formulations considered in this paper. 

\section{Numerical results\label{num}}

\subsection{Nystr\"om discretization}

Our numerical solution of equations~\eqref{ddm_t_exp} and~\eqref{ddm_t_exp_final} relies on Nystr\"om discretizations of the boundary integral operators that feature in the computation of the RtR operators given in Section~\ref{rtr}. A key ingredient in the evaluation of quasi-periodic boundary integral operators is the efficient evaluation of quasi-periodic Green function $G^q_k$ defined in equation~\eqref{eq:qper_G}. For frequencies that are away from Wood frequencies, we employ the recently introduced Windowed Green Function Method~\cite{bruno2016superalgebraically,bruno2017three,Delourme}. Specifically, let $\chi(r)$ be a smooth cutoff function equal to $1$ for $r<1/2$ and equal to $0$ for $r>1$ and define the windowed Green functions
\begin{equation}\label{eq:qper_GA}
  G^{q,A}_k(x_,x_2)=\sum_{m\in\mathbb{Z}} e^{-i\alpha md}G_k(x_1+md,x_2)\chi(r_m/A),\quad r_m=((x_1+md)^2+x_2^2)^{1/2}.
\end{equation}
The functions $G^{q,A}_k$ converge superalgebraically fast to $G^q_k$ as $A\to\infty$ when $k$ is not a Wood frequency~\cite{bruno2016superalgebraically,bruno2017three,Delourme}. Consequently, we make use of the functions $G^{q,A}_k$ for large $A$ in the definition of the quasiperiodic boundary integral operators. In the case of wavenumber $k$ which are Wood frequencies, we use shifted Green functions and their associated boundary integral operators~\cite{Delourme}. Given that the functions $G^{q,A}_k$  exhibit the same singularities as the free-space Green's functions $G_k$, the four quasiperiodic boundary integral operators are discretized using  trigonometric collocation and the singular quadratures of Martensen-Kussmaul (MK) that rely on logarithmic splitting of the kernels~\cite{kusmaul,martensen}. The full description of these discretizations is provided in~\cite{Delourme}. Since the transmission operators considered in this paper are Fourier multipliers, their discretization is straightforward in the context of the trigonometric interpolation framework that is the basis of the Nystr\"om methods described above. 

In conclusion, using Nystr\"om discretizations of the boundary integral operators based on trigonometric interpolation with $n$ equi-spaced points, we produce $\mathbb{C}^{n\times n}$ Nystr\"om discretization matrices of the four quasi-periodic boundary integral operators. Using Nystr\"om discretization matrices of quasi-periodic boundary integral operators within the integral representations of the RtR operators presented in Sections~\ref{I_RtR0} and~\ref{I_RtR1} (cf. formulas~\eqref{eq:calc_S0} and~\eqref{eq:S_layer_j}), we obtain Nystr\"om discretization matrices $\mathcal{S}^{j,n}$ of the corresponding RtR operators $\mathcal{S}^j$ for various choices of transmission operators. For instance, in the case when the transmission operators $Z_{j,j-1}^L$ and $Z_{j,j+1}^L$ are defined as in equations~\eqref{eq:trans_Zj}, the RtR Nystr\"om discretization matrices $\mathcal{S}^{j,n}$ of the continuous RtR operators $\mathcal{S}^j$ are expressed in $\mathbb{C}^{n\times n}$ block form 
\begin{equation*}
   \mathcal{S}^{j,n}=\begin{bmatrix}\mathcal{S}^{j,n}_{j-1,j-1} & \mathcal{S}^{j,n}_{j-1,j}\\ \mathcal{S}^{j,n}_{j,j-1} & \mathcal{S}^{j,n}_{j,j}\end{bmatrix}.
\end{equation*}
We note that RtR representation formulas~\eqref{eq:calc_S0} and~\eqref{eq:S_layer_j} require inverting boundary integral operators. Inverting their Nystr\"om discretization matrices can be performed in practice via direct solvers (when warranted by the size of the problem) or more generally by iterative solvers such as GMRES. This procedure leads to the construction of a Nystr\"om discretization matrix of the continuous DD matrix $\mathcal{A}$ defined in equation~\eqref{eq:mA} which is expressed in the block form 
\begin{equation}\label{eq:mAn}
\mathcal{A}_n=\begin{bmatrix}
D_{0}^n & U_{0}^n & 0_n &\ldots & 0_n\\
L_{0}^n & D_{1}^n & U_{1}^n & \ldots & 0_n\\
\ldots & \ldots & \ldots & \ldots & \ldots\\
\ldots &L_{j-1}^n & D_{j}^n & U_{j}^n & \ldots\\
\ldots & \ldots & \ldots & \ldots & \ldots\\
\ldots & \ldots & L_{N-2}^n & D_{N-1}^n & U_{N-1}^n\\
\ldots & \ldots & \ldots & L_{N-1}^n & D_{N}^n\\
\end{bmatrix}
\end{equation}
where
\begin{equation}\label{eq:components_n}
D_{j}^n:=\begin{bmatrix} I_n & \mathcal{S}^{j+1,n}_{j,j} \\ \mathcal{S}^{j,n}_{j+1,j+1}& I_n \end{bmatrix}\quad U^n_j=\begin{bmatrix} \mathcal{S}^{j+1,n}_{j,j+2} & 0_n \\ 0_n & 0_n\end{bmatrix}\quad L_j^n=\begin{bmatrix}0_n & 0_n \\ 0_n & \mathcal{S}^{j+1,n}_{j+2,j} \end{bmatrix}.
\end{equation}
Similarly, the Nystr\"om discretization matrices of the DD operators~\eqref{eq:mA} corresponding to transmission operators $Z_{j,j-1}^{s,L}$ and $Z_{j,j+1}^{s,L}$ defined in equations~\eqref{eq:layered_pm} are denoted by $\mathcal{A}_n^s$; also, the Nystr\"om discretization matrix corresponding to the stripe subdomain DD formulation~\eqref{DDM_t_final} is denoted by $\mathcal{A}^\flat_n$. Neither of the matrices $\mathcal{A}_n$, $\mathcal{A}_n^s$, nor $\mathcal{A}^\flat_n$ are stored in practice. Instead, the solution of the discrete DD systems feturing these matrices is performed via Krylov subspace iterative solvers such as GMRES. The main scope of the numerical results presented in this paper is to present the performance of GMRES solvers involving the DD discretization matrices $\mathcal{A}_n$,  $\mathcal{A}_n^s$, and $\mathcal{A}^\flat_n$.

As it is well documented, the choice of the transmission operators in DD formulations of Helmholtz transmission problems is motivated by optimizing the exchange of information between adjacent layers/subdomains. However, for high-frequency/high-contrast periodic layered media, there is significant global exchange of information amongst all layers, which cannot be captured by local transmission operators alone, regardless of how optimized these transmission operators are. One widely used remedy to deal with the global inter-layer communication is based on sweeping preconditioners. Sweeping preconditioners achieve an approximate block LU factorization of the DD matrix $\mathcal{A}_n$ (or $\mathcal{A}_n^s$ and $\mathcal{A}^\flat_n$ for that matter). In the case of DD for layered media, the sweeping preconditioners can be easily constructed on the basis of a very elegant matrix interpretation~\cite{vion2014double} which we describe briefly next. The \emph{exact} LU factorization of the block tridiagonal matrix $\mathcal{A}_n$ takes on the form
\[
\mathcal{A}_n=\begin{bmatrix}T_0 & \ldots & \ldots & \ldots\\ L_0^n & T_1 &\ldots &\ldots\\ \ldots & \ldots & \ldots & \ldots\\ \ldots & L_{N-2}^n & T_{N-1} &0_n  \\ \ldots & \ldots & L_{N-1}^n & T_N\end{bmatrix} \ \begin{bmatrix}I_n & T_0^{-1}U_0^n & \ldots & \ldots \\ 0_n & I_n & T_1^{-1}U_1^n &\ldots \\ \ldots & \ldots & \ldots &\ldots \\ \ldots & \ldots & I_n & T_{N-1}^{-1}U_{N-1}^n\\ \ldots & \ldots & \ldots & I_n\end{bmatrix}
\]
where
\begin{eqnarray*}
T_0&=&D_0^n\\
T_j&=&D_j^n-L_{j-1}^n T_{j-1}^{-1}U_{j-1}^n,\ 1\leq j.
\end{eqnarray*}
An approximate LU factorization of the matrix $\mathcal{A}_n$ can be derived on the premise that optimal transmission operators ought to act like perfectly transparent boundary conditions. This would entail that the block operators $\mathcal{S}^{j,n}_{j+1,j+1}$ and $\mathcal{S}^{j,n}_{j-1,j-1}$ be identically zero, which means that all the diagonal blocks $D_j$ are approximated by the identity matrix $I_n$. Given that $L_{j-1}^n U_{j-1}^n=0_n$,  a very simple approximate LU factorization of the matrix $\mathcal{A}_n$ is provided by
\begin{equation}\label{eq:mBn}
\mathcal{A}_n\approx\mathcal{B}_n:=\begin{bmatrix}I_n & \ldots & \ldots & \ldots\\ L_0^n & I_n &\ldots &\ldots\\ \ldots & \ldots & \ldots & \ldots\\ \ldots & L_{N-2}^n & I_n &0_n  \\ \ldots & \ldots & L_{N-1}^n & I_n\end{bmatrix} \ \begin{bmatrix}I_n & U_0^n & \ldots & \ldots \\ 0_n & I_n & U_1^n &\ldots \\ \ldots & \ldots & \ldots &\ldots \\ \ldots & \ldots & I_n & U_{N-1}^n\\ \ldots & \ldots & \ldots & I_n\end{bmatrix}.
\end{equation}
 Clearly, computing the inverse $\mathcal{B}_n^{-1}$ (or actually solving $\mathcal{B}_nf_n=g_n$) is straightforward as it does not involve any inversions of (smaller) block matrices. Given that the computation of $\mathcal{B}_n^{-1}$ involves a downward sweep followed by an upward sweep, the use of $\mathcal{B}_n^{-1}$ as a preconditioner to $\mathcal{A}_n$ is referred to as double sweep preconditioning. 
 \begin{remark}
 The exact LU factorization can also be employed for the solution of QO DD linear systems involving the matrix $\mathcal{A}_n$ provided that the RtR discretization matrices $\mathcal{S}^{j,n},\ 0\leq j\leq N$ are constructed---see~\cite{perez2018domain} for details of such a direct DD approach. The iterative approach proposed in this paper is more flexible, as the RtR matrices  $\mathcal{S}^{j,n},\ 0\leq j\leq N$ need not be assembled. Furthermore, once we compute the action of the matrix $\mathcal{A}_n$ on a given  vector, matrix inversions are no longer needed. 
 \end{remark}
 We present in the next section of variety of numerical results that highlight the iterative behavior of the QO DD formulations of quasi-periodic transmission problems in layered media, as well as the effectiveness of the sweeping preconditioners in the case of large numbers of layers. 

\subsection{QO DD solvers and sweeping preconditioners}
  
  We present in this section various numerical examples that illustrate the iterative behavior of the QO DD solvers using sweeping preconditioners. We consider both smooth and Lipschitz grating profiles that exhibit various degrees of roughness---as measured by the ratio between the height and the period, as well as by the oscillatory nature of the profile. Specifically, we consider a simple sinusoidal grating profile $\Gamma_0$ described by $x_2=\varepsilon \widetilde{F}_0(x_1),\ \widetilde{F}_0(x_1):=2.5\cos{x_1}$ for various values of $\varepsilon$, and the subsequent profiles $\Gamma_\ell$ being simple down shifted versions of the first profile, that is the grating $\Gamma_\ell$ is given by $x_2=F_\ell(x_1), F_\ell(x_1):=-\ell H + 2.5\varepsilon\cos{x_1}, 0\leq \ell\leq N,\ 0<H$. We also consider rougher profiles described in the form $x_2=\varepsilon \widetilde{F}_0(x_1),\ \widetilde{F}_0(x_1)=\pi(0.4\cos(x_1)-0.2\cos(2x_1)+0.4\cos(3x_1))$, and subsequent gratings profiles $F_\ell(x_1)$ that are obtained by vertically shifting the profile $F_0(x_1)$. Clearly, the parameter $\varepsilon$ constitutes a measure of the interface roughness. In addition, we consider Lipschitz grating profiles depicted in Figure~\ref{fig:Lipschitz} of period $2\pi$ and height $\varepsilon$ (note that the second profile in Figure~\ref{fig:Lipschitz} is not the graph of a $2\pi$ periodic function). 

  In most of the numerical results in this section we report the numbers of iterations required by the QO DD solvers to reach relative GMRES residuals of $10^{-4}$. Specifically, we used GMRES to solve the linear systems corresponding to discretization matrices $\mathcal{A}_n$ and $\mathcal{A}_n^s$ corresponding to the QO DD formulation~\eqref{eq:mA} with transmission operators defined in equations~\eqref{eq:trans_Zj} and~\eqref{eq:layered_pm} respectively, and $\mathcal{A}_n^\flat$ corresponding to the QO DD formulation with stripe subdomains~\eqref{DDM_t_final}. We emphasize one more time that none of the aforementioned QO DD discretization matrices is stored in practice. We also specify in the table headers the various approximation orders $L$ in the definition of the transmission operators ~\eqref{eq:trans_Zj} and~\eqref{eq:layered_pm} that enter the QO DD formulations~\eqref{eq:mA}; for instance, we specify $\mathcal{A}_n/\mathcal{A}_n^s,\ L=0$ and respectively $\mathcal{A}_n/\mathcal{A}_n^s,\ L=2$. As previously discussed, higher values of the approximation parameter $L$ leads not only to more cumbersome calculations of transmission operators, but more importantly to significant subtractive cancellations. We also investigate the effectiveness of the sweeping preconditioner applied to the QO DD discretization matrices; in this case we present the numbers of GMRES iterations needed after application of the sweeping preconditioner under the table header $\mathcal{B}_n^{-1}\mathcal{A}_n$ (and its analogues $(\mathcal{B}_n^s)^{-1}\mathcal{A}_n^s$ and $(\mathcal{B}_n^\flat)^{-1}\mathcal{A}_n^\flat$). We emphasize that the structure of the matrices $\mathcal{B}_n$ cf. equation~\eqref{eq:mBn} (and also $\mathcal{B}_n^s$ and $\mathcal{B}_n^\flat$) allows for calculations of their inverses $\mathcal{B}_n^{-1}$ (as well as $(\mathcal{B}_n^s)^{-1}$ and $(\mathcal{B}_n^\flat)^{-1}$) that do not require inverting block submatrices.

In the numerical experiments presented here we chose discretization sizes $n$ and windowing parameters $A$ in the definition of the quasi-periodic Green function $G_k^{q,A}$ in equation~\eqref{eq:qper_GA} so that the solutions produced by DD based on Nystr\"om discretizations of the RtR operators exhibit accuracies of the order $10^{-4}$ (or better) as measured by conservation of energy metrics. Specifically, we selected the windowing parameter $A=120$ and the discretization size $n=256$ in all the results presented in this section, with the exception of the experiments involving perfectly conducting inclusions, where we chose a larger windowing parameter $A=300$. The Nystr\"om discretization matrices of the RtR maps were produced following the calculations presented in Section~\ref{rtr} using direct linear algebra solvers to invert the discretization matrices corresponding to boundary integral operators. In all the numerical results presented in this section we considered normal incidence, that is the quasi-periodic parameter $\alpha=0$. Qualitatively similar results are obtained for other values of $\alpha$. Finally, unless specified, the wavenumbers considered in the numerical experiments are not Wood frequencies.  
  
  \begin{figure}
\centering
\includegraphics[scale=0.8]{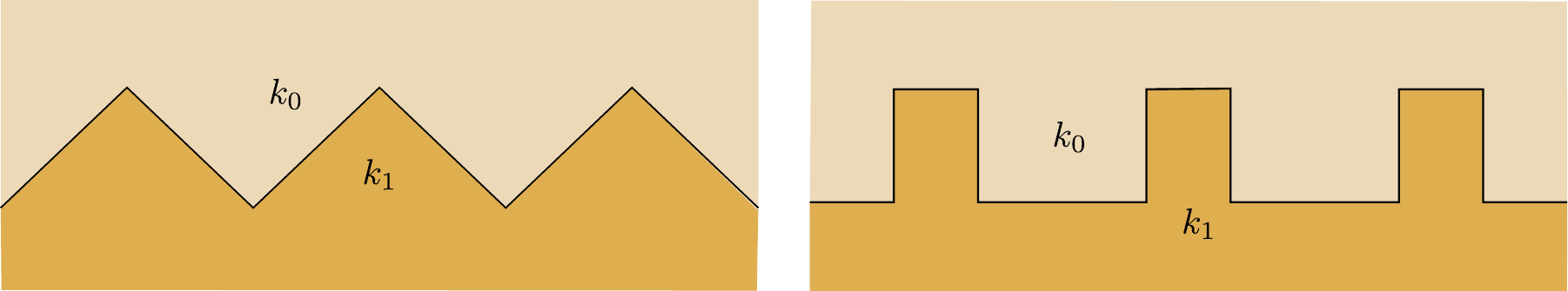}
\caption{Lipschitz grating profiles of period $2\pi$ and height $\varepsilon$.}
\label{fig:Lipschitz}
  \end{figure}

  \emph{Two layers} We start the presentation of our numerical results with the case of two semi-infinite layers separated by a periodic interface. We present in Table~\ref{comp1a} numbers of GMRES iterations required by the QO DD Nystr\"om discretization matrices $\mathcal{A}_n$ to reach GMRES relative residuals of $10^{-6} $ in the case of a deep, smooth grating interfaces separating two high-contrast media. Commensurate energy errors were produced by the Nystr\"om discretizations of the DD linear system. The wavenumbers considered in these results correspond to periodic transmission problems of periods that consist of $5,10,20$ and respectively $80$ wavelengths. We remark that using transmission operators $Z^2_{0,1}$ and $Z^2_{1,0}$ in the DD algorithm gives rise  to numbers of GMRES iterations that scale very mildly with respect to the increasing frequencies. We continue in Table~\ref{comp1aa} with numerical examples concerning a deep Lipschitz grating separating two high-contrast media. In the case of Lipschitz interfaces, we used transmission operators $Z^0_{0,1}$ and $Z^0_{1,0}$ respectively; we observed that the use of higher-order transmission operators $Z^L_{0,1}$ and $Z^L_{1,0}$ with $1\leq L\leq 2$ does not lead to improved iterative convergence of the QO DD solvers.
  \begin{table}
   \begin{center}
     \resizebox{!}{1.1cm}
{   
\begin{tabular}{|c|c|c|c|c|c|c|c|}
  \hline
  \multicolumn{2}{|c|} {$k_0=1.3, k_1=4.3$}& \multicolumn{2}{c|} {$k_0=2.3, k_1=8.3$}& \multicolumn{2}{c|} {$k_0=4.3, k_1=16.3$}& \multicolumn{2}{c|} {$k_0=16.3, k_1=64.3$}\\
\cline{1-8}
$\mathcal{A}_n,L=0$ & $\mathcal{A}_n,L=2$ & $\mathcal{A}_n, L=0$ & $\mathcal{A}_n,L=2$ & $\mathcal{A}_n, L=0$ & $\mathcal{A}_n,L=2$& $\mathcal{A}_n, L=0$ & $\mathcal{A}_n,L=2$\\
\hline
14 & 12 & 16 & 12 & 19 & 14 & 21 & 14\\
\hline
\hline
19 & 17 & 22 & 15 & 24 & 17 & 28 & 18\\
\hline
\end{tabular}
}
\caption{Numbers of GMRES iterations required by QO DD formulations to reach relative residuals of $10^{-6}$ for configurations consisting of $2$ layers, where the interface $\Gamma_0$ is given by deep grating profiles $F_\ell(x_1)=2.5\cos{x_1}$ (top panel) and $F_0(x_1)=2.5 \pi(0.4\cos(x_1)-0.2\cos(2x_1)+0.4\cos(3x_1))$ (bottom panel), and various values of wavenumbers $k_\ell, \ell=0,1$. \label{comp1a}}
\end{center}
  \end{table}

  \begin{table}
   \begin{center}
     \resizebox{!}{1.1cm}
{   
\begin{tabular}{|c|c|c|c|}
  \hline
  \multicolumn{1}{|c|} {$k_0=1.3, k_1=4.3$}& \multicolumn{1}{c|} {$k_0=2.3, k_1=8.3$}& \multicolumn{1}{c|} {$k_0=4.3, k_1=16.3$}& \multicolumn{1}{c|} {$k_0=16.3, k_1=64.3$}\\
\cline{1-4}
$\mathcal{A}_n,L=0$ & $\mathcal{A}_n, L=0$ & $\mathcal{A}_n, L=0$ & $\mathcal{A}_n, L=0$ \\
\hline
15 & 16 & 18 & 20 \\
\hline
\hline
17 & 19 & 21 & 25\\
\hline
\end{tabular}
}
\caption{Numbers of GMRES iterations required by QO DD formulations to reach relative residuals of $10^{-6}$ for configurations consisting of $2$ layers, where the interface $\Gamma_0$ is given by the Lipschitz grating profiles depicted in Figure~\ref{fig:Lipschitz} with height/period =1 (top panel corresponds to the triangle grating and the bottom panel corresponds to the lamellar grating), and various values of wavenumbers $k_\ell, \ell=0,1$. \label{comp1aa}}
\end{center}
  \end{table}

  \emph{Three layers.} We devote the next set of results to configurations consisting of three layers separated by two periodic interfaces. We present in Table~\ref{comp7aa} numbers of GMRES iterations required by the QO DD discretization matrices $\mathcal{A}_n$ to reach relative residuals of $10^{-4}$ for increasingly rougher (yet smooth) grating profiles separating high-contrast periodic layers. We remark that for small values of the roughness parameter $\varepsilon$ (i.e. $\varepsilon=0.1,\ 0.5$), the numbers of iterations do not appear to depend on the increased contrast.  For larger values of the parameter $\varepsilon$ (i.e. $\varepsilon=1$), the numbers of iterations grow with the frequency, yet the growth rate is modest. We also point out that the use of transmission operators $Z_{j,j+1}^2$ (which are higher-order approximations of the DtN operators) appears to be beneficial to the iterative behavior of the QO DD algorithm.
  \begin{table}
   \begin{center}
     \resizebox{!}{1.1cm}
{   
\begin{tabular}{|c|c|c|c|c|c|c|c|c|c|c|c|c|}
  \hline
  $\varepsilon$ & \multicolumn{4}{c|} {$k_0=1.3, k_1=4.3, k_2=16.3$}& \multicolumn{4}{c|} {$k_0=2.3, k_1=8.3, k_2 =32.3$}& \multicolumn{4}{c|} {$k_0=4.3, k_1=16.3, k_3=64.3$}\\
\cline{1-13}
& $\mathcal{A}_n,L=0$ & $\mathcal{B}_n^{-1}\mathcal{A}_n,L=0$ & $\mathcal{A}_n,L=2$ & $\mathcal{B}_n^{-1}\mathcal{A}_n,L=2$ & $\mathcal{A}_n,L=0$ & $\mathcal{B}_n^{-1}\mathcal{A}_n,L=0$ & $\mathcal{A}_n,L=2$ & $\mathcal{B}_n^{-1}\mathcal{A}_n,L=2$& $\mathcal{A}_n,L=0$ & $\mathcal{B}_n^{-1}\mathcal{A}_n,L=0$ & $\mathcal{A}_n,L=2$ & $\mathcal{B}_n^{-1}\mathcal{A}_n,L=2$ \\
\hline
 0.1 & 9 & 9 & 9 & 9 & 9 & 9 & 9 & 9 & 9 & 9 & 9 & 9\\
\hline
0.5 & 13 & 11 & 11 & 11 & 14 & 12 & 11 & 11 & 14 & 12 & 11 & 11\\
\hline
1 & 20 & 17 & 16 & 14 & 23 & 19 & 16  & 14 & 27 & 23 & 19 & 16\\
\hline
\hline
0.1 & 10 & 9 & 10 & 9 & 11 & 11 & 11 & 11 & 10 & 9 & 9 & 9\\
\hline
0.5 & 18 & 15 & 13 & 11 & 18 & 15 & 14 & 13 & 19 & 15 & 17 & 14\\
\hline
1 & 30 & 25 & 20 & 19 & 38 & 33 & 22 & 21 & 56 & 49 & 24 & 22\\
\hline
\end{tabular}
}
\caption{Numbers of GMRES iterations required by QO DD formulations to reach relative residuals of $10^{-4}$ for configurations consisting of $3$ layers, where the interfaces $\Gamma_\ell,0\leq \ell\leq 1$ are given by grating profiles $F_\ell(x_1)=-\ell H+2.5\varepsilon\cos{x_1}, H=3.3, 0\leq \ell\leq 1$ (top panel) and $F_\ell(x_1)=-\ell H+2.5\pi\varepsilon(0.4\cos(x_1)-0.2\cos(2x_1)+0.4\cos(3x_1)), H=3.3, 0\leq \ell\leq 1$ (bottom panel) for various values of the height parameter $\varepsilon$, under normal incidence, and various values of wavenumbers $k_\ell$. \label{comp7aa}}
\end{center}
  \end{table}

  \begin{figure}
\centering
\includegraphics[scale=0.7]{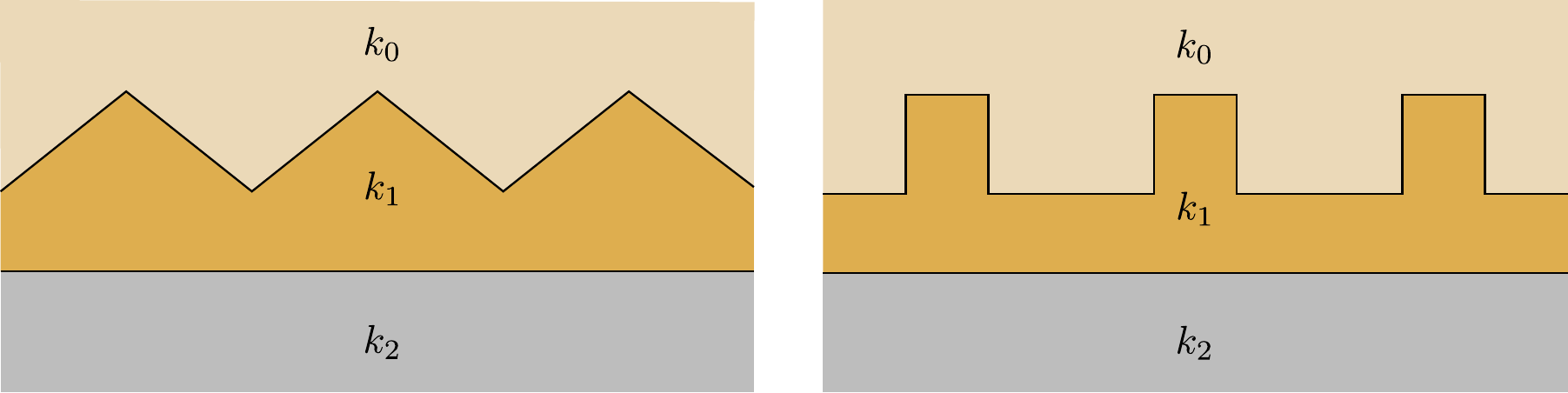}
\caption{Three layer configurations with Lipschitz upper layer grating profiles of period $2\pi$ and height $\varepsilon$.}
\label{fig:Lipschitz_three_layers}
  \end{figure}

  \begin{table}
   \begin{center}
     \resizebox{!}{1.1cm}
{   
\begin{tabular}{|c|c|c|c|c|c|c|}
  \hline
  $\varepsilon$ & \multicolumn{2}{c|} {$k_0=1.3, k_1=4.3, k_2=16.3$}& \multicolumn{2}{c|} {$k_0=2.3, k_1=8.3, k_2 =32.3$}& \multicolumn{2}{c|} {$k_0=4.3, k_1=16.3, k_3=64.3$}\\
\cline{1-7}
& $\mathcal{A}_n,L=0$ & $\mathcal{B}_n^{-1}\mathcal{A}_n,L=0$ & $\mathcal{A}_n,L=0$ & $\mathcal{B}_n^{-1}\mathcal{A}_n,L=0$ & $\mathcal{A}_n,L=0$ & $\mathcal{B}_n^{-1}\mathcal{A}_n,L=0$ \\
\hline
 0.25 & 15 & 13 & 18 & 15 & 22 & 18\\
\hline
1.25 & 16 & 14 & 18 & 16 & 21 & 18\\
\hline
2.5 & 18 & 16 & 19 & 17 & 21 & 19\\
\hline
\hline
0.25 & 18 & 17 & 21 & 18 & 25 & 23\\
\hline
1.25 & 18 & 17 & 21 & 18 & 26 & 23\\
\hline
2.5 & 18 & 17 & 25 & 22 & 28 & 25 \\
\hline
\end{tabular}
}
\caption{Numbers of GMRES iterations required by QO DD formulations to reach relative residuals of $10^{-4}$ for configurations consisting of $3$ layers as depicted in Figure~\ref{fig:Lipschitz_three_layers}. The smallest distance between the grating profiles $\Gamma_0$ and $\Gamma_1$ was taken to be equal to $1.3$. The results corresponding to the top triangular grating are presented in the top panel, and the results corresponding to the top lamellar grating are presented in the bottom panel. We considered various values of the height parameter $\varepsilon$ for the top grating profiles. for various values of the height parameter $\varepsilon$, under normal incidence, and various values of wavenumbers $k_\ell$.  \label{comp7aaa}}
\end{center}
  \end{table}
 
  \emph{Many layers.} We investigate next the iterative behavior of the QO DD solvers and the effectiveness of the sweeping preconditioners in the case of configurations that involve large numbers of layers. In the case when the roughness of the interfaces is small enough (i.e. the height parameter $\varepsilon=0.02$), we see in Table~\ref{comp8aa} that the sweeping preconditioner applied to the QO DD matrices $\mathcal{A}_n$ appears to be scalable, that is the numbers of GMRES iterations required for convergence does not depend on the number of layers or on the frequencies in each layer. We note that the transmission problems considered in Table~\ref{comp8aa} (as well as in Table~\ref{comp8a}-Table~\ref{comp8ab}) range from 100 to 4,000 wavelengths---as measured by the number of wavelengths across the period of each interface; the discretization size ranges from 5,000 to 15,000 unknowns. As the roughness parameter is increased (that is $\varepsilon=0.1$), we see in Table~\ref{comp8a} that the numbers of GMRES iterations do still scale when the sweeping preconditioner is applied to the DD matrices $\mathcal{A}_n^s$, but not in its counterpart case involving the DD matrices $\mathcal{A}_n$. Furthermore, QO DD solvers based on higher-order transmission operators (that is values of the parameter $L\geq 1$ in the definition of the transmission operators $Z_{j,j-1}^L$,$Z_{j,j+1}^L$, and respectively $Z_{j,j-1}^{s,L}$ and $Z_{j,j+1}^{s,L}$) perform only marginally better than those based on first-order transmission operators (that is $L=0$ in the definition of the aforementioned transmission operators) in the case of small roughness parameters $\varepsilon$. Based on our numerical experience, we observed that the iterative behavior of the QO DD solvers and the sweeping preconditioners depicted in Table~\ref{comp8aa} and Table~\ref{comp8a} is not sensitive to the width $H$ of the layers or the shape of the grating profiles. Furthermore, qualitatively similar behavior was observed in the cases when the interfaces $\Gamma_\ell$ are Lipschitz.
  
   \begin{table}
   \begin{center}
     \resizebox{!}{1.1cm}
{   
\begin{tabular}{|c|c|c|c|c|c|c|c|c|c|c|c|c|}
  \hline
  $N$ & \multicolumn{4}{c|} {$k_\ell=\ell+1.3, 0\leq \ell\leq N$}& \multicolumn{4}{c|} {$k_\ell=2\ell+1.3,0\leq\ell\leq N$}& \multicolumn{4}{c|} {$k_\ell=4\ell+1.3, 0\leq\ell\leq N$}\\
\cline{1-13}
& $\mathcal{A}_n,L=0$ & $\mathcal{B}_n^{-1}\mathcal{A}_n,L=0$ & $\mathcal{A}_n,L=2$ & $\mathcal{B}_n^{-1}\mathcal{A}_n,L=2$ & $\mathcal{A}_n,L=0$ & $\mathcal{B}_n^{-1}\mathcal{A}_n,L=0$ & $\mathcal{A}_n,L=2$ & $\mathcal{B}_n^{-1}\mathcal{A}_n,L=2$& $\mathcal{A}_n,L=0$ & $\mathcal{B}_n^{-1}\mathcal{A}_n,L=0$ & $\mathcal{A}_n,L=2$ & $\mathcal{B}_n^{-1}\mathcal{A}_n,L=2$ \\
\hline
9 & 58 & 13 & 58 & 13 & 62 & 13 & 62 & 13 & 61 & 12 & 61 & 12\\
\hline
19 & 115 & 14 & 115 & 14 & 119 & 14 & 119 & 14 & 118 & 14 & 118 & 14\\
\hline
29 & 164 & 14 & 164 & 14 & 171 & 14 & 171  & 14 & 168 & 14 & 168 & 14\\
\hline
\hline
9 & 60 & 13 & 60 & 13 & 58 & 14 & 58 & 14 & 61 & 14 & 61 & 14\\
\hline
19 & 119 & 14 & 115 & 14 & 119 & 16 & 117 & 14 & 117 & 14 & 150 & 14\\
\hline
29 & 167 & 14 &  161 & 14 & 166 & 15 & 164 & 14 & 183 & 14 & 167 & 14\\
\hline
\end{tabular}
}
\caption{Numbers of GMRES iterations required by preconditioned/unpreconditioned QO DD formulations to reach relative residuals of $10^{-4}$ for configurations consisting of $N+2$ layers for various values of $N$, where the interfaces $\Gamma_\ell,0\leq \ell\leq N$ are given by grating profiles $F_\ell(x_1)=-\ell H+2.5\varepsilon\cos{x_1}, H=3.3, 0\leq \ell\leq N$ with $\varepsilon=0.02$ (top panel) and $F_\ell(x_1)=-\ell H+2.5\pi\varepsilon(0.4\cos(x_1)-0.2\cos(2x_1)+0.4\cos(3x_1)), H=3.3, 0\leq \ell\leq N$ with $\varepsilon=0.02$ (bottom panel), under normal incidence, and various values of wavenumbers $k_\ell$. \label{comp8aa}}
\end{center}
  \end{table}

   \begin{table}
   \begin{center}
     \resizebox{!}{1.1cm}
{   
\begin{tabular}{|c|c|c|c|c|c|c|c|c|c|c|c|c|}
  \hline
  $N$ & \multicolumn{4}{c|} {$k_\ell=\ell+1.3, 0\leq \ell\leq N$}& \multicolumn{4}{c|} {$k_\ell=2\ell+1.3,0\leq\ell\leq N$}& \multicolumn{4}{c|} {$k_\ell=4\ell+1.3, 0\leq\ell\leq N$}\\
\cline{1-13}
& $\mathcal{A}_n,L=2$ & $\mathcal{B}_n^{-1}\mathcal{A}_n,L=2$ & $\mathcal{A}_n^s,L=2$ & $(\mathcal{B}_n^s)^{-1}\mathcal{A}_n^s,L=2$ & $\mathcal{A}_n,L=2$ & $\mathcal{B}_n^{-1}\mathcal{A}_n,L=2$ & $\mathcal{A}_n^s,L=2$ & $(\mathcal{B}_n^s)^{-1}\mathcal{A}_n^s,L=2$& $\mathcal{A}_n,L=2$ & $\mathcal{B}_n^{-1}\mathcal{A}_n,L=2$ & $\mathcal{A}_n^s,L=2$ & $(\mathcal{B}_n^s)^{-1}\mathcal{A}_n^s,L=2$ \\
\hline
9 & 60 & 18 & 60 & 15 & 58 & 17 & 58 & 14 & 57 & 15 & 57 & 14\\
\hline
19 & 109 & 20 & 109 & 18 & 110 & 20 & 110 & 15 & 104 & 19 & 104 & 14\\
\hline
29 & 156 & 22 & 156 & 18 & 161 & 25 & 159 & 16 & 157 & 24 & 154 & 14\\
\hline
\hline
9 & 64 & 18 & 61 & 14 & 59 & 19 & 59 & 15 & 60 & 20 & 55 & 14\\
\hline
19 & 117 & 26 & 111 & 16 & 119 & 30 & 108 & 16 & 125 & 33 & 107 & 14\\
\hline
29 & 170 & 36 & 156 & 16 & 179 & 48 & 155 & 15 & 188 & 54 & 152 & 14\\
\hline
\end{tabular}
}
\caption{Numbers of GMRES iterations required by various preconditioned/unpreconditioned QO DD formulations to reach relative residuals of $10^{-4}$ for configurations consisting of $N+2$ layers for various values of $N$, where the interfaces $\Gamma_\ell,0\leq \ell\leq N$ are given by grating profiles $F_\ell(x_1)=-\ell H+2.5\varepsilon\cos{x_1}, H=3.3, 0\leq \ell\leq N$ with $\varepsilon=0.1$ (top panel) and $F_\ell(x_1)=-\ell H+2.5\pi\varepsilon(0.4\cos(x_1)-0.2\cos(2x_1)+0.4\cos(3x_1)), H=3.3, 0\leq \ell\leq N$ with $\varepsilon=0.1$ (bottom panel), under normal incidence, and various values of wavenumbers $k_\ell$. \label{comp8a}}
\end{center}
   \end{table}

   As the roughness of the gratings $\Gamma_\ell$ increases, the sweeping preconditioner $(\mathcal{B}_n^s)^{-1}\mathcal{A}_n^s$ is still effective, yet the number of iterations required by the preconditioned formulation $(\mathcal{B}_n^s)^{-1}\mathcal{A}_n^s$ grows mildly with the number of layers as well as with increased frequencies/contrasts---see Table~\ref{comp8aba}. Remarkably, there are important benefits in the reduction of GMRES iterations by incorporating higher-order transmission operators $Z^{s,2}_{j,j-1}$ and $Z^{s,2}_{j,j+1}$ over the first-order ones $Z^{s,0}_{j,j-1}$ and $Z^{s,0}_{j,j+1}$ in the preconditioned QO DD formulations. Also, the sweeping preconditioner is less effective for QO DD formulations based on transmission operators $Z^2_{j,j-1}$ and $Z^2_{j,j+1}$ (i.e. $\mathcal{B}_n^{-1}\mathcal{A}_n$) for rough interface profiles. This behavior can be attributed to ill-conditioning present in the recursions~\eqref{eq:recursiveYj} for rough profiles---see Remark~\ref{stab_rec}. We note that the recursions~\eqref{eq:recursive_1} that are the basis of the evaluation of the transmission operators $Z^{s,L}_{j,j-1}$ and $Z^{S,L}_{j,j+1}$ are stable.  Finally, it can be seen from the results presented in Table~\ref{comp8aba} that QO DD solvers based on the formulation $(\mathcal{B}_n^\flat)^{-1}\mathcal{A}_n^\flat$ (which, given that the depth of the layers is larger than the profile roughness, is applicable in the case presented in Table~\ref{comp8aba}) require small numbers of GMRES iterations for convergence, whose growth with respect to the number of layers or contrast is very mild.

      \begin{table}
   \begin{center}
     \resizebox{!}{1.0cm}
{   
\begin{tabular}{|c|c|c|c|c|c|c|c|c|c|c|c|c|}
  \hline
  $N$ & \multicolumn{4}{c|} {$k_\ell=\ell+1.3, 0\leq \ell\leq N$}& \multicolumn{4}{c|} {$k_\ell=2\ell+1.3,0\leq\ell\leq N$}& \multicolumn{4}{c|} {$k_\ell=4\ell+1.3, 0\leq\ell\leq N$}\\
\cline{1-13}
& $\mathcal{A}_n,L=0$ & $\mathcal{B}_n^{-1}\mathcal{A}_n,L=0,2$ & $(\mathcal{B}_n^s)^{-1}\mathcal{A}_n^s,L=0,2$ & $(\mathcal{B}_n^\flat)^{-1}\mathcal{A}_n^\flat$ & $\mathcal{A}_n,L=0$ & $\mathcal{B}_n^{-1}\mathcal{A}_n,L=0,2$ & $(\mathcal{B}_n^s)^{-1}\mathcal{A}_n^s,L=0,2$& $(\mathcal{B}_n^\flat)^{-1}\mathcal{A}_n^\flat$ & $\mathcal{A}_n,L=0$ & $\mathcal{B}_n^{-1}\mathcal{A}_n,L=0,2$ & $(\mathcal{B}_n^s)^{-1}\mathcal{A}_n^s,L=0,2$ & $(\mathcal{B}_n^\flat)^{-1}\mathcal{A}_n^\flat$ \\
\hline
9 & 88 & 44/41 & 44/25 & 32 & 94 & 53/48 & 53/25 & 29 & 100 & 61/55 & 61/25 & 26\\
\hline
19 & 199 & 101/98 & 100/34 & 36 & 224 & 147/134 & 145/39 & 30 & 247 & 156/133 & 123/47 & 28\\
\hline
29 & 324 & 152/144 & 149/40 & 41 & 359 & 273/260 & 261/52 & 30 & 387 & 270/215 & 254/63 & 30\\
\hline
\hline
9 & 121 & 58/57 & 57/28 & 23 & 129 & 82/66 & 84/30 & 28 & 141 & 71/67 & 79/34 & 30 \\
\hline
19 & 253 & 130/120 & 124/48 & 30 & 317 & 221/182 & 218/60 & 32 & 388 & 187/160 & 162/78 & 34 \\
\hline
29 & 390 & 206/177 & 190/68 & 32 & 502 & 296/233 & 251/75 & 34 & 862 & 389/354 & 375/86 & 37 \\
\hline
\end{tabular}
}
\caption{Numbers of GMRES iterations required by various preconditioned/unpreconditioned QO DD formulations to reach relative residuals of $10^{-4}$ for configurations consisting of $N+2$ layers for various values of $N$, where the interfaces $\Gamma_\ell,0\leq \ell\leq N$ are given by grating profiles $F_\ell(x_1)=-\ell H+2.5\varepsilon\cos{x_1}, H=3.3, 0\leq \ell\leq N$ with $\varepsilon=0.5$ (top panel) and $F_\ell(x_1)=-\ell H+2.5\pi\varepsilon(0.4\cos(x_1)-0.2\cos(2x_1)+0.4\cos(3x_1)), H=3.3, 0\leq \ell\leq N$ with $\varepsilon=0.5$ (bottom panel), under normal incidence, and various values of wavenumbers $k_\ell$. \label{comp8aba}}
\end{center}
   \end{table}

   In the case of very rough gratings $\Gamma_\ell$ (whose height/period ratios are close to $1$), the sweeping preconditioners $(\mathcal{B}_n^s)^{-1}\mathcal{A}_n^s$ become less effective--see Table~\ref{comp8ab}. This can be attributed to the fact that the large values of the roughness parameter $\varepsilon$ are outside the radius of convergence of the perturbation series approximations of DtN operators. Nevertheless, the use of higher-order transmission operators $Z^{s,2}_{j,j-1}$ and $Z^{s,2}_{j,j+1}$ is again beneficial. We mention that due the ratio between the profile roughness and the width of the layers, the DD formulation~\eqref{DDM_t_final} is not possible in this case: a strip domain decomposition would necessarily require that the flat subdomain interfaces intersect the gratings $\Gamma_\ell$. Nevertheless, once the layers width is large enough with respect to profile roughness so that the DD formulation~\eqref{DDM_t_final} is possible, the sweeping preconditioners $(\mathcal{B}_n^\flat)^{-1}\mathcal{A}_n^\flat$ are effective---see Table~\ref{comp8abc}.

\begin{table}
   \begin{center}
     \resizebox{!}{0.8cm}
{   
\begin{tabular}{|c|c|c|c|c|c|c|c|c|c|c|c|c|}
  \hline
  $N$ & \multicolumn{4}{c|} {$k_\ell=\ell+1.3, 0\leq \ell\leq N$}& \multicolumn{4}{c|} {$k_\ell=2\ell+1.3,0\leq\ell\leq N$}& \multicolumn{4}{c|} {$k_\ell=4\ell+1.3, 0\leq\ell\leq N$}\\
\cline{1-13}
& $\mathcal{A}_n^s,L=0$ & $(\mathcal{B}_n^s)^{-1}\mathcal{A}_n^s,L=0$ &  $\mathcal{A}_n^s,L=2$ & $(\mathcal{B}_n^s)^{-1}\mathcal{A}_n^s,L=2$ & $\mathcal{A}_n^s,L=0$ & $(\mathcal{B}_n^s)^{-1}\mathcal{A}_n^s,L=0$ &  $\mathcal{A}_n^s,L=2$ & $(\mathcal{B}_n^s)^{-1}\mathcal{A}_n^s,L=2$ & $\mathcal{A}_n^s,L=0$ & $(\mathcal{B}_n^s)^{-1}\mathcal{A}_n^s,L=0$ & $\mathcal{A}_n^s,L=2$ & $(\mathcal{B}_n^s)^{-1}\mathcal{A}_n^s,L=2$\\
\hline
9 & 195 & 137 & 83 & 57 & 260 & 191 & 92 & 66 & 310 & 280 & 135 & 87\\
\hline
19 & 522 & 312 & 171 & 109 & 696 & 524 & 253 & 204 & 1080 & 812 & 390 & 281\\
\hline
\hline
9 & 266 & 164 & 103 & 76 & 390 & 254 & 121 & 84 & 481 & 416 & 166 & 125\\
\hline
19 & 736 & 392 & 256 & 187 & 1145 & 658 & 354 & 247 & 1801 & 1223 & 461 & 312\\
\hline

\end{tabular}
}
\caption{Numbers of GMRES iterations required by various preconditioned/unpreconditioned QO DD formulations to reach relative residuals of $10^{-4}$ for configurations consisting of $N+2$ layers for various values of $N$, where the interfaces $\Gamma_\ell,0\leq \ell\leq N$ are given by grating profiles $F_\ell(x_1)=-\ell H+2.5\varepsilon\cos{x_1}, H=3.3, 0\leq \ell\leq N$ with $\varepsilon=1$ (top panel) and $F_\ell(x_1)=-\ell H+2.5\pi\varepsilon(0.4\cos(x_1)-0.2\cos(2x_1)+0.4\cos(3x_1)), H=3.3, 0\leq \ell\leq N$ with $\varepsilon=1$ (bottom panel), under normal incidence, and various values of wavenumbers $k_\ell$.\label{comp8ab}}
\end{center}
\end{table}

   \begin{table}
   \begin{center}
     \resizebox{!}{1.0cm}
{   
\begin{tabular}{|c|c|c|c|c|c|c|}
  \hline
  $N$ & \multicolumn{2}{c|} {$k_\ell=\ell+1.3, 0\leq \ell\leq N$}& \multicolumn{2}{c|} {$k_\ell=2\ell+1.3,0\leq\ell\leq N$}& \multicolumn{2}{c|} {$k_\ell=4\ell+1.3, 0\leq\ell\leq N$}\\
\cline{1-7}
& $(\mathcal{B}_n^s)^{-1}\mathcal{A}_n^s,L=2$ & $(\mathcal{B}_n^\flat)^{-1}\mathcal{A}_n^\flat$ & $)\mathcal{B}_n^s)^{-1}\mathcal{A}_n^s,L=2$ & $(\mathcal{B}_n^\flat)^{-1}\mathcal{A}_n^\flat$ &$(\mathcal{B}_n^s)^{-1}\mathcal{A}_n^s,L=2$ & $(\mathcal{B}_n^\flat)^{-1}\mathcal{A}_n^\flat$ \\
\hline
9 & 50 & 27 & 66 & 26 & 110 & 26\\
\hline
19 & 90 & 27 & 158 & 28 & 189 & 29\\
\hline
\hline
9 & 73 & 26 & 98 & 28 & 150 & 29\\
\hline
19 & 139 & 31 & 183 & 34 & 247 & 35\\
\hline
\end{tabular}
}
\caption{Numbers of GMRES iterations required by various preconditioned QO DD formulations to reach relative residuals of $10^{-4}$ for configurations consisting of $N+2$ layers for various values of $N$, where the interfaces $\Gamma_\ell,0\leq \ell\leq N$ are given by grating profiles $F_\ell(x_1)=-\ell H+2.5\cos{x_1}, H=5.6, 0\leq \ell\leq N$ (top panel) and $F_\ell(x_1)=-\ell H+2.5\pi(0.4\cos(x_1)-0.2\cos(2x_1)+0.4\cos(3x_1)), H=4.5, 0\leq \ell\leq N$ (bottom panel), under normal incidence, and various values of wavenumbers $k_\ell$.\label{comp8abc}}
\end{center}
   \end{table}

   According to the results presented in Tables~\ref{comp8aa}--\ref{comp8abc}, the sweeping preconditioners $\mathcal{B}_n^{-1}\mathcal{A}_n$ and especially $(\mathcal{B}_n^s)^{-1}\mathcal{A}_n^s$ can effectively reduce the numbers of GMRES iterations required for the solution of QO DD algorithms for periodic transmission problems involving large number of layers, even in the case when the roughness of the interfaces of material discontinuity is pronounced. We have observed that these findings are virtually independent of the layer material properties (for instance, the numbers of GMRES iterations reported in these tables are about the same when we considered random wavenumbers in the same range) or the depth of the layers (as long as the original transmission problem is well posed). In addition, the sweeping preconditioners $(\mathcal{B}_n^\flat)^{-1}\mathcal{A}_n^\flat$, whenever applicable, are extremely efficient, even for very rough profiles $\Gamma_\ell$. As we have presented in Tables~\ref{comp8aa}--\ref{comp8abc}, the choice of the transmission operators plays an important role in the convergence properties of the ensuing DD algorithms. Besides the square root Fourier multiplier transmission operators presented in this paper, other transmission operators have been used in the DD arena. Notably, me mention the classical Robin transmission operators $Z=i\ Id$ (the first transmission operators introduced for DD formulations of Helmholtz equations by D\'espres ~\cite{Depres}), as well as transmission operators of the form $Z=i\mathcal{T}$ (related to the ones introduced in~\cite{lecouvez2014quasi}), where the operator $\mathcal{T}$ is related to the Hilbert transform
   \begin{equation}
     \mathcal{T}(\varphi)(t)=\partial_t\int_0^{2\pi}\mathcal{K}(t-\tau)\partial_{\tau}\varphi(\tau)d\tau+\varphi(t),\quad \mathcal{K}(t):=\frac{1}{\pi}\ln{|1-e^{it}|},\quad 0\leq t\leq 2\pi,
   \end{equation}
   where $\varphi$ is a $2\pi$ periodic function. We note that these two choices of transmission operators give rise to isometric RtR maps, and thus they lead to DD formulations that are well-posed as long as the initial transmission problem~\eqref{system_t} is. We illustrate in Figure~\ref{fig:iters} the numbers of iterations required by DD formulations that rely on the two above mentioned transmission operators. Specifically, we considered profiles defined by $F_\ell(x_1)=-\ell H+2.5\varepsilon\cos{x_1}, H=3.3, 0\leq \ell\leq N$ with $\varepsilon=0.1$ and we report numbers of GMRES iterations required by the DD with the transmission operators defined above to reach relative residuals of $10^{-4}$.  Comparing the results in Figure~\ref{fig:iters} with their counterparts in Table~\ref{comp8a}, we see that the use of DD with QO transmission operators $Z^{s,L}_{j,j-1}$ and $Z^L_{j,j+1}$ in conjunction with sweeping preconditioners can give rise to order of magnitude reductions in numbers of GMRES iterations. We mention that the sweeping preconditioner is ineffective in the cases presented in Figure~\ref{fig:iters}. This finding is not surprising, given that the premise of sweeping preconditioners is that the transmission operators are good approximations of subdomain DtN maps. Finally, similar scenarios occur for rougher profiles. 
   \begin{figure}
\centering
\includegraphics[height=80mm]{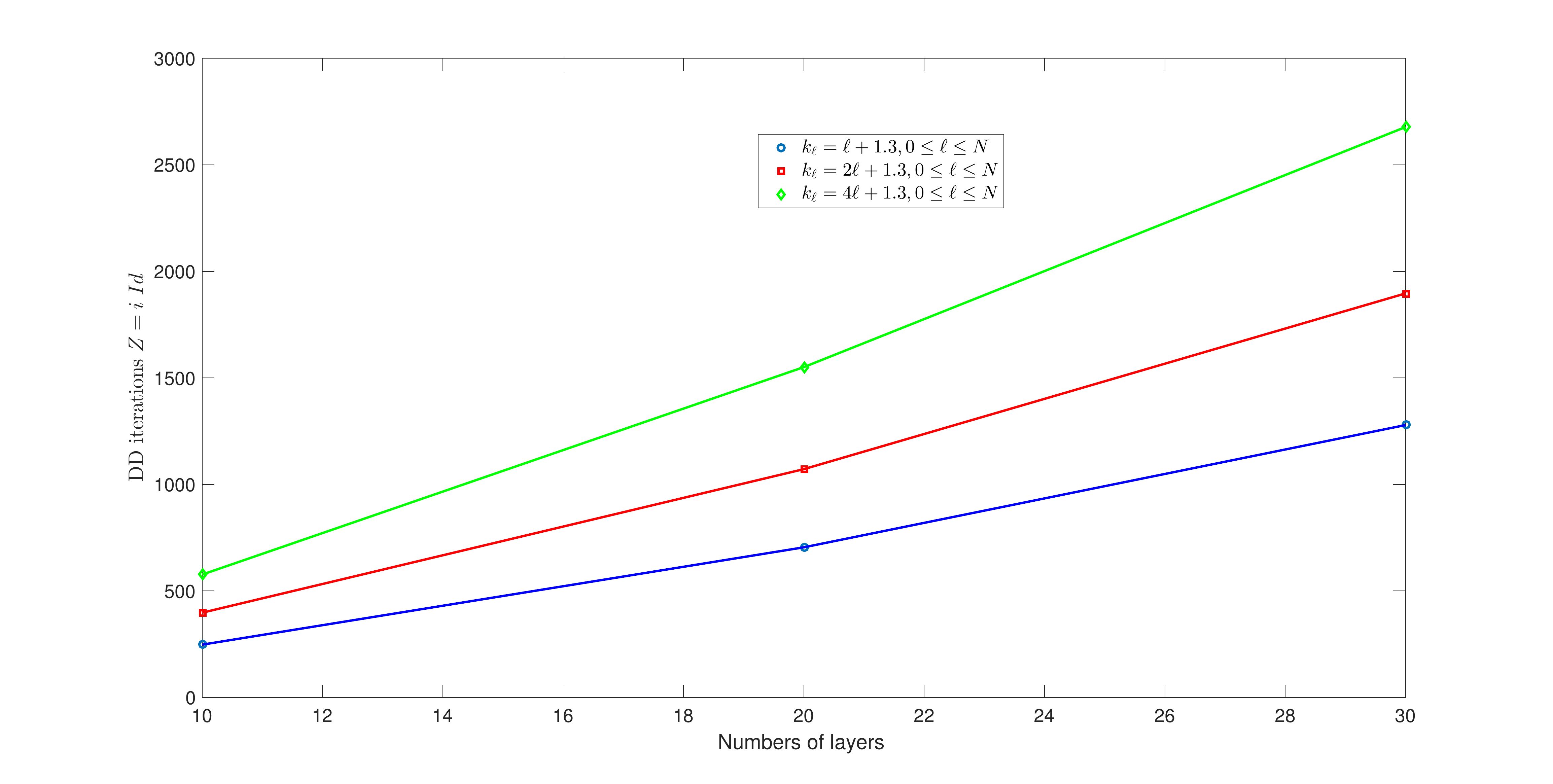}\\
\includegraphics[height=80mm]{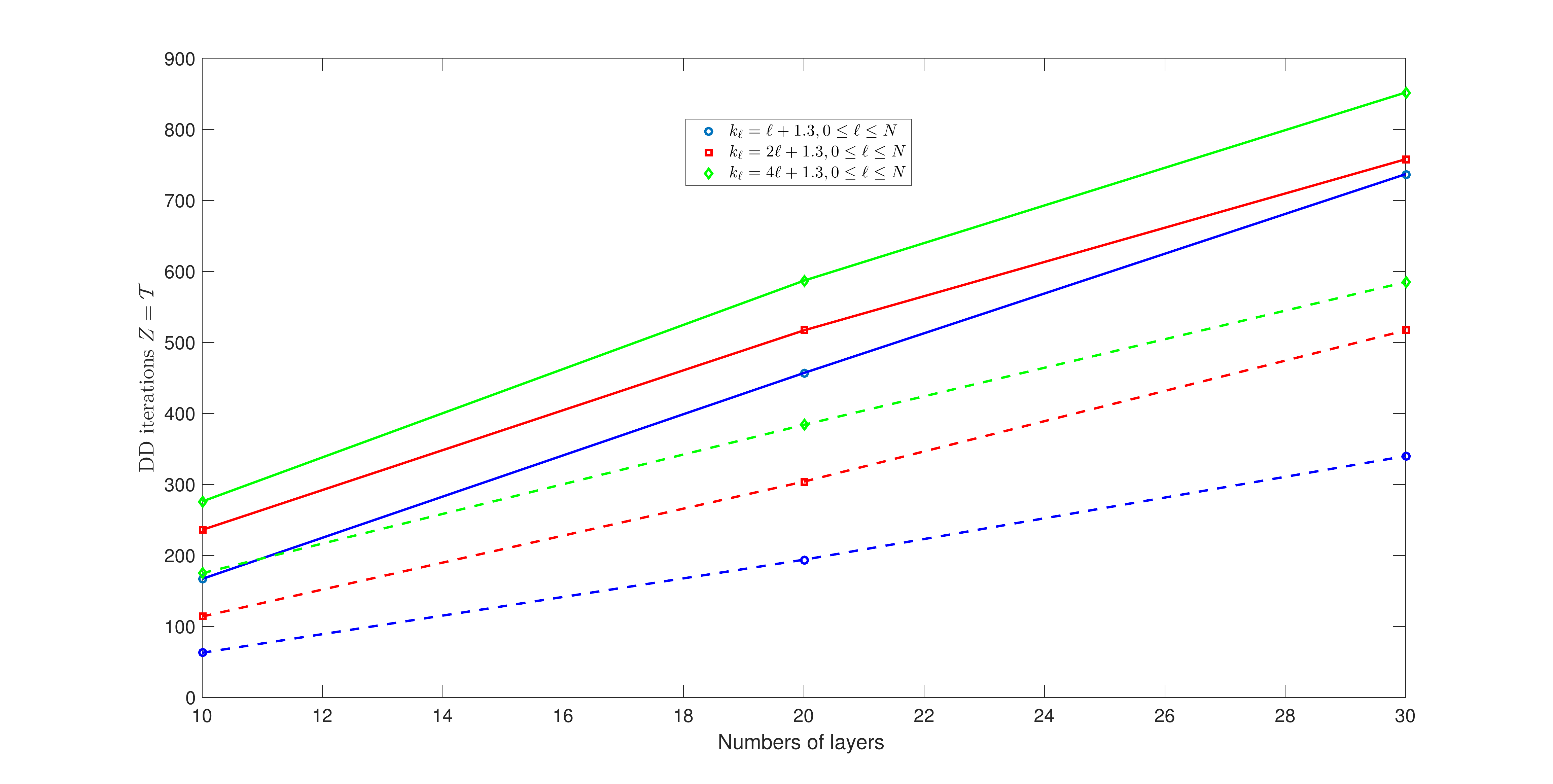}
\caption{Numbers of GMRES iterations required to reach relative residuals of $10^{-4}$ by DD algorithms based on transmission operators $Z=i I$ (top) and $Z=i\mathcal{T}$ (bottom) in the case when the profiles are given by the gratings $F_\ell(x_1)=-\ell H+2.5\varepsilon\cos{x_1}, H=3.3, 0\leq \ell\leq N$ with $\varepsilon=0.1$. In the case of transmission operators $Z=i\mathcal{T}$ we plot with dashed lines the numbers of iterations required after the sweeping preconditioner is applied.}
\label{fig:iters}
   \end{figure}
   
Further insight on the superior performance of the DD algorithms based on QO transmission operators $Z^L_{j,j+1}$ can be garnered from the eigenvalue distribution depicted in Figure~\ref{fig:eig}. As it can be seen, the eigenvalues corresponding to the DD matrices $\mathcal{A}_n,L=0$ are clustered around one, and the clustering is even more pronounced for the eigenvalues of the preconditioned matrix  $\mathcal{B}_n^{-1}\mathcal{A}_n,L=0$. In contrast, the distribution of the eigenvalues of the DD matrix corresponding to classical Robin transmission operators $Z=i\ I$ is not conducive to fast convergence of GMRES solvers.  
    \begin{figure}
\centering
\includegraphics[height=50mm]{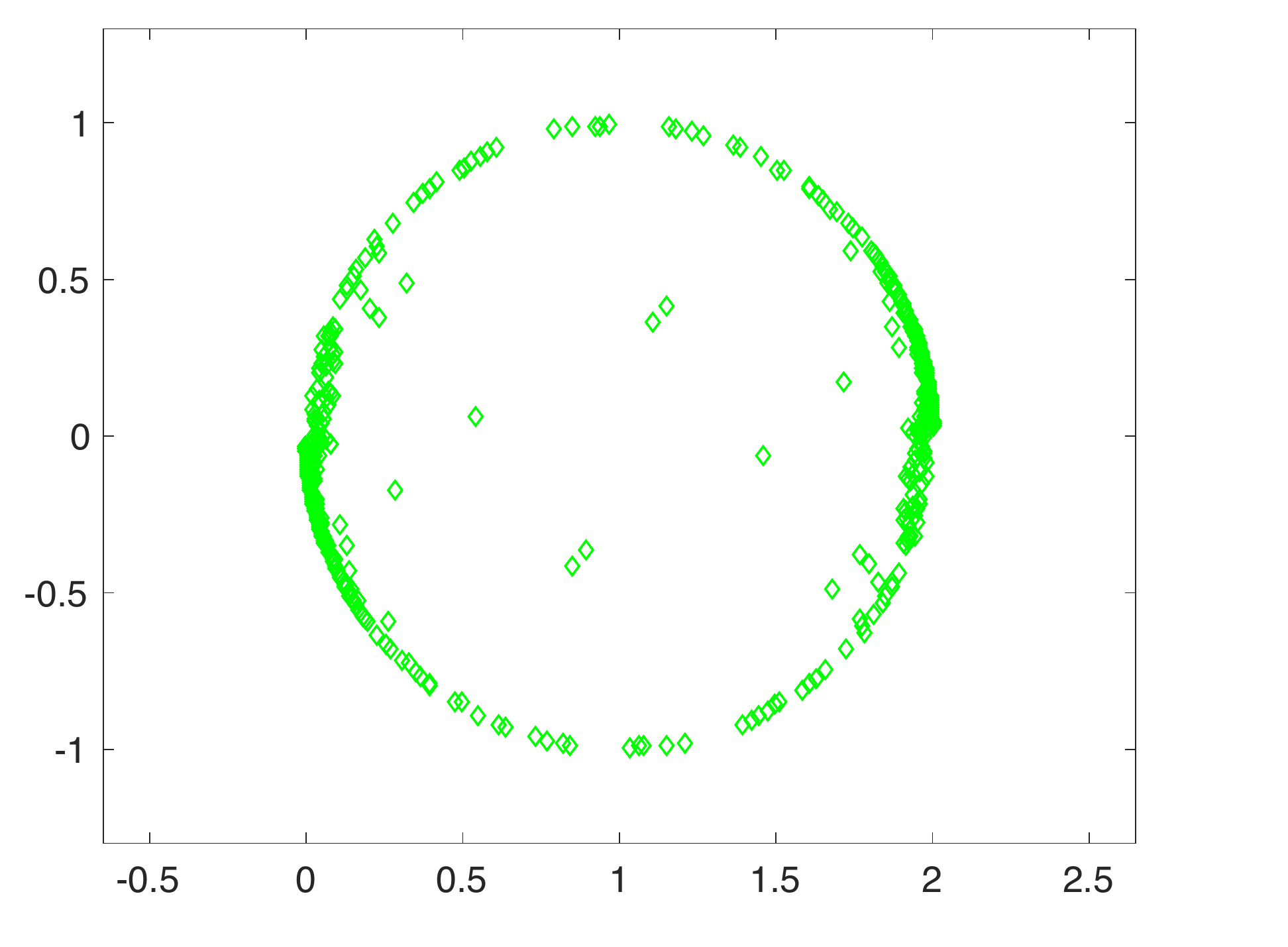}\includegraphics[height=50mm]{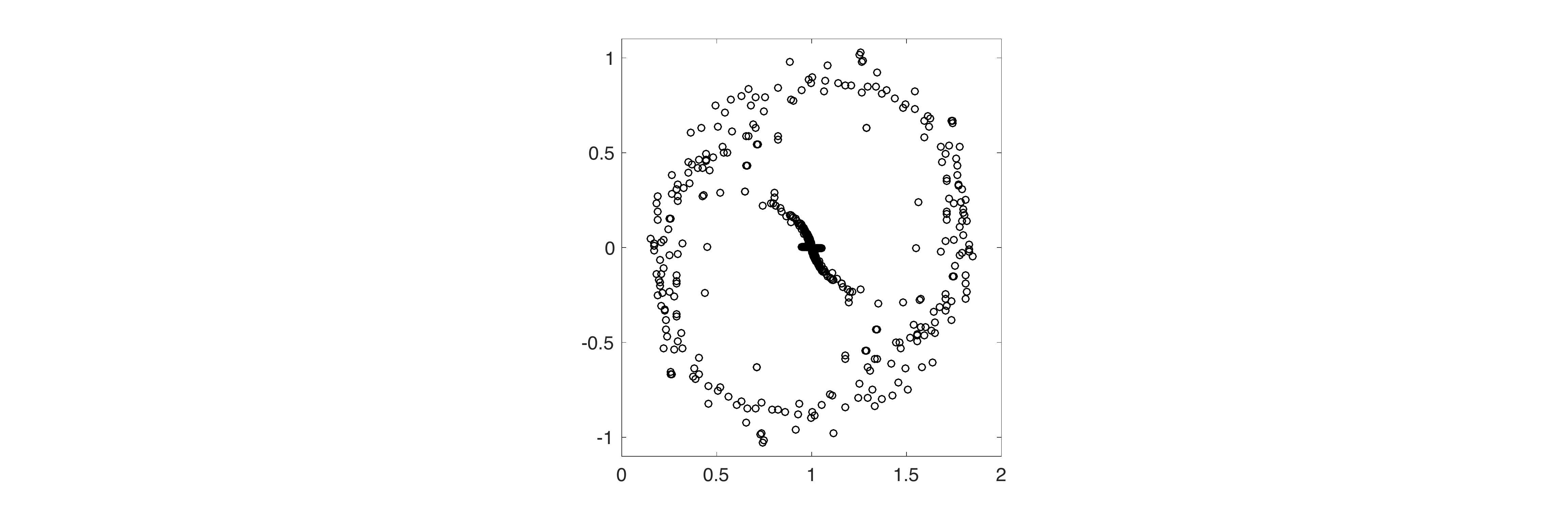}\includegraphics[height=50mm]{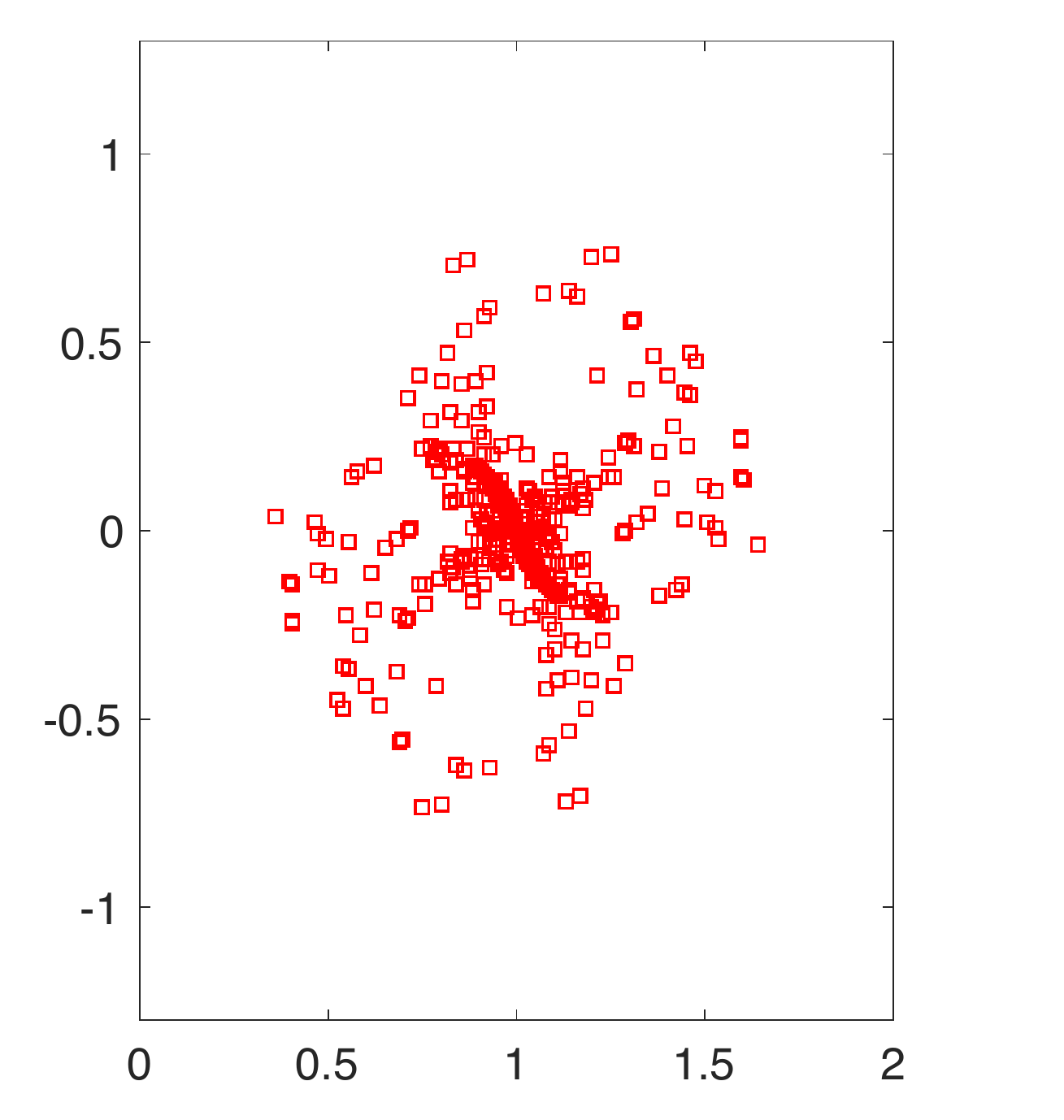}
\caption{Eigenvalue distribution of QO DD formulations for $N=9$, $k_\ell=\ell+1.3, 0\leq \ell\leq N$, $F_\ell(x_1)=-\ell H+2.5\varepsilon\cos{x_1}, H=3.3, 0\leq \ell\leq N$ with $\varepsilon=0.1$: $Z=i\ I$ (left), $\mathcal{A}_n,L=0$ (center), and $\mathcal{B}_n^{-1}\mathcal{A}_n,L=0$ (right).}
\label{fig:eig}
   \end{figure}

   \emph{Inclusions in periodic layers.} Finally, we present results concerning perfectly conducting inclusions embedded in layered media, see Figure~\ref{fig:layers_inclusions}. We present numerical experiments related to such configurations in Table~\ref{comp8aW}. In order to showcase the versatility of our DD algorithm, we chose wavenumbers that are Wood frequencies in the layers that contain inclusions. We note that for these configurations the transmission operators that we use are approximations of DtN operators corresponding to homogeneous layers, and thus the presence of inclusions was not accounted in the construction of transmission operators. Nevertheless, we found that the sweeping preconditioner is still effective, yet the presence of multiple inclusions deteriorates somewhat its performance especially in the high-contrast media cases.  

   \begin{figure}
\centering
\includegraphics[height=60mm]{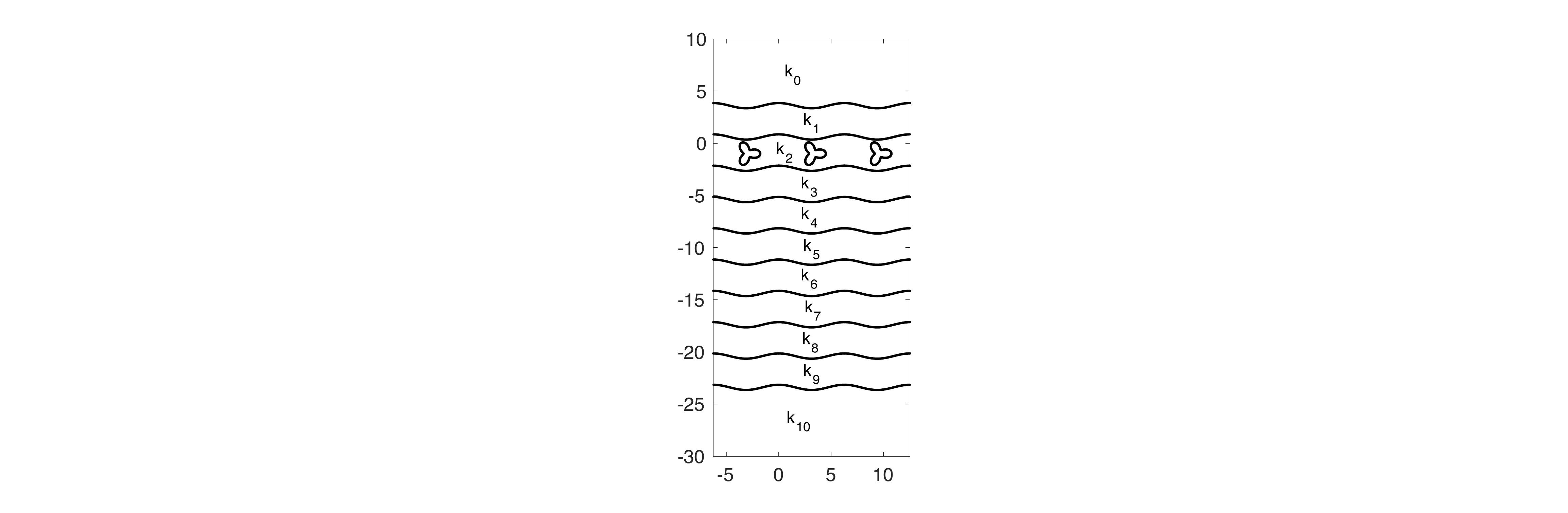}\includegraphics[height=60mm]{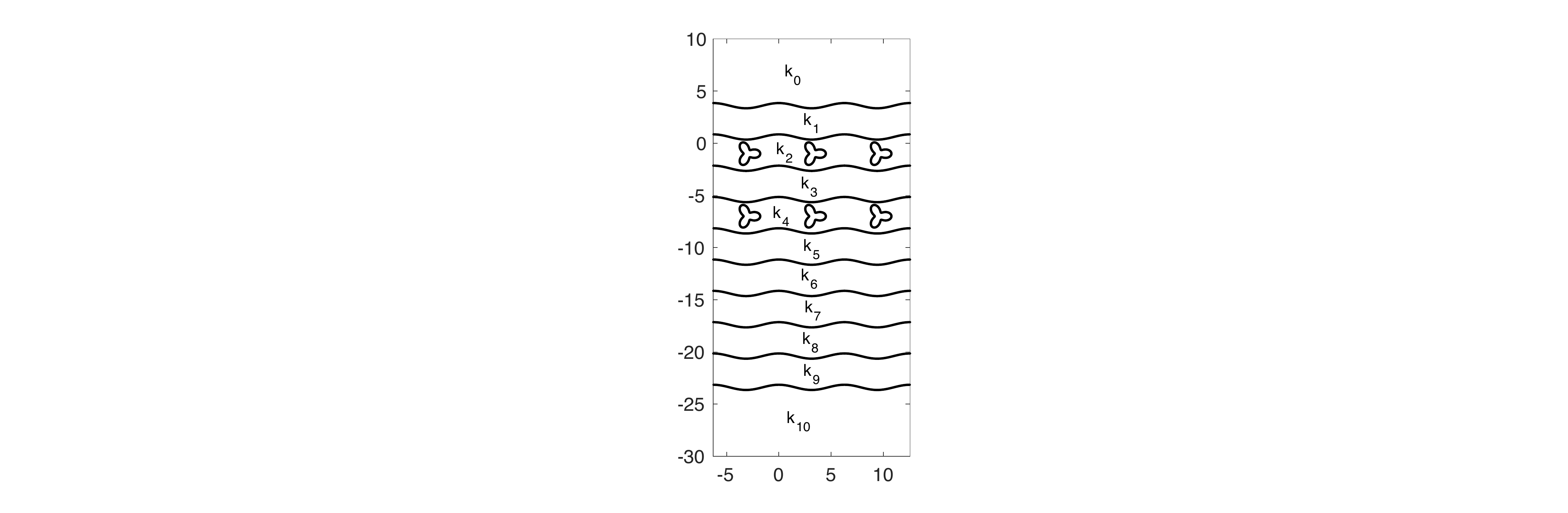}\includegraphics[height=60mm]{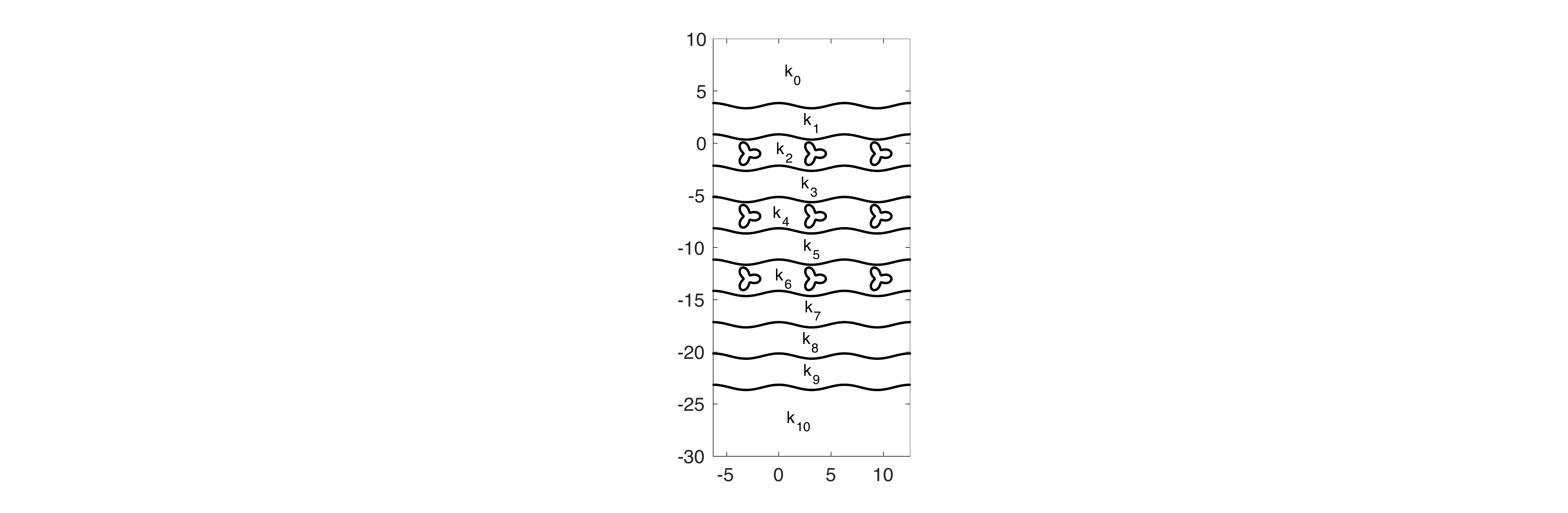}\includegraphics[height=60mm]{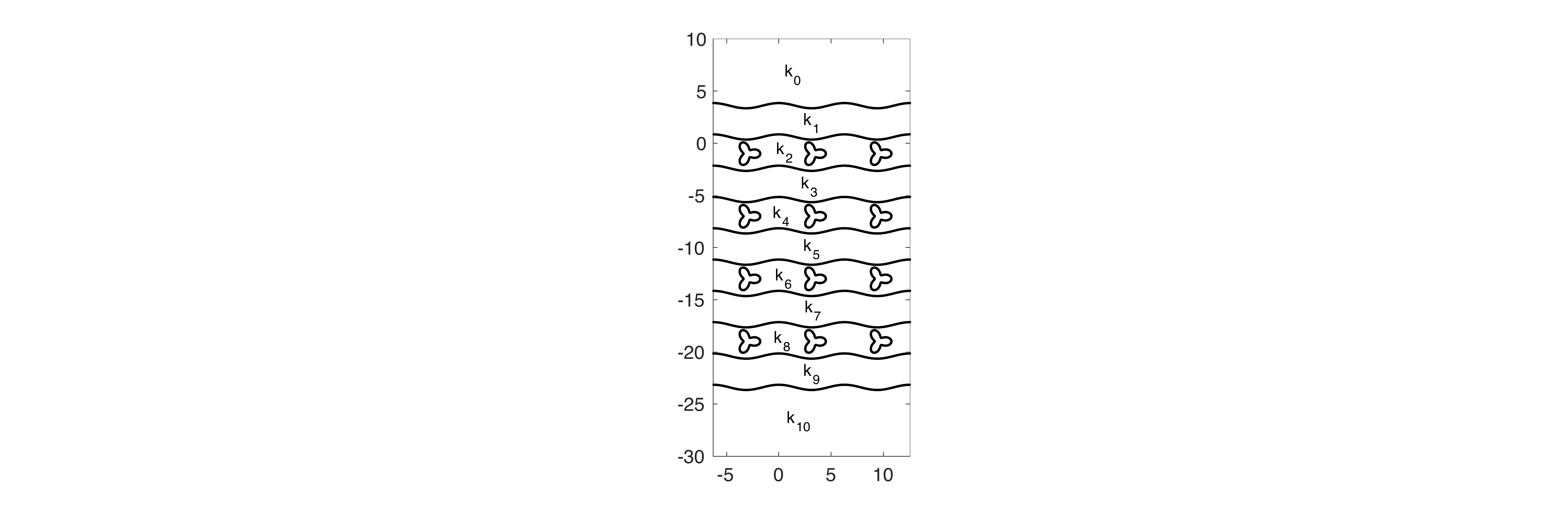}
\caption{Periodic layer configurations with inclusions.}
\label{fig:layers_inclusions}
   \end{figure}

 \begin{table}
   \begin{center}
     \resizebox{!}{1.3cm}
{   
\begin{tabular}{|c|c|c|c|c|c|c|c|c|}
  \hline
  $k_\ell$ & \multicolumn{2}{c|} {One layer inclusions}& \multicolumn{2}{c|} {Two layer inclusions}& \multicolumn{2}{c|} {Three layer inclusions}& \multicolumn{2}{c|} {Four layer inclusions}\\
\cline{1-9}
& $\mathcal{A}_n,L=0$ & $\mathcal{B}_n^{-1}\mathcal{A}_n,L=0$ & $\mathcal{A}_n,L=0$ & $\mathcal{B}_n^{-1}\mathcal{A}_n,L=0$ & $\mathcal{A}_n,L=0$ & $\mathcal{B}_n^{-1}\mathcal{A}_n,L=0$ & $\mathcal{A}_n,L=0$ & $\mathcal{B}_n^{-1}\mathcal{A}_n,L=0$ \\
\hline
$k_\ell=\ell+1.3,\ell\notin I$ & 62 & 17 & 71 & 23 & 81 & 30 & 111& 40 \\
$k_\ell=\ell,\ell\in I$ & & & & &  & & &  \\
\hline
$k_\ell=2\ell+1.3,\ell\notin I$ & 60 & 17 & 69 & 24 & 85 & 31 & 132 & 46 \\
$k_\ell=2\ell,\ell\in I$ & & & & &  & & &  \\
\hline
$k_\ell=4\ell+1.3,\ell\notin I$ & 57 & 18 & 76 & 28 & 89 & 36 & 141 &  49\\
$k_\ell=4\ell,\ell\in I$ & & & & &  & & &  \\
\hline
\end{tabular}
}
\caption{Numbers of GMRES iterations required by various QO DD formulations to reach relative residuals of $10^{-4}$ for configurations consisting of $11$ layers with perfectly conducting inclusions depicted in Figure~\ref{fig:layers_inclusions}, where the interfaces $\Gamma_\ell,0\leq \ell\leq 9$ are given by grating profiles $F_\ell(x_1)=-\ell H+2.5\varepsilon\cos{x_1}, H=3, 0\leq \ell\leq 9$ with $\varepsilon=0.1$, under normal incidence, and various values of wavenumbers $k_\ell$. The wavenumbers corresponding to layers with inclusions were selected to be Wood anomalies. Parameters $A=300$, $M=256$ were selected so that to lead to conservation of energy errors of the order $10^{-4}$. \label{comp8aW}}
\end{center}
   \end{table}   

\section{Conclusions}
   
We presented a sweeping preconditioner for the QO DD formulation of Helmholtz transmission problems in two dimensional periodic layered media.  Our QO DD formulation is built upon transmission operators who's construction relies on high-order shape deformation expansions of periodic layer DtN operators.  We used robust boundary integral equation formulations to represent the RtR operators, which were discretized via high-order Nystr\"om discretizations. The sweeping preconditioners are particularly effective in the case when the subdomain partitions consist of horizontal layers, at least when the boundaries of the layers do not contain cross points.   Extensions to cases when cross points are present, and to three dimensional cases are underway. We are also exploring strategies to parallelize the sweeping preconditioners.

\section*{Acknowledgments}
 Catalin Turc gratefully acknowledges support from NSF through contract DMS-1614270.

\bibliographystyle{abbrv}
\bibliography{biblioLayer}

\end{document}